\providecommand{\U}[1]{\protect\rule{.1in}{.1in}}
\renewcommand{\hat}{\widehat}
\renewcommand{\tilde}{\widetilde}
\renewcommand{\bar}{\overline}
\renewcommand{\epsilon}{\varepsilon}
\newcommand{\roof}{\mathcal{R}} 
\newcommand{\supr}{\mathcal{SU}} 
\newcommand{\stopp}{\mathcal{S}} 
\DeclareMathOperator{\arcsinh}{arcsinh}
\newtheorem{theorem}{Theorem}
\theoremstyle{plain}
\newtheorem{corollary}[theorem]{Corollary}
\newtheorem{definition}[theorem]{Definition}
\newtheorem{lemma}[theorem]{Lemma}
\newtheorem{proposition}[theorem]{Proposition}
\newtheorem{remark}[theorem]{Remark}
\newtheorem*{statement*}{Statement $\mathbf{U_t ( \{k_j\}_{j \geq 0}, \epsilon , \Gamma, p )}$}
\newtheorem*{statement2*}{Statement $\mathbf{V}_t ^{\mathcal{F}}$}
\numberwithin{equation}{section}
\begin{document}
\title[The T1 theorem for pairs of doubling measures fails when p not 2]{The scalar $T1$ theorem for pairs of doubling measures fails for Riesz transforms when $p$ not $2$}
\author[M. Alexis]{Michel Alexis}
\address{Department of Mathematics \& Statistics, McMaster University, 1280 Main Street
West, Hamilton, Ontario, Canada L8S 4K1}
\email{micalexis.math@gmail.com}
\author[J. L. Luna-Garcia]{Jos\'e Luis Luna-Garcia}
\address{Department of Mathematics \& Statistics, McMaster University, 1280 Main Street
West, Hamilton, Ontario, Canada L8S 4K1}
\email{lunagaj@mcmaster.ca}
\author[E.T. Sawyer]{Eric T. Sawyer}
\address{Department of Mathematics \& Statistics, McMaster University, 1280 Main Street
West, Hamilton, Ontario, Canada L8S 4K1 }
\email{Sawyer@mcmaster.ca}
\author[I. Uriarte-Tuero]{Ignacio Uriarte-Tuero}
\address{Department of Mathematics, University of Toronto\\
Room 6290, 40 St. George Street, Toronto, Ontario, Canada M5S 2E4\\
(Adjunct appointment)\\
Department of Mathematics\\
619 Red Cedar Rd., room C212\\
Michigan State University\\
East Lansing, MI 48824 USA}
\email{ignacio.uriartetuero@utoronto.ca}
\thanks{E. Sawyer is partially supported by a grant from the National Research Council
of Canada}
\thanks{I. Uriarte-Tuero has been partially supported by grant MTM2015-65792-P
(MINECO, Spain), and is partially supported by a grant from the National
Research Council of Canada}
\thanks{The authors thank Max Gie\ss ler noticing an error in the exponent of Lemma \ref{reduction_extra} in the previous version of this paper.}

\begin{abstract}

We show that for an individual Riesz transform in the setting of doubling measures, the \emph{scalar} $T1$ theorem fails when $p \neq 2$: for each $ p \in (1, \infty) \setminus \{2\}$, we construct a pair of doubling measures $(\sigma, \omega)$ on $\mathbb{R}^2$ with doubling constant close to that of Lebesgue measure that also satisfy the scalar $\mathcal{A}_p$ condition and the full scalar $L^p$-testing conditions for an individual Riesz transform $R_j$, and yet $\left ( R_j \right )_{\sigma} : L^p (\sigma) \not \to L^p (\omega)$.

 On the other hand, we improve upon the \emph{quadratic}, or vector-valued, $T1$ theorem of \cite{SaWi} when $p \neq 2$ on pairs of doubling measures: we dispense with their vector-valued weak boundedness property to show that for pairs of doubling measures, the two-weight $L^p$ norm inequality for the vector Riesz transform is characterized by a quadratic Muckenhoupt condition $A_{p} ^{\ell^2, \operatorname{local}}$, and a quadratic testing condition.

Finally, in the appendix, we use constructions of \cite{KaTr} to show that the two-weight norm inequality for the maximal function cannot be characterized solely by the $A_p$ condition when the measures are doubling, contrary to reports in the literature.

\end{abstract}
\maketitle

\tableofcontents

\section{Introduction}
The $T1$ conjecture of Nazarov, Treil, and Volberg on the boundedness
of the Hilbert transform $H$ between weighted spaces $L^{2}\left(  \sigma\right)
$ and $L^{2}\left(  \omega\right)  $ was proved in the
two part paper \cite{LaSaShUr3,Lac}, with the assumption on common
point masses subsequently removed in \cite{Hyt2}. In the case that $\sigma$ and $\omega$ are doubling measures,  this result was extended to vector Riesz transforms in \cite{LaWi}, and  partial progress towards a $T1$ theorem for Calder\'on-Zygmund operators was made in \cite{SaShUr7,AlSaUr}. The natural conjecture for the $L^p$ analog is as follows: given a Calder\'on-Zygmund operator $T$ with kernel $K$ and doubling measures $\sigma$ and $\omega$, one has
\begin{equation}\label{eq:naive_T1}
	\mathfrak{N}_{T, p } (\sigma, \omega) \lesssim \mathfrak{T}_{T, p } ^{\operatorname{local}} (\sigma, \omega) + \mathfrak{T}_{T ^* ,p'} ^{\operatorname{local}} (\omega , \sigma) + A_p(\sigma, \omega) \ ,
\end{equation}
where the scalar Muckenhoupt $A_p$ characteristic is given by
\[
	A_p ( \sigma, \omega) \equiv \sup\limits_{I} \left ( \frac{1}{|I|} \int\limits_I \sigma  \right )^{\frac{1}{p'}} \left ( \frac{1}{|I|} \int\limits_I \omega \right ) ^{\frac{1}{p}} \, ,
\]
the scalar local testing constant is given by
\[
	\left ( \mathfrak{T}_{T,p} ^{\operatorname{local}} (\sigma, \omega)  \right )^p \equiv \sup\limits_{I} \frac{1}{|I|_{\sigma}} \int\limits_I | T_{\sigma} \mathbf{1}_I (x)|^p d \omega(x) \, ,
 \]
 and $\mathfrak{N}_{T, p } (\sigma, \omega)$ denotes the $L^p (\sigma) \to L^p (\omega)$ norm of the operator 
 \[
   T_{\sigma} f (x) \equiv T (f \sigma) (x) = \int K(x,y) f(y) d \sigma (y) \, .
 \]
See Section \ref{subsection:norm} for how we make sense of $T_{\sigma}$, the norm inequality and the testing characteristics. 

When $T$ is the vector Riesz transform $\mathbf{R}$, \cite{SaWi} proved a $T1$ theorem on $L^p$ by replacing the scalar conditions above by their stronger \emph{quadratic}, or vector-valued, analogs introduced by Hyt\" onen and Vuorinen.
The quadratic $A_{p} ^{\ell^2, \operatorname{local}}$ condition, a variant of one introduced by \cite{HyVu}, is given by the existence of a finite constant $C$ for which
\begin{align}\label{forward_Ap_local}
  \left \| \left ( \sum\limits_{J} a_J ^2  \left (\frac{\left |J \right |_{\sigma}}{\left | J \right |} \right )^2 \mathbf{1}_{J} \right )^{\frac{1}{2}} \right \|_{L^p (\omega)} \leq C  \left \|\left ( \sum\limits_{J}  a_J ^2 \mathbf{1}_{J} \right )^{\frac{1}{2}} \right \|_{L^p (\sigma)} ,
\end{align}
where \eqref{forward_Ap_local} holds over all countable collections of cubes $\{J\}$ and sequences $\{a_J\}$ indexed by the sequence of cubes. The best constant $C$ in \eqref{forward_Ap_local} is denoted by $A_{p} ^{\ell^2, \operatorname{local}} (\sigma, \omega)$. Similarly, the quadratic local testing condition $\mathfrak{T}_{T, p} ^{\ell^2, \operatorname{local}} (\sigma, \omega)$ is defined as
\begin{equation}\label{eq:quad_testing_local_0}
\left \| \left ( \sum\limits_{J} a_J ^2  \left (T_{\sigma} \mathbf{1}_J \right )^2 \mathbf{1}_{J} \right )^{\frac{1}{2}} \right \|_{L^p (\omega)} \leq C  \left \|\left ( \sum\limits_{J}  a_J ^2 \mathbf{1}_{J} \right )^{\frac{1}{2}} \right \|_{L^p (\sigma)} \, ,
\end{equation}
where again the best constant $C$ above is denoted by $\mathfrak{T}_{T,p} ^{\ell^2, \operatorname{local}} (\sigma, \omega)$. In particular, \cite[Theorem 1, part (2)]{SaWi} required that the measure pair $(\sigma, \omega)$ satisfies the quadratic local testing conditions $\mathfrak{T}_{T, p} ^{\ell^2, \operatorname{local}} (\sigma, \omega)$ and $\mathfrak{T}_{T, p'} ^{\ell^2, \operatorname{local}} (\omega ,\sigma  )$ defined in \eqref{eq:quad_testing_local_0}, the quadratic Muckenhoupt condition $A_{p} ^{\ell^2, \operatorname{local}} (\sigma, \omega)$ defined in \eqref{forward_Ap_local}, and a
 quadratic weak-boundedness property defined in  \eqref{eq:wbp_quadratic}. 
Our first theorem below dispenses with the last condition, improving upon \cite[Theorem 1, part (2)]{SaWi}.
\begin{theorem}\label{thm:new_testing_thm_Lp_vector}
  Let $\sigma, \omega$ be doubling measures on $\mathbb{R}^n$ and $1<p < \infty$. Then
    \[
      \mathfrak{N}_{\mathbf{R},p} (\sigma, \omega) \lesssim \mathfrak{T}_{ \mathbf{R},p} ^{ \ell^2,\operatorname{local}} (\sigma, \omega) + \mathfrak{T}_{\mathbf{R},p'} ^{\ell^2 , \operatorname{local}}  (\omega, \sigma) + A_{p} ^{\ell^2, \operatorname{local}} (\sigma, \omega) + A_{p'} ^{\ell^2, \operatorname{local}} (\omega, \sigma) \, , 
    \]
    where the implicit constant only depends on $p$ and the doubling constants of $\sigma$ and $\omega$.
  \end{theorem}

\begin{remark}
    If the measures are doubling as in Theorem \ref{thm:new_testing_thm_Lp_vector}, both of the local quadratic Muckenhoupt conditions are comparable for \emph{doubling measures} by Lemma \ref{lem:duality_quadratic} part (\ref{lem:duality_quadratic_doubling}), and so only one of the two is needed.
\end{remark}

Aside from one instance where one needs the extreme energy reversal \cite[(6.14)]{SaWi}, which is implied by quadratic testing for the vector Riesz transform, the proof in \cite{SaWi} otherwise holds for general smooth Calder\'on-Zygmund operators $T$, and one can replace the quadratic testing conditions by the scalar local testing conditions for those parts of the proof. This is \emph{surprising} and suggests that even when $p \neq 2$ a $T1$ theorem only involving scalar conditions \emph{might} hold on pairs of doubling measures. However, such a result is only known for a much smaller class of measure pairs. Here is one such result, due to \cite{NTV2, Neu}.  
\begin{theorem}[\cite{NTV2, Neu}]\label{thm:new_testing_thm_Lp}
  Let $1<p<\infty$ and let $\sigma, \omega$ be measures on $\mathbb{R}^n$ for which $\sigma = \omega$ or $\sigma, \omega \in A_{\infty} (\mathbb{R}^n)$. If $T$ is a smooth Calder\'on-Zygmund operator, then
    \[
      \mathfrak{N}_{T,p} (\sigma, \omega) \lesssim \mathfrak{T}_{T,p} ^{\operatorname{local}} (\sigma, \omega) + \mathfrak{T}_{T^*,p'} ^{\operatorname{local}}  (\omega, \sigma) + A_{p}  (\sigma, \omega) + A_{p'}  (\omega, \sigma) \, , 
    \]
    where the implicit constant only depends on $p$, the Calder\'on-Zygmund data of $T$, and additionally the $A_{\infty}$ characteristics of $\sigma$ and $\omega$ in the second case.

  \end{theorem}
\begin{proof}[Proof sketch]
    If $\sigma = \omega$, this follows from \cite[Theorem 0.3]{NTV2}, noting that the required growth condition on $\sigma$ follows from the $A_p$ condition.
    
    If $\sigma, \omega \in A_{\infty} (\mathbb{R}^n)$, then both $\sigma$ and $\omega$ are absolutely continuous and satisfy a reverse H\" older inequality \cite{CoFe}. Thus if in addition we have $A_p  (\sigma, \omega) < \infty$, then there exists $\epsilon > 0$ for which we have
    \[
    \sup\limits_{Q} \left ( \frac{1}{|Q|} \int\limits_{Q} \sigma ^{1 + \epsilon} \right )^{\frac 1 p} \left ( \frac{1}{|Q|} \int\limits_{Q} \omega ^{1 + \epsilon} \right )^{\frac 1 p'} \lesssim A_p  (\sigma, \omega)^{1+\epsilon} < \infty \, .
    \]
    Thus the pair $(\sigma, \omega)$ satisfies the bump condition of \cite{Neu}. Hence there exists $W \in A_p (\mathbb{R}^n)$ for which
    \[
    \omega \lesssim W \lesssim \sigma^{1-p} 
    \]
    pointwise almost everywhere. Thus for all $f \in L^p (\sigma)$, the norm inequality holds since
    \[
   \int\limits_{\mathbb{R}^n} |T_{\sigma} f |^p \omega \lesssim \int\limits_{\mathbb{R}^n} |T (f \sigma) |^p W \lesssim   \int\limits_{\mathbb{R}^n} |f \sigma|^p W \lesssim \int\limits_{\mathbb{R}^n} |f|^p \sigma \, . \qedhere
    \]
\end{proof}
However when $p \neq 2$, the simple-minded $T1$ theorem philosophy breaks down in several instances.	In an unpublished note \cite{Naz}, Nazarov first showed that a scalar $T1$ theorem on $L^p$ fails for Haar multipliers, a class of well-localized operators. Then, \cite{AlLuSaUr2} showed that the scalar $T1$ theorem on $L^p$ fails for the Hilbert transform and a pair of measures $\sigma$ and $\omega$, neither of which is doubling. Now, our main Theorem \ref{thm:counter_example} below proves that when $p \neq 2$, the scalar two-weight $T1$ theorem cannot hold even for  ``nice'' doubling measures with doubling constant close to that of Lebesgue measure. Namely, in this setting with $T$ equal to any individual Riesz transform $R_j$, we show the scalar $T1$ theorem fails even if we replace the scalar local $A_p$ condition by the stronger scalar two-tailed $\mathcal{A}_p$ condition
  \begin{equation}
    \mathcal{A}_{p}\left(  \sigma,\omega\right)  \equiv \sup\limits_I \left( \frac{1}{ \left\vert I\right\vert} \int_{\mathbb{R}^n%
      }\frac{\left\vert I\right\vert ^{p}}{\left[  \ell \left ( I\right )
+\operatorname*{dist}\left(  x,I\right)  \right]  ^{n p}}d\omega\right)
^{\frac{1}{p}}\left( \frac{1}{ \left\vert I\right\vert}  \int_{\mathbb{R}^n}  \frac{\left\vert I\right\vert
  ^{p^{\prime}}}{\left[  \ell \left (  I\right ) +\operatorname*{dist}\left(
x,I\right)  \right]  ^{ n p^{\prime}}}d\sigma\right)  ^{\frac{1}{p^{\prime}}%
}<\infty  \label{Muck conditions} \, ,
\end{equation}
and we replace the scalar local testing constant by the bigger full testing constant
\[
	\left ( \mathfrak{T}_{T, p} ^{\operatorname{full}}  (\sigma, \omega) \right )^p \equiv \sup\limits_I \frac{1}{|I|_{\sigma}} \int\limits_{\mathbb{R}^n} \left | T_{\sigma} \mathbf{1}_I \right |^p d \omega \, . 
  \]
  Thus for doubling measures with doubling constant close to that of Lebesgue measure, inequality \eqref{eq:naive_T1} as well as any other like-minded conjecture involving classical scalar conditions, fails. 
In what follows, recall that the doubling constant of Lebesgue measure in the plane $\mathbb{R}^2$ is $4$, and $R_2$ denotes the vertical Riesz transform in the plane. A rotation extends the result below to the horizontal Riesz transform $R_1$.
  \begin{theorem}\label{thm:counter_example}
	  Let $p \in \left ( 1, \infty \right ) \setminus \left \{ 2 \right \}$. For every $\Gamma > 1$ and $\tau > 0$, there exist doubling measures $\sigma, \omega$ on $\mathbb{R}^2$ with doubling constant at most $4+\tau$, for which 
    \[
      \mathfrak{T}_{R_2, p} ^{\operatorname{full}}  (\sigma, \omega) \, , \mathfrak{T}_{R_2 , p'} ^{\operatorname{full}}  (\omega, \sigma) \lesssim 1 \text{ and } \mathcal{A}_p (\sigma, \omega) \, , \mathcal{A}_{p'} (\omega, \sigma) \lesssim 1 \, , 
    \]
    and yet
  \[
    \mathfrak{N}_{R_2, p} (\sigma, \omega) > \Gamma \, .
  \]
  Here the implicit constant only depends on $p$.
  \end{theorem}
  \begin{remark} Theorem \ref{thm:counter_example} also extends to $\mathbb{R}^n$. The correct ideas to extend the proofs of the propositions and lemmas in Section \ref{section:testing} crucial to the proof of Theorem \ref{thm:counter_example} can be found in \cite[Sections 4-5]{AlLuSaUr}. 
  \end{remark}

On one hand, our two results Theorems \ref{thm:new_testing_thm_Lp_vector} and \ref{thm:counter_example} suggest that if $(\sigma, \omega)$ are doubling, the $L^p$ norm inequality cannot be characterized without some sort of quadratic condition, whether it be quadratic testing or a quadratic Muckenhoupt condition. On the other hand, if the measure pair is too ``nice,'' say a pair of $A_{\infty}$ weights, then the proof of Theorem \ref{thm:new_testing_thm_Lp} shows the scalar $A_p$ condition characterizes the two-weight norm inequality on $L^p$ and there is no need for a testing condition whatsoever \cite{Neu, HyLa}. Like in \cite{AlLuSaUr}, this provides another operator-theoretic distinction between the class of doubling measures and $A_{\infty}$ weights.
  
  \begin{remark}\label{rmk:no_tails}
    Since $\sigma$ and $\omega$ can be constructed to have doubling constant at most $4+\epsilon$, then as we show in Proposition \ref{prop:small_doubling_tailed_Ap} below, if $\epsilon$ is small, the two-tailed scalar $\mathcal{A}_{p}(\sigma, \omega)$ characteristic is $\lesssim A_p (\sigma, \omega)$, and similarly for $\mathcal{A}_{p'} (\omega, \sigma)$. Furthermore, if we define the triple testing constant
    \[
	    \left ( \mathfrak{T}_{T, q} ^{\operatorname{triple}} (\sigma, \omega) \right )^q \equiv \sup\limits_Q \frac{1}{|Q|_{\sigma}} \int\limits_{3 Q} \left | T_{\sigma} \mathbf{1}_Q \right |^q d \omega \, , 
    \]
    then by Proposition \ref{prop:full_testing_triple}  it follows that $\mathfrak{T}_{T, p} ^{\operatorname{full}} (\sigma, \omega) \lesssim \mathcal{A}_{p} (\sigma, \omega) + \mathfrak{T}_{T,p} ^{\operatorname{triple}} (\sigma, \omega)$. Thus to show Theorem \ref{thm:counter_example}, it suffices to construct $\sigma$ and $\omega$ with doubling constant arbitrarily close to $4$ such that 
    \[
      \mathfrak{T}_{R_2, p} ^{\operatorname{triple}}  (\sigma, \omega) \, , \mathfrak{T}_{R_2 , p'}  ^{\operatorname{triple}} (\omega, \sigma) \leq C_p \text{ and } A_p (\sigma, \omega) \leq C_p \text{ and } \mathfrak{N}_{T, p} (\sigma, \omega) > \Gamma \, ,
    \]
    for all $\Gamma >1$, and where $C_p$ is a constant independent of $\Gamma$, only depending on $p$.
\end{remark}

To prove Theorem \ref{thm:counter_example}, we construct doubling measures $\sigma$ and $\omega$ on $\mathbb{R}^2$ for which the norm inequality for $R_2$ fails along a horizontal configuration, and yet the scalar testing constants for $R_2$ and these measures are \emph{as small as $A_p (\sigma, \omega)$}. To obtain the last point, our measures $\sigma$ and $\omega$  will only vary along the $e_1$ direction.  

More precisely, given  $\Gamma > 1$ arbitrarily large, the proof outline of Theorem \ref{thm:counter_example} is as follows:
  \begin{itemize}
    \item in Section \ref{section:Eric_example_weights}, we begin with positive weights $\sigma, \omega$ on $[0,1) \subset \mathbb{R}$ from \cite[Section 7.1]{SaWi}, which satisfy $A_p ^{\mathcal{D}} (\sigma, \omega) \lesssim 1$, but $A_{p} ^{\ell^2, \operatorname{local}, \mathcal{D}} (\sigma, \omega) > \Gamma$, and with large dyadic doubling ratio depending on $\Gamma$. 
    \item in Section \ref{section:KaTr}, we disarrange $\sigma$ and $\omega$ using the ``small step'' argument of \cite{KaTr} into weights $\tilde{\sigma}$ and $\tilde{\omega}$ with dyadic doubling constant arbitrarily close to $4$ (i.e., that of Lebesgue measure). None of the dyadic Muckenhoupt characteristic estimates we began with will be affected. 
    \item in Section \ref{section:transplantion}, we disarrange $\tilde{\sigma}$ and $\tilde{\omega}$ using Nazarov's remodeling \cite{NaVo, Naz2}\footnote{\cite{Naz2} is merely an online note. See the published paper \cite[Section 4]{NaVo} for full details.}, also called transplantation in \cite{AlLuSaUr2}, into new weights $\hat{\sigma}$ and $\hat{\omega}$ on $[0,1)$, which we then extend periodically to $\mathbb{R}$ and finally tensor with $1$ to get weights $\sigma$ and $\omega$ on $\mathbb{R}^2$. Remodeling affects the weights as follows:
      \begin{itemize}
	\item using Nazarov's transition intervals technique, we obtain doubling measures, i.e., not just dyadically doubling measures, with doubling constant arbitrarily close to $4$. See Section \ref{subsection:transplantation_blue_cubes}.
    \item the Muckenhoupt conditions will essentially be preserved, i.e., $A_p (\sigma, \omega) \lesssim 1$, while the quadratic Muckenhoupt characteristic $A_{p} ^{\ell^2, \operatorname{local}, \mathcal{D}} (\sigma, \omega) > \Gamma$. This will be done in Sections \ref{subsection:remodeling_Muckenhoupt} and \ref{section:Ap_rectangles}.
    \item transplantation localizes the Haar support of the weights to a well-separated set of \emph{horizontal} frequencies. The  testing constants for the \emph{vertical} Riesz transform $R_2$ will be bounded by the $A_p$ condition plus an error term, which can be made small by inductively increasing the separation of the frequencies, see also \cite[p.33-34]{AlLuSaUr2}. We do this in Section \ref{section:testing}.
      \end{itemize}
    \item since $\mathfrak{N}_{R_2, p} (\sigma, \omega) \gtrsim A_{p} ^{\ell^2, \operatorname{local}, \mathcal{D}} (\sigma, \omega) > \Gamma $, then the norm inequality cannot be controlled by scalar $A_p$ and scalar testing, thereby proving Theorem \ref{thm:counter_example}. 
     \end{itemize}
 \begin{remark} Like \cite{AlLuSaUr} did for the  Hilbert transform, a key part of our strategy is to nontrivially estimate the operator norm of $R_2$ from below using the quadratic Muckenhoupt condition.
 \end{remark}

     Finally, we record a straightforward corollary of \cite{KaTr} in Appendix \ref{section:counter_example_maximal_fn}: the scalar $A_p$ condition cannot characterize the two-weight norm inequality of the Hardy-Littlewood maximal function on doubling measures. We have not seen this result anywhere, and have even seen contradictory claims in the literature.

  \section{Preliminaries}

  \subsection{Dependence of implicit constants \texorpdfstring{in the symbol $\lesssim$}{}}\label{subsection:tilde}
If $f_1$ and $f_2$ are nonnegative functions, we write $f_1 \lesssim f_2$ if there is a constant $C$ such that 
\[
  f_1 \leq C f_2 \, ,
\]
for all values of their arguments. We define $\gtrsim$ similarly, and write $f_1 \approx f_2$ if $f_1 \lesssim f_2$ and $f_2 \lesssim f_1$.

For Theorem \ref{thm:new_testing_thm_Lp} and related arguments in Section \ref{section:positive_result}, we allow the constants implicit in $\lesssim$ to depend on $p$, the dimension $n$, the doubling constants of the measures, and the Calder\'on-Zygmund data for the operator.

As for Theorem \ref{thm:counter_example}, and Sections \ref{section:KaTr} and beyond, which are dedicated towards its proof, the constant implicit in $\lesssim$ only depends on $p$, since here we fix the dimension $n=2$, the operator $T = R_2$, and the measures will have bounded doubling constant.

\subsection{Duality for the quadratic Muckenhoupt characteristics}\label{subsection:quadratic_Muck_facts}
Given a measure $\mu$ on $\mathbb{R}^n$ and a collection $\mathcal{S}$ of (axis-parallel) cubes, define the $\mathcal{S}$-maximal function
\begin{equation}\label{eq:defn_M_mu}
  M_{\mu} ^{\mathcal{S}} f (x) \equiv \sup\limits_{I \in \mathcal{S} ~:~ x \in I } \frac{1}{|I|_{\mu}} \int\limits_I \left | f(y) \right | d\mu (y) \, ,
\end{equation}
where $|E|_{\mu}$ denotes the measure of a Borel set $E$, and where if $|I|_{\mu} =0$, the average on the right of \eqref{eq:defn_M_mu} is understood to be $0$. If $\mathcal{S}$ is unspecified, we take $\mathcal{S}$ to be the set of all cubes. In particular, $M_{\mu} f$ is the uncentered Hardy-Littlewood maximal function. Denoting the standard dyadic grid on $\mathbb{R}^n$ by 
\[
\mathcal{D} \equiv \{ 2^{-k}(j+ [0, 1)^n)\}_{j \in \mathbb{Z}^n, k \in \mathbb{Z}} \, , 
\]
we also recover the dyadic maximal function
$M_{\mu} ^{\mathcal{D}} f$ is  with respect to $\mu$.

The following lemma is already well-known for doubling measures \cite{GrLiYa}, and the following short proof of the dyadic case was observed by the second author. More precisely, we
were unable to find this statement explicitly in the literature, and while working on another project the second author noticed the case $p\geq2$
follows from the duality argument in \cite{FeSt}, and that the case $1<p\leq2$
then follows from the weak type $\left(  1,1\right)  $ inequality in
\cite[Theorem A.18 on page 251]{CrMaPe}, together with Marcinkiewicz
interpolation for Banach-space-valued functions, see, e.g., \cite[Theorem 1.18
on page 480]{GaRu}. For the convenience of the reader, we repeat the
short arguments suggested by the second author here. In what follows, 
recall a measure $\mu$ on $\mathbb{R}^n$ is doubling if there exists a constant $C$ such that for all cubes $Q \in \mathbb{R}^n$, we have
  \begin{align}\label{eq:doubling_def}
    |2 Q|_{\mu} \leq C |Q|_{\mu} \, ,
\end{align}
where $2Q$ denotes the cube with same center as $Q$ but whose sidelength is double that of $Q$.
The doubling constant $C_{\mu} ^{\operatorname{doub}}$ associated to $\mu$ is the best constant $C$ in the inequality \eqref{eq:doubling_def}.

\begin{lemma}[Fefferman-Stein type inequalities]\label{lem:FS}
  Let $\mu$ be locally finite Borel measure on $\mathbb{R}^n$, and let $p \in (1,\infty)$.
  \begin{enumerate}
    \item \label{lem:FS_dyadic} We have $M_{\mu} ^{\mathcal{D}} : L^p (\ell^2, \mu) \to L^p (\ell^2, \mu)$, i.e., for every sequence of functions $\left \{ f_i \right \}_i$ we have 
      \[
	\left \| \left \{ \sum\limits_{i} \left | M_{\mu}^{\mathcal{D}} f_i \right |^2 \right \}^{\frac{1}{2}} \right \|_{L^p (\mu)} \lesssim  \left \| \left \{ \sum\limits_{i} \left | f_i \right |^2 \right \}^{\frac{1}{2}} \right \|_{L^p (\mu)} \, . 
      \]
    \item \label{lem:FS_doubling} If $\mu$ is doubling, then $M_{\mu} : L^p (\ell^2, \mu) \to L^p (\ell^2, \mu)$, i.e., for every sequence of functions $f_i$ we have 
      \[
	\left \| \left \{ \sum\limits_{i} \left | M_{\mu} f_i \right |^2 \right \}^{\frac{1}{2}} \right \|_{L^p (\mu)} \lesssim  \left \| \left \{ \sum\limits_{i} \left | f_i \right |^2 \right \}^{\frac{1}{2}} \right \|_{L^p (\mu)} \, . 
      \]
        \end{enumerate} 
\end{lemma}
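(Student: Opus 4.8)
The plan is to prove the two Fefferman--Stein type vector-valued maximal inequalities separately for $p \geq 2$ and $1 < p \leq 2$, in each case reducing to known scalar statements via duality and interpolation, and then deducing the doubling case from the dyadic case.

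\textbf{The dyadic case, $p \geq 2$.}
First I would handle part \eqref{lem:FS_dyadic} for $p \geq 2$. The idea is the classical Fefferman--Stein duality trick adapted to an arbitrary locally finite $\mu$. Writing $q = (p/2)' $ for the dual exponent of $p/2 \geq 1$, one has
\[
  \left \| \left \{ \sum_i \left | M_\mu^{\mathcal D} f_i \right |^2 \right \}^{\frac 12} \right \|_{L^p(\mu)}^2
  = \left \| \sum_i \left | M_\mu^{\mathcal D} f_i \right |^2 \right \|_{L^{p/2}(\mu)}
  = \sup \int \sum_i \left | M_\mu^{\mathcal D} f_i(x) \right |^2 g(x) \, d\mu(x) \, ,
\]
where the supremum runs over $0 \le g$ with $\|g\|_{L^q(\mu)} \le 1$. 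For each fixed $i$ one estimates $\int |M_\mu^{\mathcal D} f_i|^2 g \, d\mu \le \int |f_i|^2 \, M_\mu^{\mathcal D} g \, d\mu$: this is the standard pointwise-plus-adjoint argument using that $M_\mu^{\mathcal D}$ is a maximal operator over the dyadic grid, so $(M_\mu^{\mathcal D})^2$ is controlled by composition with $M_\mu^{\mathcal D}$ on the other side (one can also linearize by choosing, for each $x$, a dyadic cube realizing the maximal average up to a factor $2$, and summing the resulting positive operators). Summing over $i$ and applying H\"older with exponents $p/2$ and $q$ gives the bound $\|\{\sum_i |f_i|^2\}^{1/2}\|_{L^p(\mu)}^2 \, \| M_\mu^{\mathcal D} g \|_{L^q(\mu)}$, and since $q > 1$ and $M_\mu^{\mathcal D}$ is bounded on $L^q(\mu)$ for \emph{every} locally finite $\mu$ (the dyadic maximal function is bounded on $L^q$ of an arbitrary measure, by the usual stopping-cube/Calder\'on--Zygmund argument), the factor $\|M_\mu^{\mathcal D} g\|_{L^q(\mu)} \lesssim \|g\|_{L^q(\mu)} \le 1$ is absorbed. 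This proves the dyadic inequality for $p \ge 2$.

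\textbf{The dyadic case, $1 < p \le 2$, by interpolation.}
For $1 < p \le 2$ I would interpolate between the $L^2(\ell^2,\mu)$ bound (the case $p=2$ just proved, or directly from the scalar $L^2(\mu)$ boundedness of $M_\mu^{\mathcal D}$ applied coordinatewise) and a weak-type endpoint at $p=1$. The endpoint is the vector-valued weak $(1,1)$ inequality
\[
  \mu \left ( \left \{ x : \left ( \sum_i |M_\mu^{\mathcal D} f_i(x)|^2 \right )^{\frac 12} > \lambda \right \} \right ) \lesssim \frac 1\lambda \left \| \left ( \sum_i |f_i|^2 \right )^{\frac 12} \right \|_{L^1(\mu)} ,
\]
which is exactly the content of the Cordoba--Fefferman / Fefferman--Stein result quoted from \cite[Theorem A.18]{CrMaPe} (valid for arbitrary measures in the dyadic setting, as its proof only uses the Calder\'on--Zygmund decomposition, which works for any locally finite $\mu$ on a dyadic grid). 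Marcinkiewicz interpolation for Banach-space-valued operators --- here with target Banach space $\ell^2$, as in \cite[Theorem 1.18, p.~480]{GaRu} --- then upgrades the weak $(1,1)$ and strong $(2,2)$ bounds to the strong $(p,p)$ bound $M_\mu^{\mathcal D} : L^p(\ell^2,\mu) \to L^p(\ell^2,\mu)$ for all $1 < p < 2$. Combined with the previous paragraph this gives part \eqref{lem:FS_dyadic} for all $1 < p < \infty$.

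\textbf{Passing to the doubling case.}
Finally, for part \eqref{lem:FS_doubling} assume $\mu$ is doubling. The point is that the uncentered Hardy--Littlewood maximal function $M_\mu$ is pointwise comparable to a bounded average of dyadic maximal functions over finitely (or countably) many shifted dyadic grids: by the standard ``one-third trick'' (or Mei's / Hyt\"onen--Kairema's adjacent dyadic systems), there exist $\mathcal{D}^{(1)}, \dots, \mathcal{D}^{(N)}$ with $N = N(n)$ such that every cube $Q$ is contained in some cube of some $\mathcal{D}^{(k)}$ of comparable sidelength, whence $M_\mu f(x) \lesssim \sum_{k=1}^N M_\mu^{\mathcal{D}^{(k)}} f(x)$ --- and here is exactly where doubling of $\mu$ is used, to compare $\frac{1}{\mu(Q)}\int_Q$ with $\frac{1}{\mu(Q')}\int_{Q'}$ when $Q \subseteq Q'$ with $\ell(Q') \le 3\ell(Q)$. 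Applying part \eqref{lem:FS_dyadic} to each grid $\mathcal{D}^{(k)}$ and using the triangle inequality in $\ell^2$ and in $L^p(\mu)$ yields
\[
  \left \| \left \{ \sum_i |M_\mu f_i|^2 \right \}^{\frac 12} \right \|_{L^p(\mu)}
  \lesssim \sum_{k=1}^N \left \| \left \{ \sum_i |M_\mu^{\mathcal{D}^{(k)}} f_i|^2 \right \}^{\frac 12} \right \|_{L^p(\mu)}
  \lesssim \left \| \left \{ \sum_i |f_i|^2 \right \}^{\frac 12} \right \|_{L^p(\mu)} ,
\]
as desired. The main obstacle --- really the only subtle point --- is making sure that every ingredient (dyadic maximal boundedness on $L^q$, the Calder\'on--Zygmund decomposition underlying the weak $(1,1)$ bound, and vector-valued Marcinkiewicz interpolation) is invoked in a form that does \emph{not} secretly require $\mu$ to be doubling in part \eqref{lem:FS_dyadic}; this is fine because all of these are purely dyadic/martingale statements valid for an arbitrary locally finite measure, and doubling enters only in the comparison step of part \eqref{lem:FS_doubling}.
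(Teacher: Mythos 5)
Your proposal is correct, and for part (\ref{lem:FS_dyadic}) it is essentially the paper's argument: for $p\geq 2$ you dualize $L^{p/2}(\mu)$ and reduce to the weighted inequality $\int |M_{\mu}^{\mathcal{D}}f|^{2}\,g\,d\mu \lesssim \int |f|^{2}\,M_{\mu}^{\mathcal{D}}g\,d\mu$ plus H\"older and the $L^{q}(\mu)$-boundedness of the dyadic maximal function, and for $1<p\leq 2$ you combine the vector-valued weak $(1,1)$ bound of \cite[Theorem A.18]{CrMaPe} with Banach-valued Marcinkiewicz interpolation \cite[Theorem 1.18]{GaRu}, exactly as in the text. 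One caveat on the $p\geq 2$ step: the paper does not treat the weighted inequality as an "adjoint" triviality; it proves it (inequality (\ref{FS 1979})) by establishing the weak type $(1,1)$ bound with respect to $w\,d\mu$ via maximal dyadic intervals and then interpolating. Your gloss "the standard pointwise-plus-adjoint argument / linearize and sum the positive operators" is too thin as written — the adjoint of the linearized operator is a sum over all dyadic cubes containing the point and is not pointwise dominated by $M_{\mu}^{\mathcal{D}}g$ without a distribution-function or stopping-time argument — so you should either reproduce the Fefferman--Stein weak-type argument or cite \cite{FeSt} explicitly; since the inequality is classical this is a presentational gap, not a flaw in the approach.

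For part (\ref{lem:FS_doubling}) your route genuinely differs from the paper's: the paper simply cites \cite{GrLiYa}, whereas you deduce the doubling case from the dyadic case via adjacent (shifted) dyadic systems, using doubling only to compare the average over an arbitrary cube $Q$ with the average over a containing dyadic cube of comparable sidelength, and then summing over the finitely many grids. This is correct (part (\ref{lem:FS_dyadic}) applies verbatim to each translated/dilated grid) and has the virtue of being self-contained and of isolating exactly where doubling enters; the paper's citation is shorter but imports an external result for spaces of homogeneous type.
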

\begin{remark}
  In (\ref{lem:FS_dyadic}), the implicit constant \emph{only} depends on $p$ and the dimension $n$. In (\ref{lem:FS_doubling}), the implicit constant additionally depends on the doubling constant of $\mu$.
\end{remark}
\begin{proof}
	The proof of part (\ref{lem:FS_doubling}) can be found in \cite{GrLiYa}. We now present a proof sketch of (\ref{lem:FS_dyadic}).

	First, for any nonnegative $w \in L^1 _{loc} (\mu)$, we
claim that
\begin{equation}
	\int_{\mathbb{R}}\left\vert M_{\mu}^{\mathcal{D}}f\left(  x\right)
\right\vert ^{q}w\left(  x\right)  d\mu\left(  x\right)  \leq C_{q}%
\int_{\mathbb{R}}\left\vert f\left(  x\right)  \right\vert ^{q}M_{\mu
}^{\mathcal{D}}w\left(  x\right)  d\mu\left(  x\right)
,\ \ \ \ \ 1<q<\infty.\label{FS 1979}%
\end{equation}
Indeed, let $\lambda>0$ and write $\left\{  M_{\mu
}^{\mathcal{D}}f>\lambda\right\}  =\bigcup\limits
_{j=1}^{\infty}I_{j}$ where $I_{j}$ are the maximal dyadic cubes
satisfying $\frac{1}{\left\vert I_{j}\right\vert _{\mu}}\int_{I_{j}}\left\vert
f\right\vert d\mu>\lambda$. Then we have the weak type $\left(  1,1\right)  $
inequality,%
\[
\left\vert \left\{  M_{\mu}^{\mathcal{D}}f>\lambda\right\}  \right\vert
_{w\mu}  =\sum_{j}\left\vert I_{j}\right\vert _{w\mu}=\sum_{j}\left(
\frac{1}{\left\vert I_{j}\right\vert _{\mu}}\left\vert I_{j}\right\vert
_{w\mu}\right)  \left\vert I_{j}\right\vert _{\mu}
\]
\[\leq\sum_{j}\left(  \frac{1}{\left\vert I_{j}\right\vert _{\mu}}\left\vert
I_{j}\right\vert _{w\mu}\right)  \frac{1}{\lambda}\int_{I_{j}}\left\vert
f\right\vert d\mu\leq\frac{1}{\lambda}\int_{\mathbb{R}}\left\vert f\right\vert
\left(  M_{\mu}^{\mathcal{D}}w\right)  d\mu,
\]
as well as the strong type $\left(  \infty,\infty\right)  $ inequality.
Marcinkiewicz interpolation now gives (\ref{FS 1979}).

	From (\ref{FS 1979}) with $q=\frac{p}{2}\geq1$ we have for any $g \in L^{q'} (\mu)$ that,%
\[ 
\int_{\mathbb{R}}\left(  \sum_{i=1}^{\infty}\left\vert M_{\mu}%
^{\mathcal{D}}f_{i}\right\vert ^{2}\right)  gd\mu  \leq \int_{\mathbb{R}}\sum_{i=1}^{\infty}\left\vert f_{i}\right\vert ^{2}M_{\mu
}^{\mathcal{D}}gd\mu\leq  \left(  \int_{\mathbb{R}}\left(  \sum
_{i=1}^{\infty}\left\vert f_{i}\right\vert ^{2}\right)  ^{q}d\mu\right)
^{\frac{1}{q}}\left(  \int_{\mathbb{R}}\left(  M_{\mu}^{\mathcal{D}%
}g\right)  ^{q^{\prime}}d\mu\right)  ^{\frac{1}{q^{\prime}}}
\]
\[\leq C_{q} \left(  \int_{\mathbb{R}}\left(  \sum_{i=1}^{\infty}\left\vert
f_{i}\right\vert ^{2}\right)  ^{\frac{p}{2}}d\mu\right)  ^{\frac{2}{p}}\left(
\int_{\mathbb{R}}\left\vert g\right\vert ^{q^{\prime}}d\mu\right)  ^{\frac
{1}{q^{\prime}}} \,  .
\]
Taking the supremum over all $g \in L^{q'} (\mu)$ yields (\ref{lem:FS_dyadic}) for $p\geq2$.

The weak type $\left(  1,1\right)  $ inequality in \cite[Theorem A.18 on page
251]{CrMaPe} says that%
\[
\left\vert \left\{  \left\vert M_{\mu}^{\mathcal{D}}\mathbf{f}%
\right\vert _{\ell^{2}}>\lambda\right\}  \right\vert _{\mu}\leq\frac
{C}{\lambda}\int\left\vert \mathbf{f}\right\vert _{\ell^{2}}d\mu,
\]
where given a vector $\mathbf{f} = ( f_i )_i$, we define $M_{\mu} ^{\mathcal{D}} \mathbf{f} \equiv ( M_{\mu} ^{\mathcal{D}} f_i )_i$ and $|\mathbf{f}|_{\ell^2} ^2\equiv \sum\limits_i |f_i|^2$. The Marcinkiewicz interpolation theorem in \cite[Theorem 1.18 on page
480]{GaRu} then completes the proof of (\ref{lem:FS_dyadic}).\end{proof}

Given  a countable collection of cubes $\mathcal{S}$, define the $A_{p} ^{\ell^2, \operatorname{local}, \mathcal{S}}(\sigma, \omega)$ characteristic as the best constant $C$ in the inequality
\begin{align}\label{eq:Ap_quadratic_S}
  \left \| \left ( \sum\limits_{J \in \mathcal{S}} a_J ^2  \left (E_{J} \sigma \right )^2 \mathbf{1}_{J} \right )^{\frac{1}{2}} \right \|_{L^p (\omega)} \leq C \left \|\left ( \sum\limits_{J \in \mathcal{S}}  a_J ^2 \mathbf{1}_{J} \right )^{\frac{1}{2}} \right \|_{L^p (\sigma)} \, ,
\end{align}
where given locally finite measure $\mu$ on a cube $Q$, its expectation on $Q$ is defined as
\[
  E_{Q} \mu \equiv \frac{\left | Q \right |_{\mu}}{ \left | Q \right |} \, . 
\]
And given $f \in L^1 _{\operatorname{loc}} (\mu)$, we also define its $\mu$-expectation over $Q$ by 
\[
  E_{Q} ^{\mu} f \equiv \frac{1}{\left | Q \right |_{\mu} } \int\limits_Q f(x) d \mu (x) \, .
\]
If $\mu$ is not specified, then $\mu$ is understood to be Lebesgue measure, i.e., $E_Q f \equiv \frac{1}{|Q|} \int\limits_{Q} f dx$. 

We have the following duality statement for the quadratic Muckenhoupt conditions.
\begin{lemma}[Duality of Quadratic conditions, {\cite[Lemma 3.1]{Vuo}}]\label{lem:duality_quadratic}
  Let $\sigma$ and $\omega$ be locally finite Borel measures, and let $1<p<\infty$. If $\mathcal{S}$ is a collection of cubes for which 
  \[
	  M_{\sigma} ^{\mathcal{S}} : L^p (\ell^2, \sigma) \to L^p (\ell^2, \sigma) \, , \qquad M_{\omega} ^{\mathcal{S}} : L^{p'} (\ell^2, \omega) \to L^{p'} (\ell^2, \omega) \, , 
  \]
  then $A_{p} ^{\ell^2, \operatorname{local}, \mathcal{S}} (\sigma, \omega) \approx A_{p'} ^{\ell^2 \operatorname{local}, \mathcal{S}} (\omega, \sigma)$. In particular, 
  \begin{enumerate}
    \item \label{lem:duality_quadratic_dyadic}$A_{p} ^{\ell^2, \operatorname{local}, \mathcal{D}} (\sigma, \omega) \approx A_{p'} ^{\ell^2 \operatorname{local}, \mathcal{D}} (\omega, \sigma)$.
    \item \label{lem:duality_quadratic_doubling} If $\sigma$ and $\omega$ are doubling measures, then $A_{p} ^{\ell^2, \operatorname{local}} (\sigma, \omega) \approx A_{p'} ^{\ell^2, \operatorname{local}} (\omega, \sigma)$.
  \end{enumerate} 
\end{lemma}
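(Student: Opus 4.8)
The plan is to recast the defining inequality \eqref{eq:Ap_quadratic_S} as a bilinear estimate, symmetrize it by testing against indicator sequences, and then re-dualize, absorbing the extra layer of averaging that appears with the vector-valued maximal bounds in the hypothesis. Write $A:=A_{p}^{\ell^{2},\operatorname{local},\mathcal S}(\sigma,\omega)$, let $L^{q}(\ell^{2},\mu)$ denote the space of sequences $\{f_J\}_{J\in\mathcal S}$ with norm $\bigl\|\bigl(\sum_J|f_J|^{2}\bigr)^{1/2}\bigr\|_{L^{q}(\mu)}$ (whose dual, for $1<q<\infty$, is $L^{q'}(\ell^{2},\mu)$ under the pairing $\langle\{f_J\},\{g_J\}\rangle=\sum_J\int f_Jg_J\,d\mu$), and observe that by monotone convergence it is enough to argue with finitely supported, nonnegative sequences.

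First I would note that $\bigl(\sum_J a_J^{2}(E_J\sigma)^{2}\mathbf{1}_J\bigr)^{1/2}$ is the pointwise $\ell^{2}$-norm of the sequence $\{a_J(E_J\sigma)\mathbf{1}_J\}_J$, so a single duality turns \eqref{eq:Ap_quadratic_S} into
\[
\sum_J a_J(E_J\sigma)\!\int_J h_J\,d\omega \ \le\ A\,\Bigl\|\Bigl(\sum_J a_J^{2}\mathbf{1}_J\Bigr)^{1/2}\Bigr\|_{L^{p}(\sigma)}\Bigl\|\Bigl(\sum_J h_J^{2}\Bigr)^{1/2}\Bigr\|_{L^{p'}(\omega)}
\]
for all scalar sequences $\{a_J\}$ and all function sequences $\{h_J\}$. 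Specializing $h_J=b_J\mathbf{1}_J$ and using $(E_J\sigma)\,|J|_\omega=(E_J\sigma)(E_J\omega)\,|J|$ then produces the symmetric bilinear inequality
\[
\sum_J a_Jb_J\,(E_J\sigma)(E_J\omega)\,|J| \ \le\ A\,\Bigl\|\Bigl(\sum_J a_J^{2}\mathbf{1}_J\Bigr)^{1/2}\Bigr\|_{L^{p}(\sigma)}\Bigl\|\Bigl(\sum_J b_J^{2}\mathbf{1}_J\Bigr)^{1/2}\Bigr\|_{L^{p'}(\omega)},
\]
which is the form in which I will use the hypothesis.

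Next I would bound $A_{p'}^{\ell^{2},\operatorname{local},\mathcal S}(\omega,\sigma)$. Dualizing $\bigl\|\bigl(\sum_J b_J^{2}(E_J\omega)^{2}\mathbf{1}_J\bigr)^{1/2}\bigr\|_{L^{p'}(\sigma)}$ against $L^{p}(\ell^{2},\sigma)$ yields test functions $\{v_J\}$ with $\bigl\|\bigl(\sum_J v_J^{2}\bigr)^{1/2}\bigr\|_{L^{p}(\sigma)}\le 1$, and the pairing rewrites as $\sum_J b_J(E_J\omega)\int_J v_J\,d\sigma=\sum_J b_J\,(E_J^{\sigma}v_J)\,(E_J\sigma)(E_J\omega)\,|J|$. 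Applying the symmetric inequality above with $a_J:=E_J^{\sigma}v_J$ reduces the matter to the single estimate $\bigl\|\bigl(\sum_J (E_J^{\sigma}v_J)^{2}\mathbf{1}_J\bigr)^{1/2}\bigr\|_{L^{p}(\sigma)}\lesssim\bigl\|\bigl(\sum_J v_J^{2}\bigr)^{1/2}\bigr\|_{L^{p}(\sigma)}$, which is where the hypothesis enters: for $x\in J\in\mathcal S$ one has $E_J^{\sigma}v_J\le M_{\sigma}^{\mathcal S}v_J(x)$, hence $\sum_J(E_J^{\sigma}v_J)^{2}\mathbf{1}_J\le\sum_J(M_{\sigma}^{\mathcal S}v_J)^{2}$ pointwise, and $M_{\sigma}^{\mathcal S}:L^{p}(\ell^{2},\sigma)\to L^{p}(\ell^{2},\sigma)$ closes the estimate. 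This gives $A_{p'}^{\ell^{2},\operatorname{local},\mathcal S}(\omega,\sigma)\lesssim A$; the reverse inequality is the verbatim mirror image, interchanging $(p,\sigma)\leftrightarrow(p',\omega)$ and using $M_{\omega}^{\mathcal S}:L^{p'}(\ell^{2},\omega)\to L^{p'}(\ell^{2},\omega)$ instead. Part (\ref{lem:duality_quadratic_dyadic}) then follows by feeding Lemma \ref{lem:FS}(\ref{lem:FS_dyadic}) into the main statement (once with $\mu=\sigma$, exponent $p$, and once with $\mu=\omega$, exponent $p'$), and part (\ref{lem:duality_quadratic_doubling}) follows the same way from Lemma \ref{lem:FS}(\ref{lem:FS_doubling}).

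The only genuine step is that last estimate — trading the whole family of cube-averages $\{E_J^{\sigma}v_J\}$ for the single maximal function $M_{\sigma}^{\mathcal S}v_J$ and then invoking the vector-valued maximal theorem. Everything else is routine $L^{q}(\ell^{2})$-duality, which holds across the entire range $1<q<\infty$, so, in contrast with Lemma \ref{lem:FS}, there is no need here to treat $p\ge 2$ and $1<p\le 2$ separately. Minor care is needed only for degenerate cubes, where $|J|_\sigma=0$ or $|J|_\omega=0$ makes the relevant terms vanish under the convention $\tfrac{1}{0}\cdot 0=0$, and for the harmless reduction to finitely supported sequences.
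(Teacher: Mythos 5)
Your proposal is correct and takes essentially the paper's own route: dualize the quadratic condition into a bilinear pairing, use the identity $(E_J\sigma)\,|J|_\omega=(E_J\sigma)(E_J\omega)\,|J|$ (the paper's Fubini step), and then absorb the averages $E_J^{\sigma}v_J$ by the hypothesized vector-valued maximal bound, with parts (1) and (2) following from Lemma \ref{lem:FS}. The only difference is cosmetic orientation: you prove $A_{p'}^{\ell^2,\operatorname{local},\mathcal{S}}(\omega,\sigma)\lesssim A_{p}^{\ell^2,\operatorname{local},\mathcal{S}}(\sigma,\omega)$ using $M_{\sigma}^{\mathcal{S}}$ on $L^{p}(\ell^2,\sigma)$, whereas the paper proves the reverse inequality using $M_{\omega}^{\mathcal{S}}$ on $L^{p'}(\ell^2,\omega)$, each side then obtaining the other direction by symmetry.
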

\begin{proof} Conclusions (\ref{lem:duality_quadratic_dyadic}) and (\ref{lem:duality_quadratic_doubling}) follow from the first ``if\dots then\dots'' statement of the lemma and the Fefferman-Stein inequality Lemma \ref{lem:FS}.

 We claim that \begin{equation}\label{eq:dual_quad_1}
  \left \| \left ( \sum\limits_{J \in \mathcal{S}} \left | \left ( E_{J} ^{\omega}  g_J \right ) \mathbf{1}_J (y) \left (E_J \omega \right ) \right |^2 \right )^{\frac{1}{2}} \right \|_{L^{p'} (\sigma)} \lesssim A_{p'} ^{\ell^2, \operatorname{local}, \mathcal{S}}   \left \| \left ( \sum\limits_{J \in \mathcal{S}} \left | g_J  \right |^2 \right )^{\frac{1}{2}} \right \|_{L^{p'} (\omega)} \,  
\end{equation}
for all sequences $\{g_J\}_J$.
Indeed, by the definition of the $A_{p'}^{\ell^2, \operatorname{local}, \mathcal{S}}(\omega, \sigma)$  characteristic in \eqref{eq:Ap_quadratic_S}, followed by the Fefferman-Stein inequality of Lemma \ref{lem:FS} yields 
      \begin{multline*}
\left \| \left ( \sum\limits_{J \in \mathcal{S}} \left | \left ( E_{J} ^{\omega}  g_J \right ) \mathbf{1}_J (y) \left (E_J \omega \right ) \right |^2 \right )^{\frac{1}{2}} \right \|_{L^{p'} (\sigma)} \leq A_{p'}^{\ell^2, \operatorname{local}, \mathcal{S}}(\omega, \sigma)\left \| \left ( \sum\limits_{J \in \mathcal{S}} \left | \left ( E_{J} ^{\omega}  g_J \right ) \mathbf{1}_J (y)  \right |^2 \right )^{\frac{1}{2}} \right \|_{L^{p'} (\omega)} \\ 
\leq A_{p'}^{\ell^2, \operatorname{local}, \mathcal{S}}(\omega, \sigma)\left \| \left ( \sum\limits_{J \in \mathcal{S}} \left | M_{\omega} ^{\mathcal{S}} g_J  \right |^2 \right )^{\frac{1}{2}} \right \|_{L^{p'} (\omega)} 
\lesssim A_{p'}^{\ell^2, \operatorname{local}, \mathcal{S}}(\omega, \sigma)\left \| \left ( \sum\limits_{J \in \mathcal{S}} \left | g_J  \right |^2 \right )^{\frac{1}{2}} \right \|_{L^{p'} (\omega)} \, .
      \end{multline*}

Then \eqref{eq:dual_quad_1} combined with Cauchy-Schwarz in $\sum\limits_{\mathcal{S}}$ followed by H\"older's inequality in $\int$ shows that \[
	\int \sum\limits_{J \in \mathcal{S}} a_J \mathbf{1}_J (y) \left (E_J \omega \right ) \left ( E_{J} ^{\omega}  g_J \right )  d \sigma (y) \lesssim A_{p'} ^{\ell^2, \operatorname{local}, \mathcal{S}}(\omega, \sigma)  \left \| \left ( \sum\limits_{J \in \mathcal{S}} \left |a_J \mathbf{1}_J  \right |^2 \right )^{\frac{1}{2}} \right \|_{L^p (\sigma)} \left \| \left ( \sum\limits_{J \in \mathcal{S}} \left | g_J  \right |^2 \right )^{\frac{1}{2}} \right \|_{L^{p'} (\omega)} \, , 
      \]
across all choices of coefficients $\{a_J\}_{J}$ and sequences of functions $\{g_J\}_{J}$. Or rather, by Fubini, 
\[
     \int \sum\limits_{J \in \mathcal{S}} a_J \mathbf{1}_J (x) \left (E_J \sigma \right ) g_J (x) d \omega \lesssim A_{p'} ^{\ell^2, \operatorname{local}, \mathcal{S}} (\omega, \sigma)  \left \| \left ( \sum\limits_{J \in \mathcal{S}} \left |a_J \mathbf{1}_J  \right |^2 \right )^{\frac{1}{2}} \right \|_{L^p (\sigma)} \left \| \left ( \sum\limits_{J \in \mathcal{S}} \left | g_J  \right |^2 \right )^{\frac{1}{2}} \right \|_{L^{p'} (\omega)} \, . 
      \]
Because this holds across all sequences $\{g_J\}$ of functions, duality then yields \eqref{eq:Ap_quadratic_S} with ``$\leq C$'' replaced by ``$\lesssim A_{p'} ^{\ell^2, \operatorname{local}, \mathcal{S}} ( \omega , \sigma)$.''
Thus  $A_p ^{\ell^2, \operatorname{local}, \mathcal{S}} (\sigma, \omega) \lesssim A_{p'} ^{\ell^2, \operatorname{local}, \mathcal{S}} ( \omega , \sigma)$. The reverse inequality holds by symmetry. 
    \end{proof}

    When $\sigma$ and $\omega$ are doubling, by the Fefferman-Stein inequality Lemma \ref{lem:FS} combined with the bounds 
    \begin{align}\label{eq:FS_key_application_piece}
	    a_J ^2 \mathbf{1}_{J^*} \lesssim \left (  M_{\sigma} a_J \mathbf{1}_{J} \right )^2 \mathbf{1}_{J^*} \, , \qquad a_J ^2 \mathbf{1}_{J} \lesssim \left (  M_{\sigma} a_J \mathbf{1}_{J^*} \right )^2 \mathbf{1}_{J} \, ,
    \end{align} we have 
    \begin{equation}\label{eq:local_equiv_offset_Muck}
        A_{p} ^{\ell^2, \operatorname{local}} (\sigma, \omega) \approx A_{p} ^{\ell^2, \operatorname{offset}} (\sigma, \omega ; C_0)
    \end{equation} for doubling pairs $(\sigma, \omega)$ where $A_{p} ^{\ell^2, \operatorname{offset}} (\sigma, \omega ; C_0)$ is the best constant in the inequality 
\begin{align}\label{forward_Ap_offset}
  \left \| \left ( \sum\limits_{J} a_J ^2  \left (E_{J^*} \sigma \right )^2 \mathbf{1}_{J} \right )^{\frac{1}{2}} \right \|_{L^p (\omega)} \leq  C \left \|\left ( \sum\limits_{J}  a_J ^2 \mathbf{1}_{J^*} \right )^{\frac{1}{2}} \right \|_{L^p (\sigma)} ,
\end{align}
where the inequality varies over sets $\{J\}$ and $\{a_J\}$ as before, and $J^*$ denotes a dyadic cube in any dyadic grid containing $J$, whose sidelenght $\ell(J^*)$ equals $\ell(J)$ and $\operatorname{dist}(J, J^*) < C_0 \ell(J)$ for some positive constant $C_0$;   see also the explanation in \cite[from (1.6) till end of p.6]{SaWi}. While the implicit constant in \eqref{eq:local_equiv_offset_Muck} will depends on $C_0$, in applications one takes $C_0$ depending only on $p$, the dimension $n$ and the Stein elliptic constant for the operator $T$, making the implicit constant in \eqref{eq:local_equiv_offset_Muck} again only depend on $p$, the doubling constants of $\sigma$ and $\omega$, and the Calder\'on-Zygmund data of the operator.

\subsection{Calder\'on-Zygmund operators, and making sense of the norm inequality}\label{subsection:norm}

For $0\leq \lambda <n$ we define a smooth $\lambda $-fractional Calder\'{o}%
n-Zygmund kernel $K^{\lambda }(x,y)$ to be a function $K^{\lambda }:\mathbb{R%
}^{n}\times \mathbb{R}^{n}\rightarrow \mathbb{R}$ satisfying the following
fractional size and smoothness conditions%
\begin{equation}
\left\vert \nabla _{x}^{j}K^{\lambda }\left( x,y\right) \right\vert
+\left\vert \nabla _{y}^{j}K^{\lambda }\left( x,y\right) \right\vert \leq
C_{\lambda ,j}\left\vert x-y\right\vert ^{\lambda -j-n},\ \ \ \ \ 0\leq
j<\infty ,  \label{sizeandsmoothness'}
\end{equation}%
and we denote by $T^{\lambda }$ the associated $\lambda $-fractional
 Calder\'on-Zygmund operator on $\mathbb{R}^{n}$. When $\lambda = 0$, as in the case of Theorem \ref{thm:counter_example}, we will simply write $T$ and $K$.
An operator $T^{\lambda}$ is  Stein-elliptic if there exists a unit vector $\mathbf{u}$ and a constant $c$ such that for all $x\in \mathbb{R}^n$ and $t \in \mathbb{R}$, we have
\[
  \left | K(x, x+t \mathbf{u}) \right | > \frac{c}{|t|^{n- \lambda}} \, . 
\]
The best such constant $c$ is called the Stein-ellipticity constant for $T$.  

As in \cite[see page 314]{SaShUr9}, we introduce a family $\left\{ \eta
_{\delta ,R}^{\lambda }\right\} _{0<\delta <R<\infty }$ of smooth
nonnegative functions on $\left[ 0,\infty \right) $ so that the truncated
kernels $K_{\delta ,R}^{\lambda }\left( x,y\right) =\eta _{\delta
,R}^{\lambda }\left( \left\vert x-y\right\vert \right) K^{\lambda }\left(
x,y\right) $ are bounded with compact support for fixed $x$ or $y$, and
uniformly satisfy (\ref{sizeandsmoothness'}). Then the truncated operators 
\begin{equation*}
T_{\sigma ,\delta ,R}^{\lambda }f\left( x\right) \equiv \int_{\mathbb{R}%
^{n}}K_{\delta ,R}^{\lambda }\left( x,y\right) f\left( y\right) d\sigma
\left( y\right) ,\ \ \ \ \ x\in \mathbb{R}^{n},
\end{equation*}%
are pointwise well-defined when $f$ is bounded with compact support, and we
will refer to the pair $\left( K^{\lambda },\left\{ \eta _{\delta
,R}^{\lambda }\right\} _{0<\delta <R<\infty }\right) $ as a $\lambda $%
-fractional singular integral operator, which we typically denote by $%
T^{\lambda }$, suppressing the dependence on the truncations. For $%
1<p<\infty $, we say that a $\lambda $-fractional Calder\'on-Zygmund operator 
$T^{\lambda }=\left( K^{\lambda },\left\{ \eta _{\delta ,R}^{\lambda
}\right\} _{0<\delta <R<\infty }\right) $ satisfies the norm inequality%
\begin{equation}
\left\Vert T_{\sigma }^{\lambda }f\right\Vert _{L^{p}\left( \omega \right)
}\leq \mathfrak{N}_{T^{\lambda }}\left( \sigma ,\omega \right) \left\Vert
f\right\Vert _{L^{p}\left( \sigma \right) },\ \ \ \ \ f\in L^{p}\left(
\sigma \right) ,  \label{two weight'}
\end{equation}%
where $\mathfrak{N}_{T^{\lambda }}\left( \sigma ,\omega \right) $ denotes
the best constant in (\ref{two weight'}), provided%
\begin{equation*}
\left\Vert T_{\sigma ,\delta ,R}^{\lambda }f\right\Vert _{L^{p}\left( \omega
\right) }\leq \mathfrak{N}_{T^{\lambda }}\left( \sigma ,\omega \right)
\left\Vert f\right\Vert _{L^{p}\left( \sigma \right) },\ \ \ \ \ f\in
L^{p}\left( \sigma \right) ,0<\delta <R<\infty .
\end{equation*}%
If the classical Muckenhoupt characteristic 
\[
  A_{p}^{\lambda } (\sigma, \omega) \equiv \sup\limits_{I} \left ( \frac{\left |I \right |_{\sigma}}{ \left |I \right | ^{1 - \frac{\lambda}{n}}} \right )^{\frac{1}{p'}} \left ( \frac{\left |I \right |_{\omega}}{ \left |I \right | ^{1 - \frac{\lambda}{n}}} \right )^{\frac{1}{p}}
\] is finite, it
can be easily shown that the norm inequality is independent of the choice of
truncations used - see, e.g., \cite{LaSaShUr3} where rough operators are
treated in the case $p=2$, but the proofs can be modified for general $p$.

Given such an operator $T^{\lambda}$, we use the same abuse of notation as in \eqref{two weight'} in defining the testing conditions: the testing constants are understood as the best constants across all of our truncations. We also define the local testing constant over a collection $\mathcal{S}$ of cubes by 
\[
	\left ( \mathfrak{T}_{T^{\lambda},p} ^{\mathcal{S}, \operatorname{local}} (\sigma, \omega) \right )^{p} \equiv \sup\limits_{Q \in \mathcal{S}} \frac{1}{|Q|_{\sigma}} \int\limits_Q \left | T_{\sigma} ^{\lambda} \mathbf{1}_{Q} (x) \right |^p d \omega(x) \, ;
\]
one may define the $\mathcal{S}$ triple-testing constant $\mathfrak{T}_{T^{\lambda},p} ^{\mathcal{S},\operatorname{triple}}(\sigma, \omega)$ and $\mathcal{S}$ full-testing constant $\mathfrak{T}_{T ^{\lambda},p} ^{\mathcal{S}, \operatorname{full} }(\sigma, \omega)$ similarly. As previously, if $\mathcal{S}$ is not specified, our convention is that the supremum is then taken over all cubes in $\mathbb{R}^n$.
And given an operator $T^{\lambda}$, define the $\mathfrak{T}_{T^{\lambda}, p} ^{\ell^2, \operatorname{local},\mathcal{S} } (\sigma, \omega)$  characteristic as the best constant $C$ in the inequality
\begin{align}\label{eq:quad_testing}
  \left \| \left ( \sum\limits_{J \in \mathcal{S}} a_J ^2  \left (T_{\sigma} ^{\lambda} \mathbf{1}_J \right )^2 \mathbf{1}_{J} \right )^{\frac{1}{2}} \right \|_{L^p (\omega)} \leq C \left \|\left ( \sum\limits_{J \in \mathcal{S}}  a_J ^2 \mathbf{1}_{J} \right )^{\frac{1}{2}} \right \|_{L^p (\sigma)} \, .
\end{align}

Our interest is when $T$ equals an individual Riesz transform $R_j$, $j=1, \ldots, n$,  which are defined on $\mathbb{R}^n$ by
\[
  R_j  f(x) = c_n ~ \mathrm{p.v.} \int\limits_{\mathbb{R}^n} \frac{y_j}{|y|^{n+1}} f(x-y) \, dy \, , \qquad \text{ where } c_n = \frac{\Gamma(\frac{n+1}{2})}{\pi^{\frac{n+1}{2}}} \, . 
\]
Note that each $R_j$ is Stein-elliptic with $\mathbf{u} = e_j$.
We also define the vector Riesz transform on $\mathbb{R}^n$ by
\[
\mathbf{R} f \equiv (R_1 f, \ldots, R_n f) \, .
\]

\subsection{Proof of Remark \ref{rmk:no_tails}}\label{subsection:reduce_tails}
The following propositions justify Remark \ref{rmk:no_tails}; their statements and proofs are common knowledge, but we include them for the convenience of the reader.
\begin{proposition}\label{prop:full_testing_triple} Let $1<p<\infty$, let $\sigma$ and $\omega$ be locally finite measures, and let $T$ be a Calder\'on-Zygmund operator. Then 
  \[
    \mathfrak{T}_{T, p} ^{\operatorname{full}} (\sigma, \omega) \lesssim \mathcal{A}_{p} (\sigma, \omega) + \mathfrak{T}_{T,p} ^{\operatorname{triple}} (\sigma, \omega) \, .
  \]
\end{proposition}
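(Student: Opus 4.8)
The plan is to split the full testing integral $\int |T_\sigma \mathbf{1}_I|^p\,d\omega$ into a local piece over $3I$ and a global piece over $\mathbb{R}^n\setminus 3I$, and to dominate each by the allowed quantities. The local piece is immediate: $\frac{1}{|I|_\sigma}\int_{3I}|T_\sigma\mathbf{1}_I|^p\,d\omega \le \mathfrak{TR}_{T,p}(\sigma,\omega)^p$ by definition of the triple testing constant (with $Q = I$). So the entire content of the proposition is the estimate of the tail $\frac{1}{|I|_\sigma}\int_{\mathbb{R}^n\setminus 3I}|T_\sigma\mathbf{1}_I(x)|^p\,d\omega(x) \lesssim \mathcal{A}_p(\sigma,\omega)^p$.

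For the tail, fix $x\notin 3I$. Using the size estimate \eqref{sizeandsmoothness'} with $j=0$ and $\lambda = 0$, for $y\in I$ we have $|x-y|\gtrsim \ell(I)+\operatorname{dist}(x,I)$, so
\[
  |T_\sigma\mathbf{1}_I(x)| \le \int_I |K(x,y)|\,d\sigma(y) \lesssim \frac{|I|_\sigma}{\left[\ell(I)+\operatorname{dist}(x,I)\right]^{n}} = |I|_{\sigma}\cdot\frac{|I|^{-1}\,|I|\,}{\left[\ell(I)+\operatorname{dist}(x,I)\right]^{n}}\,,
\]
where I have used $\ell(I)^n \approx |I|$. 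Therefore
\[
  \frac{1}{|I|_{\sigma}}\int_{\mathbb{R}^n\setminus 3I}|T_\sigma\mathbf{1}_I(x)|^p\,d\omega(x) \lesssim |I|_\sigma^{p-1}\int_{\mathbb{R}^n}\frac{d\omega(x)}{\left[\ell(I)+\operatorname{dist}(x,I)\right]^{np}}\,.
\]
Now write $|I|_\sigma^{p-1} = \left(\frac{|I|_\sigma}{|I|}\right)^{p-1}|I|^{p-1}$ and $|I|^{p-1} = |I|^{p/p'}$, and pair the $\omega$-integral with the definition of $\mathcal{A}_p$: one recognizes
\[
  |I|^{p-1}\left(\frac{1}{|I|}\int_{\mathbb{R}^n}\frac{|I|^{p}}{\left[\ell(I)+\operatorname{dist}(x,I)\right]^{np}}d\omega\right) = \int_{\mathbb{R}^n}\frac{|I|^{p}}{\left[\ell(I)+\operatorname{dist}(x,I)\right]^{np}}d\omega\cdot|I|^{-1}\cdot|I|^{p-1}\cdot|I|\,,
\]
so that, after matching exponents carefully, the right-hand side above is exactly $\left(E_I\sigma\right)^{p-1}\cdot |I|^{p-1}\cdot\left(\text{the }\omega\text{-factor of }\mathcal{A}_p\right)^{p}$, which after cancelling powers of $|I|$ equals a constant times $\mathcal{A}_p(\sigma,\omega)^p$. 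I would carry out this bookkeeping of exponents explicitly, keeping in mind that the $\sigma$-tail factor in the definition of $\mathcal{A}_p$ is at least $E_I\sigma$ up to a dimensional constant (restricting the $\sigma$-integral to $I$, where $\ell(I)+\operatorname{dist}(x,I)\approx\ell(I)$), so that $(E_I\sigma)^{1/p'}$ is controlled by the $\sigma$-factor of $\mathcal{A}_p$.

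The only mildly delicate point — and the step I would be most careful about — is the exponent accounting that converts $|I|_\sigma^{p-1}\int\frac{d\omega}{[\cdots]^{np}}$ into the product form defining $\mathcal{A}_p$, since $\mathcal{A}_p$ has a $1/p$ power on the $\omega$-part and a $1/p'$ power on the $\sigma$-part and each integrand carries its own normalization by $|I|$. Concretely, $\mathcal{A}_p(\sigma,\omega)^p = \left(\frac{1}{|I|}\int\frac{|I|^p}{[\cdots]^{np}}d\omega\right)\left(\frac{1}{|I|}\int\frac{|I|^{p'}}{[\cdots]^{np'}}d\sigma\right)^{p/p'}$, and one checks that the $\sigma$-factor raised to the $p/p'$ power supplies precisely the $|I|_\sigma^{p-1}$-type weight needed (since $p/p' = p-1$ and $\frac{1}{|I|}\int_I d\sigma \le \frac{1}{|I|}\int\frac{|I|^{p'}}{[\cdots]^{np'}}d\sigma$ up to constants). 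Assembling these pieces gives $\mathfrak{FT}_{T,p}(\sigma,\omega)^p \lesssim \mathcal{A}_p(\sigma,\omega)^p + \mathfrak{TR}_{T,p}(\sigma,\omega)^p$, and taking $p$-th roots finishes the proof. I expect no real obstacle here beyond this routine homogeneity check; the proposition is a standard tail estimate.
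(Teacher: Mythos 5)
Your proposal is correct and follows essentially the same route as the paper: split off the $3I$ part (bounded by the triple testing constant), bound the tail via the Calder\'on--Zygmund size estimate, and recognize the result as $(E_I\sigma)^{p-1}$ times the $\omega$-tail factor of $\mathcal{A}_p$, using that the $\sigma$-tail factor dominates $E_I\sigma$ since the kernel weight is $\approx 1$ on $I$; the paper merely organizes the tail through an annular decomposition over $3^{\ell+1}Q\setminus 3^{\ell}Q$, which encodes the same pointwise comparison $|x-y|\gtrsim \ell(I)+\operatorname{dist}(x,I)$ that you invoke directly. (Your intermediate display carries a stray power of $|I|$ --- the clean identity is $|I|_\sigma^{p-1}\int \left[\ell(I)+\operatorname{dist}(x,I)\right]^{-np}d\omega=(E_I\sigma)^{p-1}\cdot\frac{1}{|I|}\int\frac{|I|^{p}}{\left[\ell(I)+\operatorname{dist}(x,I)\right]^{np}}\,d\omega$ --- but your concluding accounting via $p/p'=p-1$ is exactly right.)
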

\begin{proof}
We begin by writing
\[
  \frac{1}{\left | Q \right |_{\sigma}}\int\limits_{\mathbb{R}^n} \left | T_{\sigma} \mathbf{1}_Q (x) \right |^p d \omega(x) \leq \frac{1}{\left | Q \right |_{\sigma}}\int\limits_{3Q} \left | T_{\sigma} \mathbf{1}_Q (x) \right |^p d \omega (x) +  \frac{1}{\left | Q \right |_{\sigma}}\int\limits_{(3Q)^c} \left | T_{\sigma} \mathbf{1}_Q (x) \right |^p d \omega (x) \, . 
\]
The first term on the right is  $\lesssim \mathfrak{T}_{T,p} ^{\operatorname{triple}} (\sigma, \omega) ^p$. We show the second term is $\lesssim \mathcal{A}_{p} (\sigma, \omega)^p$. We proceed by annular decomposition and write the second term as
\[
  \frac{1}{\left | Q \right |_{\sigma}}\sum\limits_{\ell=1}^{\infty} \int\limits_{\left ( 3^{\ell+1} Q \right )  \setminus \left (  3^{\ell} Q \right )} \left | T_{\sigma} \mathbf{1}_Q (x) \right |^p d \omega (x) \leq \frac{1}{\left | Q \right |_{\sigma}}\sum\limits_{\ell=1}^{\infty} \int\limits_{\left ( 3^{\ell+1} Q \right )  \setminus \left (  3^{\ell} Q \right )}\left ( \int\limits_{Q}  \left | K(x,y) \right | d \sigma (y)\right )^p d \omega (x) \, . 
\]
By the Calder\'on-Zygmund size estimate \eqref{sizeandsmoothness'} on $K$, this last term is  
 \[
 \lesssim \frac{1}{\left | Q \right |_{\sigma}}\sum\limits_{\ell=1}^{\infty} \int\limits_{\left ( 3^{\ell+1} Q \right )  \setminus \left (  3^{\ell} Q \right )}\left ( \int\limits_{Q}  \frac{1}{\left |x-y \right |^n} d \sigma (y)\right )^p d \omega (x) \approx \left |Q \right |_{\sigma} ^{p-1} \sum\limits_{\ell=1}^{\infty}  \frac{ 1 }{\left |3^{\ell} Q \right |^p}  \int\limits_{\left ( 3^{\ell+1} Q \right )  \setminus \left (  3^{\ell} Q \right )} d \omega (x) 
 \]
 \[
  \approx \left |Q \right |_{\sigma} ^{p-1} \sum\limits_{\ell=1}^{\infty}    \int\limits_{\left ( 3^{\ell+1} Q \right )  \setminus \left (  3^{\ell} Q \right )} \frac{1}{\left [\ell(Q)+ \operatorname{dist}(x,Q) \right ]^{np}} d \omega (x) 
  \approx \left (E_Q \sigma \right ) ^{p-1}    \left ( \frac{1}{|Q|} \int\limits_{(3 Q)^c } \frac{|Q|^p}{\left [\ell(Q)+ \operatorname{dist}(x,Q) \right ]^{np}} d \omega (x) \right ) \, , 
  \]
  which is at most $\mathcal{A}_p (\sigma, \omega)^p$.
\end{proof}

The following proposition assumes for simplicity that $\sigma$ and $\omega$ have doubling constant close to Lebesgue measure. However, the result extends to any pair $(\sigma, \omega)$ of doubling measures, or even any pair of reverse doubling measures. See Proposition \ref{prop:reverse_doubling_tailed_Ap} in Appendix \ref{section:rev_doub_tailed_Ap}, which is based on the proof of \cite[Theorem 4]{Gr}.

\begin{proposition}[{\cite[Theorem 4]{Gr}}]\label{prop:small_doubling_tailed_Ap}
  Let $1<p<\infty$. There exists $\epsilon =\epsilon (p) \in \left (0, \frac{1}{2} \right )$ such that, if $\sigma$ and $\omega$ are doubling measures with doubling constant at most $2^{n(1+ \epsilon)}$, then 
  \[
    \mathcal{A}_{p} (\sigma, \omega) \lesssim A_p (\sigma, \omega) \, .
  \]
\end{proposition}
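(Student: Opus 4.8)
The plan is to show that when the doubling ratios of $\sigma$ and $\omega$ are sufficiently small, the ``tails'' in the two-tailed characteristic $\mathcal{A}_p(\sigma,\omega)$ contribute no more than the single-scale average, so the tailed characteristic collapses to the ordinary $A_p$ characteristic. Fix a cube $Q$ (for $n=2$, so a square, but the argument is dimension-free). I would split each of the two tail integrals in $\mathcal{A}_p(\sigma,\omega)$ into an annular sum over dyadic dilates $2^{k+1}Q\setminus 2^kQ$, $k\geq 0$, plus the central piece over $Q$ itself. On the annulus indexed by $k$ one has $\ell(Q)+\operatorname{dist}(x,Q)\approx 2^k\ell(Q)$, so the $\omega$-tail integral is comparable to $\sum_{k\geq 0}2^{-knp}|Q|^p\,|2^{k+1}Q|_\omega/|Q|$, and similarly for the $\sigma$-tail with exponent $p'$.

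Next I would use $\tau$-doubling to control $|2^{k+1}Q|_\omega$ in terms of $|Q|_\omega$. If $\omega$ is $\tau$-doubling then its (geometric, full) doubling constant satisfies $C_\omega^{\operatorname{doub}}=2^n+o_{\tau\to 0}(1)$, so $|2^{k+1}Q|_\omega\leq (C_\omega^{\operatorname{doub}})^{k+1}|Q|_\omega\leq (2^n+\epsilon(\tau))^{k+1}|Q|_\omega$ with $\epsilon(\tau)\to 0$. Plugging this in, the $\omega$-tail sum is bounded by $E_Q\omega$ times $\sum_{k\geq 0}(2^n+\epsilon(\tau))^{k+1}2^{-knp}$, which is a convergent geometric series as soon as $2^n+\epsilon(\tau)<2^{np}$, i.e. as soon as $n<np$, which holds for every $p>1$; choosing $\tau=\tau(p,n)$ small enough makes the ratio $(2^n+\epsilon(\tau))2^{-np}$ bounded away from $1$ and hence the series $\lesssim_p 1$. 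The same reasoning gives the $\sigma$-tail $\lesssim_p E_Q\sigma$ using the exponent $np'>n$. Multiplying, $\left(\tfrac1{|Q|}\int \tfrac{|Q|^p}{[\ell(Q)+\operatorname{dist}(x,Q)]^{np}}d\omega\right)^{1/p}\left(\tfrac1{|Q|}\int \tfrac{|Q|^{p'}}{[\ell(Q)+\operatorname{dist}(x,Q)]^{np'}}d\sigma\right)^{1/p'}\lesssim_p (E_Q\omega)^{1/p}(E_Q\sigma)^{1/p'}\leq A_p(\sigma,\omega)$, and taking the supremum over $Q$ finishes the proof.

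The only point requiring a little care — and the one I would flag as the main obstacle — is the interface between the \emph{dyadic} $\tau$-doubling hypothesis (a bound on the ratio of $\mu$-masses of sibling cubes in a dyadic grid) and the \emph{geometric} doubling needed to estimate $|2^{k+1}Q|_\mu/|Q|_\mu$ for an \emph{arbitrary} cube $Q$, not one aligned with the grid. This is exactly the passage recorded in the excerpt as $|C_u^{\operatorname{doub}}-2^n|=o_{\tau\to 0}(1)$: a $\tau$-doubling measure on a grid is genuinely (geometrically) doubling with constant tending to the Lebesgue value, since covering an arbitrary cube and its dilate by $O_n(1)$ grid cubes of comparable size and chaining sibling estimates costs only a $\tau$-dependent factor. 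Once that standard fact is invoked, the rest is the elementary geometric-series estimate above. (For non-aligned $Q$ one can alternatively compare $\mathcal{A}_p$ over arbitrary cubes with $\mathcal{A}_p$ over dyadic cubes in a finite family of shifted grids, at the cost of a dimensional constant, and then run the annular sum inside each grid.) I should also note that the implicit constant in $\mathcal{A}_p(\sigma,\omega)\lesssim A_p(\sigma,\omega)$ depends only on $p$ and $n$ once $\tau$ is fixed as $\tau(p)$ (with $n=2$ here), consistent with the conventions of Section \ref{subsection:tilde}.
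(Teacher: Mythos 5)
Your proposal is correct and follows essentially the same route as the paper's proof: an annular decomposition of each tail over the dilates $2^{k+1}Q\setminus 2^{k}Q$, combined with the observation that $\tau$-doubling with $\tau=\tau(p)$ small forces the geometric doubling constant close to $2^{n}$ (hence below $2^{np}$ and $2^{np'}$), so each tail sums to $\lesssim E_{Q}\omega$ (resp.\ $E_{Q}\sigma$) by a convergent geometric series, exactly as in the paper where one takes $\left\vert 2I\right\vert_{\mu}\leq 2^{n(1+\frac{p-1}{2})}\left\vert I\right\vert_{\mu}$. (Only a cosmetic slip: your displayed annular sum carries a stray factor $\left\vert Q\right\vert^{p}$, but the subsequent bound by $E_{Q}\omega$ times the geometric series is the correct computation.)
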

\begin{proof}
  It suffices to show that if $\mu$ has doubling constant at most $2^{n(1+\epsilon)}$ for $\epsilon$ small enough, then for all cubes $Q$ we have
  \[
     \frac{1}{|Q|} \int \frac{|Q|^p}{\left [\ell(Q)+ \operatorname{dist}(x,Q) \right ]^{np}} d \mu (x)  \lesssim E_Q \mu \, .
  \]
  This follows from an annular decomposition and the doubling property. Indeed, the left side splits as
  \begin{align*}
   &\frac{1}{|Q|} \int\limits_{2 Q} \frac{|Q|^p}{\left [\ell(Q)+ \operatorname{dist}(x,Q) \right ]^{np}} d \mu (x) + \frac{1}{|Q|} \sum\limits_{\ell=1}^{\infty} \int\limits_{2^{\ell+1} Q \setminus 2^{\ell} Q} \frac{|Q|^p}{\left [\ell(Q)+ \operatorname{dist}(x,Q) \right ]^{np}} d \mu (x) \\ 
  \approx &E_Q \mu  + \frac{1}{|Q|} \sum\limits_{\ell=1}^{\infty} 2^{-\ell n p} \left | 2^{\ell+1} Q \right |_{\mu} \leq E_{Q} \mu + \frac{1}{|Q|} \sum\limits_{\ell=1}^{\infty} 2^{n \ell (\epsilon + 1 - p) } \left |  Q \right |_{\mu}   \, .
\end{align*}
If $\epsilon \equiv \frac{p-1}{2} $, then the geometric sum converges and this is  $\lesssim E_{Q} \mu $.
\end{proof}

\section{Proof of Theorem \ref{thm:new_testing_thm_Lp_vector}}\label{section:positive_result}
As mentioned in Section \ref{subsection:tilde}, in this section the implicit constant in $\approx$ and $\lesssim$ will only depend on $p$, the doubling constants of $\sigma$ and $\omega$, and the Calder\'on-Zygmund size, smoothness and ellipticity constants of $T$.

Given a cube $K$, define $\mathcal{D} (K)$ to be the set of all dyadic subcubes of $K$. And given $t \geq 0$, define $\mathcal{D}_t (K)$ to be all dyadic subcubes of $K$ of sidelength $2^{-t } \ell (K)$. Recall that each dyadic cube $I \in \mathcal{D}$ has a parent $\pi I = \pi_{\mathcal{D}} I$ in $\mathcal{D}$, and similarly for $\mathcal{D} (K)$.
The quadratic weak boundedness property of Hyt\"onen-Vuorinen for a measure pair $(\sigma, \omega)$ and an operator $T$ is given by the inequality
  \begin{equation} \label{eq:wbp_quadratic}
	  \sum\limits_{J} \left | \int\limits_{\mathbb{R}^n} a_J \left ( T_{\sigma} \mathbf{1}_{J} \right ) (x) b_J \mathbf{1}_{J ^*} (x)  d \omega(x) \right |\leq C \left | \left ( \sum\limits_{J } |a_J \mathbf{1}_{J} |^2 \right )^{\frac{1}{2}} \right |_{L^p (\sigma)} \left | \left ( \sum\limits_{J} |b_J \mathbf{1}_{J ^*} |^2 \right )^{\frac{1}{2}} \right |_{L^{p'} (\omega)} \, , 
  \end{equation}
  which must hold over all countable sequences of cubes $\{J\}$, sequences $\{a_J\}_J$ and $\{b_J\}_J$, and where $J ^*$ denotes a cube adjacent to $J$ of same sidelength, meaning there exists a cube $K$ such that $J$ and $J^*$ are in $\mathcal{D}_1 (K)$. We let $\mathcal{WBP}^{\ell^2} _{T, p} (\sigma, \omega)$ denote the best constant $C$ in the above inequality. The main results of \cite{SaWi} involved the  quantity $\mathcal{WBP} ^{\ell^2} _{T, p} (\sigma, \omega)$.
\begin{theorem}[{\cite[Theorem 1, part (2)]{SaWi}} ]\label{thm:T1_thm_original_vector}
 Suppose that $\sigma, \omega$ are doubling measures on $\mathbb{R}^n$. Then
    \[
      \mathfrak{N}_{\mathbf{R},p} (\sigma, \omega) \approx \mathfrak{T}_{\mathbf{R},p} ^{\ell^2, \operatorname{local}}  (\sigma, \omega) + \mathfrak{T}_{\mathbf{R},p'} ^{\ell^2, \operatorname{local}}  (\omega, \sigma) + A_{p} ^{\ell^2, \operatorname{local}} (\sigma, \omega) + A_{p'} ^{\ell^2, \operatorname{local}} (\omega, \sigma) + \mathcal{WBP}_{\mathbf{R}, p} ^{\ell^2} (\sigma, \omega) \, . 
    \]
\end{theorem}
We show that for doubling measures, the weak boundedness property follows from the  $A_{p} ^{\ell^2, \operatorname{offset}}$ condition.
  \begin{lemma}\label{lem:wbp_doubling}
    Let $\sigma$ and $\omega$ be doubling measures on $\mathbb{R}^n$ and let $T$ be a Calder\'on-Zygmund operator. Then   \[
      \mathcal{WBP}_{T,p}^{\ell^2} (\sigma, \omega) \lesssim A_{p'} ^{\ell^2, \operatorname{offset}} (\omega, \sigma ; 1) \, .
    \]
	  The upper-bound can also be replaced by   $A_p ^{\ell^2, \operatorname{local}} (\sigma, \omega)$ or $A_{p'} ^{\ell^2, \operatorname{local}} (\omega, \sigma)$ \, .
  \end{lemma}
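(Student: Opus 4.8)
The plan is to bound each term of the sum in \eqref{eq:wbp_quadratic} using only the Calder\'on--Zygmund \emph{size} estimate on $K$ together with the doubling hypothesis, so that the quadratic weak-boundedness constant is dominated by an offset quadratic Muckenhoupt constant. First I would fix a cube $J$ and its neighbour $J^*$, both dyadic children of a common cube $K$ with $\ell(K) = 2\ell(J)$, and estimate $T_\sigma \mathbf{1}_J$ pointwise on $J^*$. Since $J$ and $J^*$ are disjoint and of comparable size, for $x \in J^*$ and $y \in J$ one has $|x-y| \gtrsim \ell(J)$, so the size bound \eqref{sizeandsmoothness'} (with $\lambda = 0$) gives
\[
  \left| T_\sigma \mathbf{1}_J(x) \right| \le \int_J |K(x,y)|\, d\sigma(y) \lesssim \int_J \frac{d\sigma(y)}{|x-y|^n} \lesssim \frac{|J|_\sigma}{\ell(J)^n} \approx E_J\sigma \, , \qquad x \in J^* \, .
\]
Hence $\left| a_J (T_\sigma \mathbf{1}_J)(x)\, b_J \mathbf{1}_{J^*}(x) \right| \lesssim |a_J| (E_J\sigma)\, |b_J|\, \mathbf{1}_{J^*}(x)$, and integrating in $d\omega$ and summing over $J$ reduces the left side of \eqref{eq:wbp_quadratic} to controlling $\sum_J |a_J|\,|b_J|\, (E_J\sigma)\, |J^*|_\omega$.

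The next step is to pass from this scalar bilinear sum to the $\ell^2$-type norms appearing on the right of \eqref{eq:wbp_quadratic}. I would write $(E_J\sigma)\,|J^*|_\omega = (E_J \sigma)(E_{J^*}\omega)|J^*|$ and split this as a product of a ``$\sigma$-factor'' and an ``$\omega$-factor''. Using the comparability $E_J\sigma \approx E_{J^*}\sigma$ and $E_{J^*}\omega \approx E_J\omega$ afforded by the doubling of $\sigma$ and $\omega$ (recall $J,J^*$ share the parent $K$), one can reorganize and apply Cauchy--Schwarz in the summation index $J$:
\[
  \sum_J |a_J|\,|b_J|\,(E_J\sigma)\,|J^*|_\omega \lesssim \left( \sum_J |a_J|^2 (E_{J^*}\sigma)^2 |J^*|_\omega \right)^{1/2} \left( \sum_J |b_J|^2 |J^*|_\omega \right)^{1/2} \, .
\]
The first factor is (the square root of) $\int \sum_J a_J^2 (E_{J^*}\sigma)^2 \mathbf{1}_{J^*}\, d\omega$, which by the pointwise-bounded-overlap... actually, more carefully: I would instead recognize that $\int \bigl(\sum_J a_J^2 (E_{J^*}\sigma)^2 \mathbf{1}_{J^*}\bigr)^{p/2} d\omega$ is what the offset condition \eqref{forward_Ap_offset} controls, via H\"older. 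Concretely, the cleanest route is: by H\"older's inequality in the $d\omega$-integral with exponents $p/2$ and $(p/2)'$ applied after Cauchy--Schwarz \emph{at each point} $x$ in the sum over $J$, one gets
\[
  \sum_J \int_{J^*} |a_J| (E_J\sigma) |b_J|\, d\omega \le \left\| \Bigl(\sum_J a_J^2 (E_{J^*}\sigma)^2 \mathbf{1}_{J^*}\Bigr)^{1/2} \right\|_{L^p(\omega)} \left\| \Bigl(\sum_J b_J^2 \mathbf{1}_{J^*}\Bigr)^{1/2} \right\|_{L^{p'}(\omega)} \, ,
\]
after using $E_J \sigma \approx E_{J^*}\sigma$. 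The second norm on the right is comparable to $\bigl\| (\sum_J b_J^2 \mathbf{1}_{J^*})^{1/2} \bigr\|_{L^{p'}(\omega)}$, and since $J^*$ is an offset of $J$, a maximal-function / Fefferman--Stein argument (Lemma~\ref{lem:FS}\eqref{lem:FS_doubling}) together with $b_J^2 \mathbf{1}_{J^*} \lesssim (M_\omega b_J \mathbf{1}_J)^2 \mathbf{1}_{J^*}$ bounds it by $\bigl\| (\sum_J b_J^2 \mathbf{1}_J)^{1/2}\bigr\|_{L^{p'}(\omega)}$, as required on the right of \eqref{eq:wbp_quadratic}.

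Finally, the first norm is exactly bounded by $A_{p'}^{\ell^2,\operatorname{offset}}(\omega,\sigma)$ after dualizing: by the duality of the quadratic conditions (the argument in the proof of Lemma~\ref{lem:duality_quadratic}, or by inspecting \eqref{forward_Ap_offset} in its dual form), $\bigl\|(\sum_J a_J^2 (E_{J^*}\sigma)^2 \mathbf{1}_{J^*})^{1/2}\bigr\|_{L^p(\omega)} \lesssim A_{p'}^{\ell^2,\operatorname{offset}}(\omega,\sigma) \bigl\|(\sum_J a_J^2 \mathbf{1}_J)^{1/2}\bigr\|_{L^p(\sigma)}$, using doubling once more to replace $\mathbf{1}_{J^*}$ by $\mathbf{1}_J$ and $E_{J^*}\sigma$ suitably. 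Combining the three estimates yields $\mathcal{WBP}_{T,p}^{\ell^2}(\sigma,\omega) \lesssim A_{p'}^{\ell^2,\operatorname{offset}}(\omega,\sigma)$, and by the equivalence $A_p^{\ell^2,\operatorname{local}} \approx A_p^{\ell^2,\operatorname{offset}}$ for doubling pairs (stated just before \eqref{forward_Ap_offset}) together with the duality Lemma~\ref{lem:duality_quadratic}\eqref{lem:duality_quadratic_doubling}, the bound may equally be written with any of $A_p^{\ell^2,\operatorname{offset}}(\sigma,\omega)$, $A_p^{\ell^2,\operatorname{local}}(\sigma,\omega)$ or $A_{p'}^{\ell^2,\operatorname{local}}(\omega,\sigma)$. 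The main obstacle I anticipate is the bookkeeping in the second paragraph: one must decide whether to ``attach'' the factor $E_J\sigma$ to the $\sigma$-side or the $\omega$-side before splitting, and route the offset/doubling comparabilities so that the final answer lands on the quadratic (rather than scalar) Muckenhoupt constant — this is precisely where the pointwise Cauchy--Schwarz in $J$ followed by H\"older in the integral, rather than a naive term-by-term estimate, is essential.
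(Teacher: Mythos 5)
There is a genuine gap at the very first step. In the quadratic weak boundedness property \eqref{eq:wbp_quadratic} the cubes $J$ and $J^*$ are \emph{adjacent} children of a common parent $K$ — they share a boundary face — so for $x \in J^*$ and $y \in J$ you do \emph{not} have $|x-y| \gtrsim \ell(J)$, and the pointwise bound $|T_\sigma \mathbf{1}_J(x)| \lesssim E_J\sigma$ on $J^*$ fails near the common face (already for Lebesgue measure the integral $\int_J |x-y|^{-n}\,dy$ blows up logarithmically as $x$ approaches $\partial J$). Worse, even the integrated reduction you rely on, $\int_{J^*}\int_J |x-y|^{-n}\,d\sigma(y)\,d\omega(x) \lesssim (E_J\sigma)\,|J^*|_\omega$, is false for doubling pairs: on $\mathbb{R}$ take $d\sigma = |x|^{\alpha-1}dx$, $d\omega = |x|^{\beta-1}dx$ with $\alpha+\beta<1$, $J=[0,1)$, $J^*=[-1,0)$; the double integral diverges while $(E_J\sigma)|J^*|_\omega$ is finite. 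So the whole scheme of first collapsing each term to $|a_J||b_J|(E_J\sigma)|J^*|_\omega$ cannot work; the near-diagonal singularity must be resolved before any scalar bound is taken.

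This is exactly what the paper's proof does and what your plan is missing: it performs a Whitney-type decomposition of the product cube $J^*\times J$ away from the diagonal $\{x=y\}$, producing countably many pairwise disjoint pairs $(J_k, J_k^*)$ of equal sidelength with $\operatorname{dist}(J_k,J_k^*)\approx \ell(J_k)$, and proves (using doubling to pass from the Whitney pieces to genuine product cubes $\mathbf{K}_{\mathbf{I}}$) the key inequality \eqref{eq:wbp_doubling_single_1D},
\[
  \int\int \frac{\mathbf{1}_{J}(y)\,\mathbf{1}_{J^*}(x)}{|x-y|^n}\,d\sigma(y)\,d\omega(x) \lesssim \sum_{k} \int \mathbf{1}_{J_k^*}(x)\,\bigl(E_{J_k}\sigma\bigr)\,d\omega(x)\, .
\]
Only after this does one run the steps you describe in your second half — pointwise Cauchy--Schwarz in the summation index, H\"older in the integral, the offset condition \eqref{forward_Ap_offset} applied to the \emph{family} $\{(J_k, J_k^*)\}$, and the equivalences between offset and local quadratic conditions via doubling, Lemma \ref{lem:FS} and Lemma \ref{lem:duality_quadratic} — and that part of your proposal does match the paper's argument. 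To repair your proof, insert the diagonal Whitney decomposition (or an equivalent dyadic splitting of $J$ and $J^*$ into separated strips at all scales) and carry the resulting countable family through the quadratic estimates rather than a single pair per $J$.
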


\begin{remark}
    In Lemma \ref{lem:wbp_doubling}, one can also show
    \[
    \mathcal{WBP}_{T, p} ^{\ell^2} (\sigma, \omega) \lesssim A_p ^{\ell^2, \operatorname{offset}} (\sigma, \omega; C_0 ) \, ,
    \]
    where now the implicit constant depends on $p$, the doubling constants of $\sigma$ and $\omega$, and $C_0$. Indeed, it suffices to take Lemma \ref{lem:wbp_doubling} with the local condition on the right side, and then apply \eqref{eq:local_equiv_offset_Muck} with arbitrary $C_0$.
\end{remark}
  
Theorem \ref{thm:new_testing_thm_Lp_vector} then follows immediately from Theorem  \ref{thm:T1_thm_original_vector} and Lemma \ref{lem:wbp_doubling}.

  \begin{proof}[Proof of Lemma \ref{lem:wbp_doubling}]
    To see that the last statement holds, it suffices to apply \eqref{eq:local_equiv_offset_Muck} and Lemma \ref{lem:duality_quadratic}.  

	  We turn to the first statement, where we abbreviate $A_{p'} ^{\ell^2, \operatorname{offset}}(\omega, \sigma; 1)$ by $A_{p'} ^{\ell^2, \operatorname{offset}}(\omega, \sigma)$. It suffices to show that given adjacent cubes $J$ and $J^*$ of equal sidelength, there exist pairwise disjoint cubes $\{J_{k}\}_k$ and $\{J_{ k} ^* \}$, which are subcubes of $J$ and $J ^*$ with $\operatorname{dist} (J_{k} , J_{ k} ^*)\approx \ell(J_{k})$ and $\ell(J_{k}) = \ell(J_{k} ^*)$, such that 
    \begin{align}\label{eq:wbp_doubling_single_1D}
  \int \int \frac{\mathbf{1}_{J} (y) \mathbf{1}_{J^*} (x)}{|x-y|^n} d \sigma (y) d \omega(x) \lesssim \sum\limits_{k =0}^{\infty} \int \mathbf{1}_{J_{ k} ^*} (x) \left ( E_{J_{k} } \sigma \right )   d \omega (x)  \, .
\end{align}
Indeed, given the above decomposition and \eqref{eq:wbp_doubling_single_1D}, we may then write
\begin{align*}
  \sum\limits_J \left |a_J b_J \int\limits_{\mathbb{R}} \left ( T_{\sigma} \mathbf{1}_{J}  \right )(x) \mathbf{1}_{J ^*} (x) d \omega(x) \right | &\lesssim \sum\limits_J \left |a_J \right | \left | b_J \right |  \int \int \frac{\mathbf{1}_{J} (y) \mathbf{1}_{J^*} (x)}{|x-y|^n} d \sigma (y) d \omega(x) \\
  &\lesssim \int \sum\limits_{J, k} |a_J| |b_J|  \mathbf{1}_{J_{ k} ^*} (x) \left ( E_{J_{ k} } \sigma \right )d \omega (x)  \, ,
\end{align*}
which, by applying Cauchy-Schwarz in $\sum\limits_{J,k}$ and then H\"older's inequality in $\int$, is at most 
\begin{align*}
	&\left \| \left \{ \sum\limits_{J, k} |a_J|^2 \left ( E_{J_{ k} } \sigma \right )^2 \mathbf{1}_{J_{ k} ^*} \right \}^{\frac{1}{2}} \right \|_{L^p (\omega )}  \left \| \left \{ \sum\limits_{J, k} |b_J|^2 \mathbf{1}_{J_{ k} ^*}  \right \}^{\frac{1}{2}} \right \|_{L^{p'} (\omega )} \\
	\lesssim &A_{p} ^{\ell^2, \operatorname{offset}} (\sigma, \omega) \left \| \left \{ \sum\limits_{J, k} |a_J|^2 \mathbf{1}_{J_{k}} \right \}^{\frac{1}{2}} \right \|_{L^p (\sigma )}  \left \| \left \{ \sum\limits_{J, k} |b_J|^2 \mathbf{1}_{J_{ k} ^*} \right \}^{\frac{1}{2}} \right \|_{L^{p'} (\omega )} \\
	\leq &A_{p} ^{\ell^2, \operatorname{offset}} (\sigma, \omega) \left \| \left \{ \sum\limits_{J} |a_J|^2 \mathbf{1}_{J} \right \} ^{\frac{1}{2}} \right \|_{L^p (\sigma )}  \left \| \left \{ \sum\limits_{J} |b_J|^2 \mathbf{1}_{J ^*} \right \}^{\frac{1}{2}} \right \|_{L^{p'} (\omega)} \, .
\end{align*}
Thus \eqref{eq:wbp_doubling_single_1D} implies $\mathcal{WBP}_{T, p} ^{\ell^2} (\sigma, \omega) \lesssim A_{p} ^{\ell^2, \operatorname{offset}} (\sigma, \omega)$. 

\begin{figure}[ht]
  \fbox{\includegraphics[width=0.5\linewidth]{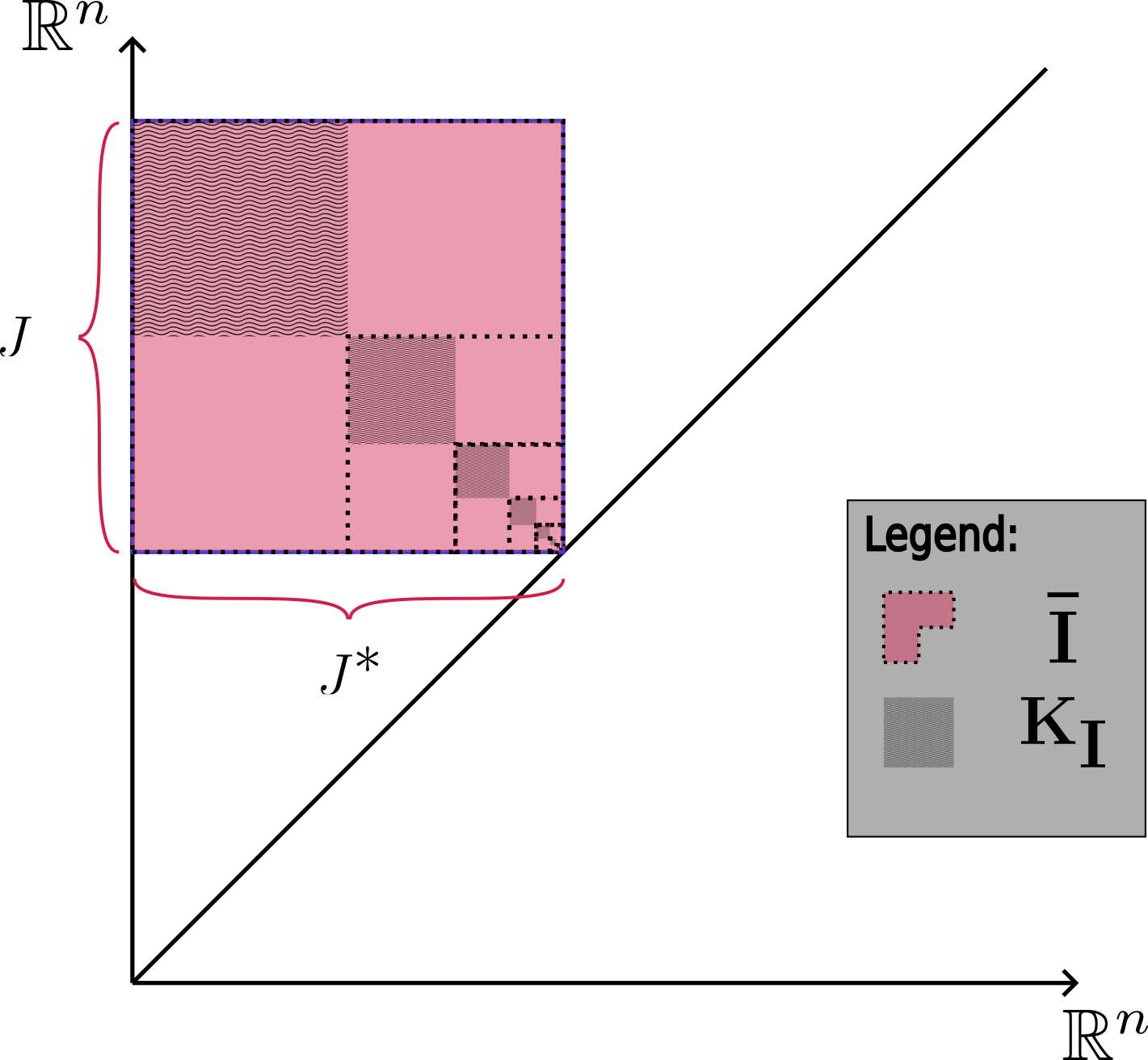}}
	\caption{Picture of $\bar{\mathbf{I}}$ and $\mathbf{K}_{\mathbf{I}}$}
\label{fig:Whitney_cubes}
\end{figure}
	  To show \eqref{eq:wbp_doubling_single_1D}, we will select the cubes $\{J_k\}$ and $\{J_k ^*\}$ from a dyadic Whitney decomposition of 
      \[
      \mathbf{J} \equiv J^* \times J
      \] in the product space $\mathbb{R}^n \times \mathbb{R}^n$ away from the diagonal $D \equiv \{ (x,y) \in \mathbb{R}^{2n}  ~:~ x=y \}$; such a decomposition was used in \cite[Lemma 2.11]{LaSaUr} to prove $A_2 (\sigma, \omega) \lesssim A_2 ^{\operatorname{offset}}(\sigma, \omega)$. More precisely, let $\mathcal{C}$ denote the dyadic subcubes of $\mathbf{J}$ whose boundary intersects the diagonal $D$.
	  Then $\mathcal{C}$ is a grid with natural tree structure and natural parent operator $\pi_{\mathcal{C}}$ inherited from $\mathcal{D} \left ( \mathbf{J} \right )$. We denote the elements of $\mathcal{C}$ by $\mathbf{I} = I^* \times I \subset \mathbb{R}^n \times \mathbb{R}^n$. We next note that $\mathbf{J}$ may be decomposed into the disjoint sets $\bigcup\limits_{\mathbf{I} \in \mathcal{C}} \bar{\mathbf{I}}$, where 
      \[
      \bar{\mathbf{I}} \equiv \mathbf{I} \setminus \bigcup\limits_{\mathbf{K} \in \mathcal{C} ~:~ \mathbf{K} \subsetneq \mathbf{I}} \mathbf{K}
      \]
      is obtained by taking $\mathbf{I} \in \mathcal{C}$ and removing the portions of $\mathbf{I}$ that intersect with other cubes of $\mathcal{C}$ with strictly smaller sidelength. We also note that all points in $\bar{\mathbf{I}}$ are distance $\approx \ell \left (\mathbf{I} \right )$ away from $D$. Thus 
\[
	\int \int \frac{\mathbf{1}_{J} (y) \mathbf{1}_{J^*} (x)}{|x-y|^n} d \sigma (y) d \omega(x) \approx \sum\limits_{\mathbf{I} \in \mathcal{C}} \frac{\left | \bar{\mathbf{I}} \right |_{\omega \otimes \sigma}}{\sqrt{\left |\mathbf{I} \right |}} \, . 
\]
	  Recalling $\mathbf{I} = I ^*  \times I \subset \mathbb{R}^n \times \mathbb{R}^n$, we see that there is a choice of cube 
\[
	\mathbf{K}_{\mathbf{I}} = K_{\mathbf{I}} ^* \times K_{\mathbf{I}}  \, , \text{ satisfying } K_{\mathbf{I}} ^* \in \mathcal{D}_1 \left (I^* \right) \text{ and } K_{\mathbf{I}} \in \mathcal{D}_1 \left (I \right) \, ,
\]
such that the strips
\[
	K_{\mathbf{I}} ^* \times \mathbb{R}^n \, , \qquad \mathbb{R}^n \times K_{\mathbf{I}} \, ,
\]
	  do not intersect any $\mathcal{C}$-descendants of $\mathbf{I}$; see, e.g., Figure \ref{fig:Whitney_cubes}. Then by doubling we have
\[
	\sum\limits_{\mathbf{I} \in \mathcal{C}} \frac{\left | \bar{\mathbf{I}} \right |_{\omega \otimes \sigma}}{\sqrt{\left |\mathbf{I} \right |}} \approx \sum\limits_{\mathbf{I} \in \mathcal{C}} \frac{\left | \mathbf{K}_{\mathbf{I}} \right |_{\omega \otimes \sigma}}{\sqrt{\left |\mathbf{K}_{\mathbf{I}} \right |}} = \sum\limits_{\mathbf{I} \in \mathcal{C}} \int \mathbf{1}_{K_{\mathbf{I}} ^*} (x)  E_{K_{\mathbf{I}}} \left ( \sigma \right ) d \omega(x) \, , 
\]
which now matches the format of the right side of \eqref{eq:wbp_doubling_single_1D}. 

	  Notice that $\{K_{\mathbf{I}} ^*\}$ and $\{K_{\mathbf{I}} \}$ are disjoint subcollections of cubes within $\mathcal{D} \left (J^* \right)$ and $\mathcal{D} \left (J \right )$, respectively. Since $\mathbf{K}_{\mathbf{I}}$ is a cube, then $\ell \left ( K_{\mathbf{I}} ^* \right ) =  \ell \left ( K_{\mathbf{I}}  \right )$, which are both $\approx \operatorname{dist} \left (K_{\mathbf{I}} ^*, K_{\mathbf{I}} \right )$ by    nature of the Whitney decomposition. Re-indexing the collections $\{K_{\mathbf{I}} ^*\}$ and $\{K_{\mathbf{I}} \}$ yields the sought-after collections $\{J_k ^* \}$ and $\{J_k \}$ respectively. 
\end{proof}

 Theorem \ref{thm:T1_thm_original_vector} is actually the case $\lambda = 0$ of the $\lambda$-fractional result in \cite[Theorem 1, part (2)]{SaWi}. Lemma \ref{lem:wbp_doubling} easily extends to the $\lambda$-fractional case, and hence the $\lambda$-fractional analogue of Theorems \ref{thm:new_testing_thm_Lp_vector} and \ref{thm:new_testing_thm_Lp} holds, provided $\lambda \neq 1$. See also \cite{LaWi} for similar limitations, and see \cite{SaShUr9} where the case $\lambda=1$ can be handled by the proof strategies there by differentiating $\ln |x|$ instead of $|x|^0$ in \cite[(54)]{SaShUr9} when $\beta =0$.

  \section{A weight pair on \texorpdfstring{$[0,1)$}{[0,1)} which satisfies \texorpdfstring{$A_p$}{Ap} but fails \texorpdfstring{$A_{p} ^{\ell^2, \operatorname{local}}$}{quadratic Ap}}\label{section:Eric_example_weights}

Given a collection of cubes $\mathcal{S}$ closed under the operation of taking dyadic children, a measure $\mu$ is \emph{doubling on $\mathcal{S}$} if 
  \[
	  \sup\limits_{J \in \mathcal{S}} \sup\limits_{J_1, J_2 ~:~ \pi J_i = J} \frac{|J_1|_{\mu}}{|J_2|_{\mu}} < \infty \, . 
\]
We call the left side the \emph{doubling ratio of $\mu$ over $\mathcal{S}$}.
Our interest is when $\mathcal{S}$ equals  
 $\mathcal{D}(I)$ or $\mathcal{D}$, in which case we will also refer to the doubling ratio over $\mathcal{S}$ as the \emph{dyadic doubling ratio}.

Given a collection of cubes $\mathcal{S}$ and two measures $\sigma$ and $\omega$, define the Muckenhoupt characteristic over $\mathcal{S}$
\[
  A_p ^{\mathcal{S}} (\sigma, \omega) \equiv \sup\limits_{Q \in \mathcal{S}} \left ( E_Q \sigma \right )^{\frac{1}{p'}} \left ( E_Q \omega \right ) ^{\frac{1}{p}} \, , 
\]
We will also need to keep track of the step size of step functions  and whether they adhere to a grid.
\begin{definition}\label{def:dyadic_step_fn}
A function $f: \mathbb{R}^n \to \mathbb{C}$ is a $\mathcal{D}$-dyadic step function with step size $2^{-t}$ if  $f$ is constant on all dyadic cubes in $\mathcal{D}_t$.

Given $a \in \mathbb{R}\setminus \{0\}$ and $b \in \mathbb{R}^n$, let $\mathcal{D}'$ denote the image of the grid $\mathcal{D}$ under the affine transformation $x \mapsto ax+b$. Then $f: \mathbb{R}^n \to \mathbb{C}$ is a $\mathcal{D}'$-dyadic step function of step size $\Delta$ if $f(ax+b)$ is a $\mathcal{D}$-dyadic step function of step size $\Delta a^{-1}$. 
\end{definition}

Using the example from \cite[Section 7.1]{SaWi}, we obtain the following.

  \begin{proposition}[ {\cite[Section 7.1]{SaWi}}] \label{prop:start_ref_weights}
    Let $\Gamma > 1$, and $p \in (1, \infty) \setminus \{2\}$. There exist positive $\mathcal{D}$-dyadic step functions $\sigma, \omega$ on $[0,1) \subset \mathbb{R}$ with finite doubling ratio over $\mathcal{D}([0,1))$, such that 
    \[
      A_{p} ^{\mathcal{D}([0,1))} (\sigma, \omega) \lesssim 1 \text{ but } A_{p} ^{\ell^2, \operatorname{local}, \mathcal{D} ([0,1))} (\sigma, \omega) > \Gamma \, .
    \]
  \end{proposition}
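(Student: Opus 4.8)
The plan is to build $(\sigma,\omega)$ explicitly as step functions living along a single nested chain of dyadic intervals, tuned so that the \emph{scalar} $A_p$ product is exactly scale‑invariant at every scale (hence as small as possible) while the $\ell^2$‑aggregated quantity diverges with the length of the chain. I first handle $p>2$, recovering $1<p<2$ at the end by duality. Fix a large integer $N$, let $I_k:=[0,2^{-k})$ for $0\le k\le N$, let $R_k:=I_k\setminus I_{k+1}$ be the ``rings'' (so $R_0,\dots,R_{N-1},I_N$ tile $[0,1)$), and set $\eta:=1/N$. Let $\sigma$ be the $\mathcal{D}$‑step function of step‑size $2^{-N}$ that is constant on each $R_m$ and on $I_N$ and has $\left|I_m\right|_\sigma=2^{-mp'}$ (equivalently $E_{I_m}\sigma=2^{-m/(p-1)}$) for $0\le m\le N$; let $\omega$ be defined likewise with $\left|I_m\right|_\omega=(1-\eta)^m$. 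Both are positive, have closed support $[0,1]$, and total mass $1$.

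The scalar bound is then immediate: on each $I_m$, $\left(E_{I_m}\sigma\right)^{1/p'}\left(E_{I_m}\omega\right)^{1/p}=(1-\eta)^{m/p}\le 1$ --- the point being that $\tfrac{1}{p-1}\cdot\tfrac{1}{p'}=\tfrac1p$, so the product is exactly $1$ when $\eta=0$ --- and a one‑line density computation gives $\left(E_{R_m}\sigma\right)^{1/p'}\left(E_{R_m}\omega\right)^{1/p}\le 2(1-\eta)^{m/p}\le 2$; since every $Q\in\mathcal{D}([0,1))$ either equals some $I_k$ or lies inside a single $R_m$ or inside $I_N$ (on which $\sigma,\omega$ are constant), this yields $A_{p}^{\mathcal{D}([0,1))}(\sigma,\omega)\le 2$. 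The sibling ratios of $\sigma$ equal $2^{p'}-1$ and those of $\omega$ equal $(1-\eta)/\eta=N-1$, so the doubling ratio of the pair over $\mathcal{D}([0,1))$ is finite, of size $\approx N=:C(\Gamma)$.

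To defeat the quadratic condition, test against the chain: take $a_{I_k}:=2^{k/(p-1)}$ for $0\le k\le N$ and $a_J:=0$ otherwise, chosen so that $a_{I_k}\,E_{I_k}\sigma\equiv 1$. Then $\sum_{J\ni x}a_J^2\left(E_J\sigma\right)^2$ equals $m+1$ on $R_m$ and equals $N+1$ on $I_N$, so, keeping only the contribution of $I_N$,
\[
\Bigl\|\bigl(\textstyle\sum_J a_J^2(E_J\sigma)^2\mathbf{1}_J\bigr)^{1/2}\Bigr\|_{L^p(\omega)}^{p}\ \ge\ (N+1)^{p/2}\left|I_N\right|_\omega\ =\ (N+1)^{p/2}\bigl(1-\tfrac1N\bigr)^{N}\ \gtrsim\ N^{p/2}.
\]
On the other hand $\sum_{k\le m}a_{I_k}^2=\sum_{k\le m}\bigl(2^{2/(p-1)}\bigr)^k\lesssim 2^{mp'}$ while $\left|R_m\right|_\sigma,\left|I_N\right|_\sigma\le 2^{-mp'}$, so
\[
\Bigl\|\bigl(\textstyle\sum_J a_J^2\mathbf{1}_J\bigr)^{1/2}\Bigr\|_{L^p(\sigma)}^{p}\ \lesssim\ \sum_{m=0}^{N}2^{mp'}\cdot 2^{-mp'}\ =\ N+1.
\]
Dividing and taking $p$‑th roots, $A_{p}^{\ell^2,\operatorname{local},\mathcal{D}([0,1))}(\sigma,\omega)\gtrsim N^{1/2-1/p}=N^{(p-2)/(2p)}$, which exceeds $\Gamma$ once $N=N(p,\Gamma)$ is large; for $p=2$ both displayed quantities are of order $N^{p/2}$, so the ratio stays $\approx 1$, consistent with $A_2=A_2^{\ell^2}$. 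For $1<p<2$, run the construction with exponent $p'>2$ to get a pair $(\sigma^{\sharp},\omega^{\sharp})$ and set $(\sigma,\omega):=(\omega^{\sharp},\sigma^{\sharp})$: then $A_{p}^{\mathcal{D}([0,1))}(\omega^{\sharp},\sigma^{\sharp})=A_{p'}^{\mathcal{D}([0,1))}(\sigma^{\sharp},\omega^{\sharp})\lesssim 1$, while Lemma \ref{lem:duality_quadratic} --- applicable with $\mathcal{S}=\mathcal{D}([0,1))$ since $M_{\mu}^{\mathcal{D}([0,1))}\le M_{\mu}^{\mathcal{D}}$ and Lemma \ref{lem:FS} holds --- gives $A_{p}^{\ell^2,\operatorname{local},\mathcal{D}([0,1))}(\omega^{\sharp},\sigma^{\sharp})\approx A_{p'}^{\ell^2,\operatorname{local},\mathcal{D}([0,1))}(\sigma^{\sharp},\omega^{\sharp})>\Gamma$.

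The one delicate point is that the two requirements must hold \emph{simultaneously and uniformly over all dyadic scales}: $E_{I_m}\sigma=2^{-m/(p-1)}$ is essentially forced because it makes the two‑weight $A_p$ product exactly scale‑invariant (nothing wasted), and scalar $A_p$ then forces $\left|I_m\right|_\omega\approx 1$, which is precisely what stops the $\omega$‑mass from decaying geometrically along the chain and so lets the $\ell^2$ square function accumulate like $\sqrt m$; the \emph{same} coefficients force the competing square function to grow geometrically, keeping its $L^p(\sigma)$ mass of order $N$ rather than $N^{p/2}$. The surviving $N^{p/2}$‑versus‑$N$ discrepancy is the convexity gap of $t\mapsto t^{p/2}$, which is exactly why $p\ne 2$ is needed; everything else --- in particular checking that no off‑chain or sub‑ring cube spoils $A_{p}^{\mathcal{D}([0,1))}\lesssim 1$ --- is routine bookkeeping. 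This is, in essence, the example of \cite[Appendix]{SaWi}.
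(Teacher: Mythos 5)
Your proposal is correct, but it takes a genuinely different route from the paper. The paper does not build the weights from scratch: for $1<p<2$ it imports the continuous pair $\omega=[x(\ln\frac1x)^{\alpha}]^{p-1}$, $\sigma=\frac{1}{x(\ln\frac1x)^{1+\alpha}}$ from the appendix of \cite{SaWi}, quotes from there that $A_p\lesssim 1$ while the quadratic characteristic is infinite (tested, as you do, on the chain $I_k=[0,2^{-k})$ but with $a_k=k^{\eta}$), then truncates the extremizing sum at a finite level $M$ by monotone convergence and replaces the weights by the martingale approximations $\mathbb{E}_M\sigma,\mathbb{E}_M\omega$ to obtain step functions with finite (large) doubling ratio; the range $p>2$ is then obtained from Lemma \ref{lem:duality_quadratic}. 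You instead construct, for $p>2$, a completely explicit discrete pair with $|I_m|_{\sigma}=2^{-mp'}$ and $|I_m|_{\omega}=(1-\tfrac1N)^m$ and verify everything by hand: the scalar bound $A_p^{\mathcal{D}([0,1))}\le 2$ (your case analysis is sound, since any dyadic interval not containing $0$ that meets a ring $R_m$ is contained in it, the parent of $R_m$ being $I_m$), the doubling ratio $\approx N$, and the explicit lower bound $N^{(p-2)/(2p)}$ for the quadratic characteristic; $1<p<2$ then follows by the same duality lemma used in the opposite direction. Your approach buys self-containedness and quantitativeness: no appeal to the \cite{SaWi} computation, no truncation or martingale-approximation step (the weights are already $\mathcal{D}$-step functions of step size $2^{-N}$), and a transparent display of why the convexity gap of $t\mapsto t^{p/2}$ forces $p\neq 2$; the paper's route buys brevity by outsourcing the analytic work. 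One bookkeeping slip to fix: the prose bound $\sum_{k\le m}a_{I_k}^2\lesssim 2^{mp'}$ is true but is not what your display uses — the sum is $\approx 2^{2m/(p-1)}$, and it is its $p/2$-th power, $\approx 2^{mp'}$, that multiplies $|R_m|_{\sigma}\approx 2^{-mp'}$ to give $O(1)$ per ring and hence the stated total $N+1$; with that exponent corrected the argument is complete.
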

  \begin{proof}
    We prove the case when $1<p<2$; the case $2<p< \infty$ then follows from the duality Lemma \ref{lem:duality_quadratic}.\footnote{To apply Lemma \ref{lem:duality_quadratic}, use the fact that $M_{\mu} ^{\mathcal{D} ([0,1))} f \leq M_{\mu} ^{\mathcal{D}} f$ and the Fefferman-Stein inequality Lemma \ref{lem:FS} to get $M_{\sigma} ^{\mathcal{D} ([0,1))}: L^p ( \ell^2, \sigma ) \to L^p (\ell^2, \sigma )$ and $M_{\omega} ^{\mathcal{D} ([0,1))}: L^{p'} (\ell^2, \omega) \to L^{p'} (\ell^2, \omega)$.} 
    Following the exact computations as in \cite[Section 7.1]{SaWi}, fix $\alpha \in (0,1)$ and consider 
    \begin{align*}
      \omega (x) \equiv \left [ x \left ( \ln \frac{1}{x} \right)^{\alpha} \right ] ^{p-1} \mathbf{1}_{[0, \frac{1}{2})} (x) \, , \qquad \sigma (x) \equiv \frac{1}{ x \left ( \ln \frac{1}{x} \right)^{1+\alpha} } \mathbf{1}_{[0, \frac{1}{2})} (x) \, .
    \end{align*}
    While $\omega$ and $\sigma$ fail to be dyadically doubling on $\mathcal{D}([0,\frac 1 2))$, the computations in  \cite[Section 7.1, middle of p.72 till middle of p.75]{SaWi} show that 
    \begin{equation}\label{eq:scalar_Ap_vs_quad_init}
      A_p ^{\mathcal{D} ([0, \frac 1 2))} (\sigma, \omega) \lesssim 1 \text{ and } A_{p} ^{\ell^2, \operatorname{local}, \mathcal{D}\left ( [0,\frac 1 2) \right )} (\sigma, \omega) = + \infty \, . 
    \end{equation}
  To see the last equality, set $  I_k \equiv [0, 2^{-k})$, $ a_k \equiv k^{\eta}$ for some $\eta> 0$ to be determined, and consider whether
  \begin{align}\label{eq:Ap_quad_demo}
	  \left \| \left ( \sum\limits_{k=1}^{\infty} \left | a_k \frac{|I_k|_{\sigma}}{|I_k|} \right | ^2 \mathbf{1}_{I_k} \right ) ^ \frac{1}{2}\right \|_{L^p (\omega)} \leq C \left \| \left ( \sum\limits_{k=1}^{\infty} \left | a_k  \right | ^2 \mathbf{1}_{I_k} \right )^{\frac{1}{2}}\right \|_{L^p (\sigma)} \, . 
  \end{align}
   Then as shown precisely in \cite[Section 7.1, middle of p.73 till middle of p.75]{SaWi}, the following holds:
  \begin{enumerate}
      \item the right side of \eqref{eq:Ap_quad_demo} is finite if and only if $\epsilon > 0$, where $\epsilon$ is defined by  
      \[
      2 \eta + 1 \equiv (\alpha-\epsilon) \frac{2}{p} \, .
      \]
      \item the left side of \eqref{eq:Ap_quad_demo} is infinite if 
      \[
      \eta p - \alpha  > -1 \, .
      \]
      \item the first two conditions hold simultaneously if and only if 
      \[
      0 < \epsilon < \frac{2-p}{2} \, .
      \]
  \end{enumerate}
  Taking $\epsilon \equiv \frac{2-p}{4}$ and $\{I_k\}$ and $\{a_k\}$ as above,  the left side of \eqref{eq:Ap_quad_demo} is infinite, while the right side is finite, meaning \eqref{eq:Ap_quad_demo} only holds if $C= +\infty$, i.e., \eqref{eq:scalar_Ap_vs_quad_init} holds. And then, for the same choice of $I_k$ and $a_k$ as above, by the monotone convergence theorem, there exists $M$ sufficiently large so that
  \begin{align}\label{eq:Ap_quad_demo_finite}
	  \left \| \left ( \sum\limits_{k=1}^{M} \left | a_k \frac{|I_k|_{\sigma}}{|I_k|} \right | ^2 \mathbf{1}_{I_k} \right )^{\frac{1}{2}}\right \|_{L^p (\omega)}  > \Gamma \left \| \left ( \sum\limits_{k=1}^{M} \left | a_k  \right | ^2 \mathbf{1}_{I_k} \right )^{\frac{1}{2}}\right \|_{L^p (\sigma)} \, .
  \end{align}
      To get dyadically doubling weights, fix this $M$, and now replace $\sigma$ and $\omega$ by the $M$th-level martingales $\mathbb{E}_M \sigma$ and $\mathbb{E}_{M} \omega$, where
      the $t^{\text{th}}$-level martingale approximation of a locally integrable function $f$ is the $\mathcal{D}$-dyadic step function \[
  \mathbb{E}_{t} f (x) \equiv \sum\limits_{I \in \mathcal{D}_t} \left ( E_I f \right) \mathbf{1}_{I}(x) \, , 
\]
and $\mathcal{D}_t$ denotes the dyadic cubes in $\mathcal{D}$ of sidelength $2^{-t}$. 
      This new weight pair will have a finite doubling ratio on $\mathcal{D}([0,\frac{1}{2}))$ (which diverges to infinity as $M$ diverges to infinity) and will satisfy
\[
  A_p ^{\mathcal{D}([0,\frac{1}{2}))} (\sigma, \omega) \lesssim 1 \text{ and } A_{p} ^{\ell^2, \operatorname{local}, \mathcal{D}([0,\frac{1}{2}))} (\sigma, \omega) > \Gamma \, .
\]
Finally, by rescaling, we can adapt these weights to $[0, 1)$.
\end{proof}

\section{Modifying \texorpdfstring{$\sigma$ and $\omega$}{our measures} to get dyadic doubling ratio close to 1}\label{section:KaTr}

A measure $\mu$ is \emph{$\tau$-doubling}  on a collection $\mathcal{S}$ of cubes if its doubling ratio on $\mathcal{S}$ is at most $1 + \tau$. If $\mathcal{S}$ is not specified, we take $\mathcal{S}$ to be the set of all cubes in $\mathbb{R}^n$. Note that a $\tau$-doubling measure $\mu$ has doubling constant approaching doubling constant of Lebesgue measure on $\mathbb{R}^n$ as $\tau$ approaches $0$, i.e.,
\[
  \lim\limits_{\tau \to 0} |C_{\mu} ^{\operatorname{doub}} - 2^n| = 0 \, .
\]
Our goal in this section is the following improvement of Proposition \ref{prop:start_ref_weights}, where the weights are now $\tau$-doubling.
\begin{proposition} \label{prop:weights_small_dyadic_doubling}
  Let $\Gamma \in (1, \infty)$, $\tau \in \left ( 0, \frac{1}{2} \right )$ and let $p \in (1, \infty) \setminus \{2\}$. There exist positive $\mathcal{D}$-dyadic step functions $\tilde{\sigma}, \tilde{\omega}$ on $[0,1) \subseteq \mathbb{R}$ that are $\tau$-doubling on $\mathcal{D}([0,1))$ and satisfy 
    \[
      A_{p} ^{\mathcal{D}([0,1))} (\tilde{\sigma}, \tilde{\omega}) \lesssim 1 \, , \qquad A_{p} ^{\ell^2, \operatorname{local}, \mathcal{D}([0,1))} (\tilde{\sigma}, \tilde{\omega}) > \Gamma \, .
    \]
  \end{proposition}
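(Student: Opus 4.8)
The plan is to assemble the results of this and the previous section with essentially no new work. First I would invoke Proposition \ref{prop:start_ref_weights} with the given $\Gamma$ (one may replace $\Gamma$ by $2\Gamma$ if a safety margin is wanted): this produces positive $\mathcal{D}$-dyadic step functions $\sigma,\omega$ supported on $[0,1)$ whose dyadic doubling ratio over $\mathcal{D}([0,1))$ is bounded by some finite $\Lambda = C(\Gamma)$, and which satisfy $A_p^{\mathcal{D}([0,1))}(\sigma,\omega)\lesssim 1$ while $A_p^{\ell^2,\operatorname{local},\mathcal{D}([0,1))}(\sigma,\omega) > \Gamma$.

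Next I would run the Kakaroumpas--Treil ``small-step'' construction of Section \ref{section:KaTr} on the pair $(\sigma,\omega)$ \emph{jointly}, that is, using a single step-parameter $d$ and hence a single measure-preserving map $\Phi$ for both weights. Choosing $\epsilon = \tau$ and applying Lemma \ref{lem:large_doubling_to_small} to each of $\sigma$ and $\omega$ with the common bound $\Lambda$ on their doubling ratios, we may fix $d = d(\tau,\Lambda)$ large enough that both $\widetilde{\sigma}$ and $\widetilde{\omega}$ have dyadic doubling ratio over $\mathcal{D}([0,1))$ at most $1+\tau$; thus the pair $(\widetilde{\sigma},\widetilde{\omega})$ is $\tau$-doubling. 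Lemma \ref{lem:Ap_preserved_KaTr} then yields $A_p^{\mathcal{D}([0,1))}(\widetilde{\sigma},\widetilde{\omega})\lesssim 1$, and Proposition \ref{lem:quadratic_Ap_swells_KaTr} yields $A_p^{\ell^2,\operatorname{local},\mathcal{D}([0,1))}(\widetilde{\sigma},\widetilde{\omega}) \geq A_p^{\ell^2,\operatorname{local},\mathcal{D}([0,1))}(\sigma,\omega) > \Gamma$.

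The only remaining technicality is that the transplanted weights $\widetilde{\sigma},\widetilde{\omega}$ are constant only on the corona pieces of $[0,1)$, which have varying dyadic sidelengths, so strictly speaking they are not step functions of a single fixed step size as in Definition \ref{def:dyadic_step_fn}. As in the proof of Proposition \ref{prop:start_ref_weights}, I would cure this by passing to the martingale truncations $\mathbb{E}_N\widetilde{\sigma},\mathbb{E}_N\widetilde{\omega}$ for $N$ large: on dyadic scales $\geq 2^{-N}$ these agree with $\widetilde{\sigma},\widetilde{\omega}$, so the bound $A_p^{\mathcal{D}([0,1))}\lesssim 1$ and the $\tau$-doubling property persist on those scales, while below scale $2^{-N}$ the functions are locally constant and hence have doubling ratio $1$; and since a near-extremizing test configuration for $A_p^{\ell^2,\operatorname{local},\mathcal{D}([0,1))}(\widetilde{\sigma},\widetilde{\omega})$ involves only finitely many cubes, it can be taken at scale $\geq 2^{-N}$ once $N$ is large, so the quadratic characteristic stays $> \Gamma$.

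I do not expect a genuine obstacle: the substantive content has been front-loaded into Proposition \ref{prop:start_ref_weights}, Lemma \ref{lem:large_doubling_to_small}, Lemma \ref{lem:Ap_preserved_KaTr}, and Proposition \ref{lem:quadratic_Ap_swells_KaTr}. If I had to flag the one delicate point, it is the insistence on a single $d$ (equivalently, a single map $\Phi$) for $\sigma$ and $\omega$: the estimate of Proposition \ref{lem:quadratic_Ap_swells_KaTr} rests on the identities $E_J\widetilde{\sigma} = E_I\sigma$ (and the analogue for $\omega$) together with $\sum_{J\in\roof(I)}\mathbf{1}_J = \mathbf{1}_I\circ\Phi$ holding for the \emph{same} $\Phi$, so the construction must be carried out simultaneously for the two weights rather than weight-by-weight.
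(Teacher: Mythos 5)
Your proposal is correct and follows essentially the same route as the paper's proof: invoke Proposition \ref{prop:start_ref_weights}, apply the Kakaroumpas--Treil small-step construction with $d$ large via Lemma \ref{lem:large_doubling_to_small}, use Lemma \ref{lem:Ap_preserved_KaTr} and Proposition \ref{lem:quadratic_Ap_swells_KaTr}, and finish with a martingale truncation $\mathbb{E}_M$ to restore the fixed step size (your justification of this last step is just a fleshed-out version of what the paper leaves implicit). Your flagged ``delicate point'' about a single $\Phi$ is automatically satisfied, since the stopping intervals in the construction are defined purely by the random walk on the dyadic tree and do not depend on the weight being remodeled.
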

The proof will follow from the ``small step'' construction of \cite[Section 4]{KaTr}.

\subsection{The construction of \texorpdfstring{\cite{KaTr}}{Kakaroumpas-Treil}}

We recall the ``small step'' construction of Kakaroumpas-Treil, whose key properties are precisely proved in \cite[Section 4]{KaTr}. Therefore, we do not prove all the claims in this subsection. Futhermore, we will only need the statement of Proposition \ref{prop:weights_small_dyadic_doubling} in later subsections.

Given an interval $I$, we first describe a  ``random'' walk on the tree of intervals $\mathcal{D}(I)$. Consider first the standard unbiased random walk $\{\Sigma_k\}_k$ on the integers $\mathbb{Z}$, where $\Sigma_k$ denotes the value of the random walk after $k$ steps. The probability space on which $\Sigma$ is defined is set of infinite sequences taking values in the symbol set $\{-, +\}$. We can naturally identify each sequence $\{m_j\}_{k =j}^{\infty} $ in this space with a tower of dyadic intervals in $\mathcal{D}(I)$, i.e., a sequence $\{I_j\}_{\{j \geq 0\}}$ such that $I_0 = I$ and $\pi I_{j+1} = I_j$, where $\pi$ denotes the natural dyadic parent operator: indeed, start with $I_0 = I$, and recursively define $I_{j+1} = (I_j)_{m_j}$, where for an interval $J$, we let $J_{-}$ and $J_+$ denote the left and right dyadic subchildren of $J$, respectively. Furthermore, each tower has a unique point of $I$ in its intersection, and each point has a unique such tower associated to it. Rewriting the random walk $\Sigma_k$ as a function on $I$, we obtain       
\[
  \Sigma_{I, k} (x) \equiv \sum\limits_{j=0}^{k-1} \sum\limits_{I' \in \mathcal{D}_{j} \left (I \right )} \mathbf{1}_{I_+} - \mathbf{1}_{I_-} \, .
\]
 These three perspectives on a random walk will be helpful as we define stopping intervals.

\begin{figure}[ht]
 \centering
	\fbox{\begin{subfigure}[b]{0.3\textwidth}
         \centering
         \includegraphics[width=\textwidth]{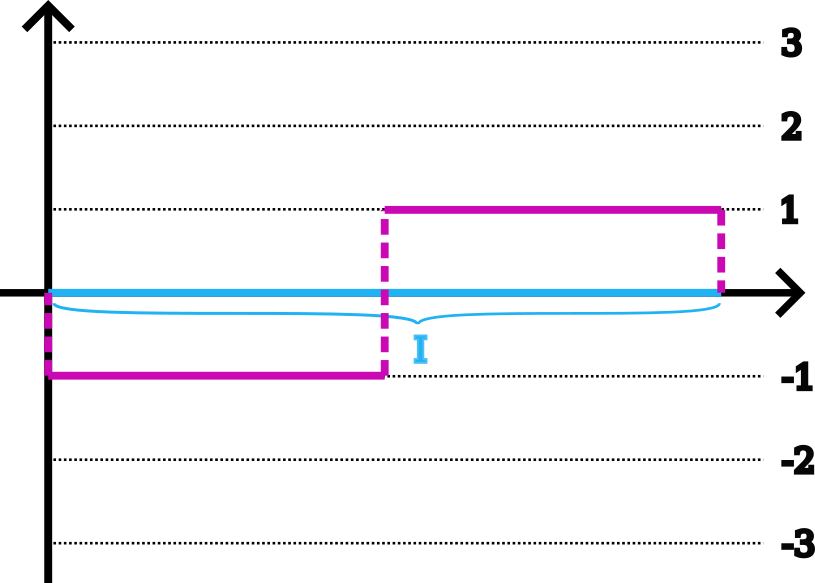}
	     \caption{Graph of $\Sigma_{I,1}$}
         \label{fig:y equals x}
	\end{subfigure}}
     \hfill
	\fbox{\begin{subfigure}[b]{0.3\textwidth}
         \centering
         \includegraphics[width=\textwidth]{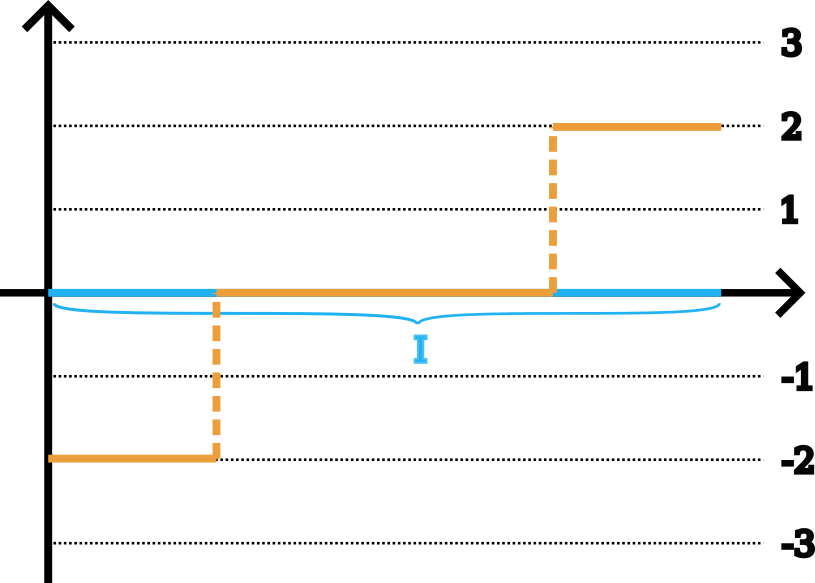}
         \caption{Graph of $\Sigma_{I,2}$}
         \label{fig:three sin x}
	\end{subfigure}}
     \hfill
	\fbox{\begin{subfigure}[b]{0.3\textwidth}
         \centering
         \includegraphics[width=\textwidth]{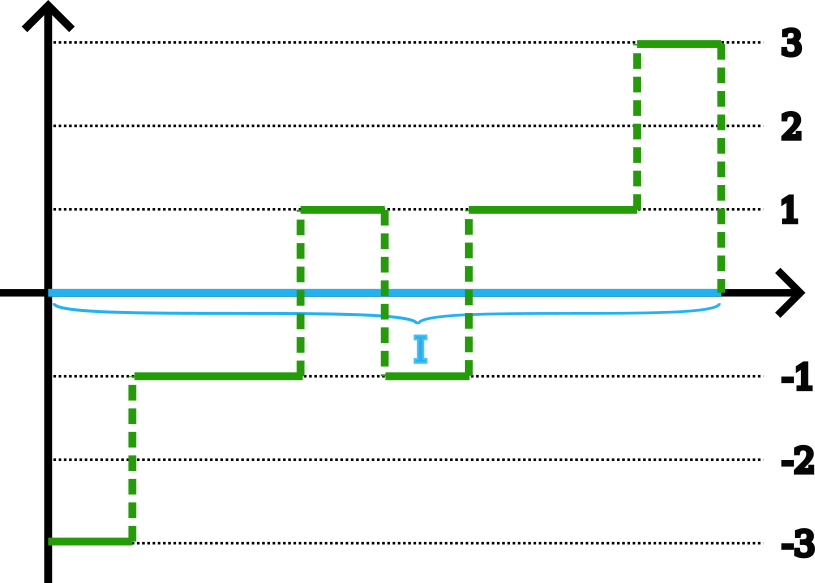}
         \caption{Graph of $\Sigma_{I,3}$}
         \label{fig:five over x}
	\end{subfigure}}
  \caption{Graphs of the truncated random walks $\Sigma_{I,1}$, $\Sigma_{I,2}$ and $\Sigma_{I,3}$}
\label{fig:KaTr_graph_Sigma}
\end{figure}

We are interested in the first times $k$ when the random walk $\Sigma_k$ escapes the interval $[-d, d]$, where $d \geq 1$ is a large integer. In terms of the function $\Sigma_{I, k}$, we can equivalently consider the stopping times 
\[
 k_{I} ^{-} (x) \equiv \inf \left \{ k \geq 1 ~:~ \Sigma_{I,k} (x) \leq -d \right \}\, , \qquad 	k_{I} ^{+} (x) \equiv \inf \left \{ k \geq 1 ~:~ \Sigma_{I,k} (x) \geq d \right \} \, , 
\]
with the understanding that $\inf \emptyset = +\infty$. And then associate to them the sets
\[
	 \Omega ^{-} _I (x) \equiv \{ x \in I ~:~ k_{I} ^{-} (x) <\infty \text{ and } k_{I} ^{-} (x) < \Sigma_{I} ^{+} (x) \} \, , \qquad \Omega ^{+} _I (x) \equiv \{ x \in I ~:~ k_{I} ^{+} (x) <\infty \text{ and } k_{I} ^{+} (x) < \Sigma_{I} ^{-} (x) \} \, .
\]
Define $\stopp_{-} \left (I \right )$ and $\stopp_{+} \left (I \right )$ to be the maximal dyadic intervals $J \in \mathcal{D} \left (I \right )$ such that $J \subseteq \Omega^{-} _I$ and $J \subseteq \Omega^{+} _I$, respectively. These are stopping times from the towers perspective. Define the family of stopping intervals for $I$ by 
\[
\stopp \left (I \right ) \equiv \stopp_{-} \left (I \right ) \cup \stopp_{+} \left (I \right ) \, .
\]
Then $\stopp (I)$ equals the family of all maximal dyadic subintervals $J \in \mathcal{D}\left (I \right)$ such that 
\[
  \left | \sum\limits_{I' \in \mathcal{D} (I) ~:~ J \subsetneq I' } \mathbf{1}_{I '_+} - \mathbf{1}_{I '_-} \right | = d \, , 
\]
and $\stopp_{\pm} (I)$ is the family of all intervals $J$ in $\stopp (I)$ for which the sum in-between the absolute value signs above equals $\pm d$. By the classical theory of random walks, we have $ I \setminus \left ( \bigcup\limits_{J \in \stopp (I)} J \right )$ has Lebesgue measure zero and 
\begin{equation}\label{eq:measure_pres_stop_times_1}
\left |\bigcup\limits_{J \in \mathcal{S}_{+} \left ( I \right )} J \right | = \left |\bigcup\limits_{J \in \mathcal{S}_{-} \left ( I \right )} J \right | = \frac{1}{2} \left | I \right | \, ;
\end{equation}
see \cite[Lemma 4.1]{KaTr}.

The weights $\tilde \sigma$ and $\tilde \omega$ in Proposition \ref{prop:weights_small_dyadic_doubling} will be obtained from the weights $\sigma$ and $\omega$ in Proposition \ref{prop:start_ref_weights} by application of the map $ G \mapsto \tilde G$, described first informally as follows. Given $G \in L^{\infty} ([0,1))$ , we first ``transplant,'' or ``copy-paste,'' the graph of $G_0 \equiv G$ on $[0,1)_{-}$ (or $[0,1)_{+}$) onto each stopping interval in $\stopp_{-} ([0,1))$ (or $\stopp_{+} ([0,1))$) to get a function $G_1$. Then we will obtain a function $G_2$ by looking at $G_1$ on each stopping interval $I$ in $\stopp ([0,1))$, and again ``transplanting'' the graph of $G_1$ on the left (and right) child $I_{-}$ (and $I_+$) of $I$ onto each stopping interval in $\stopp_{-} \left ( I \right )$ (and $\stopp_{+} \left ( I \right )$ to get a function $G_2$. And repeat this process ad infinitum on each subsequent generation of stopping intervals to get a function $\widetilde{G}$.

To define $\tilde{G}$ precisely, we first need to define the set of roofs associated an element of $\mathcal{D}([0,1))$. Define the set of roofs associated to $[0,1)$ by $\roof ([0,1)) \equiv \{[0,1)\}$. Then for each $I \in \mathcal{D} ([0,1))$ for which $\roof (I)$ is defined, inductively define 
\[
\roof(I_{-} ) \equiv \bigcup\limits_{J \in \roof ( I)} \stopp_{-} (J) \, , \qquad \roof(I_{+} ) \equiv \bigcup\limits_{J \in \roof ( I)} \stopp_{+} (J) \, .
\]
 Induction combined with \eqref{eq:measure_pres_stop_times_1} shows that for all $I \in \mathcal{D} ([0,1))$, we have 
\begin{equation}\label{eq:measure_pres_stop_times_2}
|I| = \sum\limits_{J \in \roof(I)} |J| \, .
\end{equation}
In fact, if $I \subseteq K$ are both dyadic subintervals of $[0,1)$, then each roof of $\roof(I)$ contained in $L_1 \in \roof(K)$ is just a rescaling of a roof of $\roof(I)$ contained in another $L_2 \in \roof(K)$, meaning 
\[
\frac{1}{|L_1|} \sum\limits_{J \in \roof(I)~:~ J \subseteq L_1} |J| = \frac{1}{|L_2|} \sum\limits_{J \in \roof(I)~:~ J \subseteq L_2} |J|
\]
and hence for all $L \in \roof(K)$, we have
\begin{equation}\label{eq:measure_pres_stop_times_3}
\frac{|I|}{|K|} = \frac{1}{|L|} \sum\limits_{J \in \roof(I)~:~ J \subseteq L} |J| \, .
\end{equation}
 Define the supervisor of an interval  $J \in \roof(I)$ by $\supr (J) \equiv I$. See Figure \ref{fig:KaTr_stopping_times} for a picture of how supervisors relate to their roof sets: supervisors on the left side have the same color as their roofs on the right side.

And now define
\[
\tilde {G} \equiv G \circ \Phi \, ,
\]
where the measurable function 
\begin{equation}\label{eq:def_Phi}
\Phi: [0, 1) \to [0,1)
\end{equation}
is defined
almost everywhere as follows: almost every $x \in [0,1)$ is the intersection point of a unique sequence $\{J_m\}_{m=0}^{\infty}$ of dyadic intervals, where $J_m \in \roof(I_m)$ for some $I_m \in \mathcal{D}_{m} ([0,1))$. Since the sequence $\{J_m\}_{m=0}^{\infty}$ is nested, then so is the sequence $\{I_m\}_{m=0}^{\infty}$, and so it has at most one point of intersection, which we call $\Phi(x)$ if it exists. Then $\Phi(x)$ is well-defined for almost every $x$: only a countably many sequences of nested intervals $\{I_m\}_{m \geq 0}$ will have no intersection point, and the points $x$ which are the points of intersection of the sequence $\{J_m\}_{m \geq 0}$ where $J_m \equiv \supr (I_m)$ are measure $0$, so the union of all such $x$ is measure $0$.  
Note $\Phi$ is the unique map on $[0,1)$ whose preimage of any dyadic subinterval of $[0,1)$ is the union of all its roofs. More precisely, for all $I \in \mathcal{D} ([0,1))$, 
\begin{equation} 
\sum\limits_{J \in \roof (I) }  \mathbf{1}_J = \mathbf{1}_{I} \circ \Phi \, . \label{eq:indicators_Phi}
\end{equation}
 In Figure \ref{fig:KaTr_stopping_times}, the mappping $\Phi$ sends any interval $I$ on the right to the interval on the left of the same color.
\begin{proposition}\label{prop:measure_preserving_avgs}
The following properties of $\widetilde{G}$ and $\Phi$ hold.
\begin{enumerate}
  \item \label{measure-preserving} $\Phi$ is measure-preserving on $[0,1)$, i.e.,
\[
  \int\limits_{[0,1)} \widetilde{G} \, dx = \int\limits_{[0,1)} G \circ \Phi \, dx= \int\limits_{[0,1)} G \, dx\, .
\]
\item \label{averages_same} If $J \in \roof (I)$, then $E_{I} G = E_{J} \widetilde{G}$. More generally, if there exist $J_2 \in \roof(I)$ and $J_1 \in \roof(I_-) \cup \roof(I_+)$ such that $J_1 \subsetneq J \subseteq J_2$, then $ E_{J} \widetilde{G}$ is a convex combination of $E_{I_{-}} G$ and $E_{I_+} G$.
  \end{enumerate}
\end{proposition}
Property (\ref{averages_same}) means that in Figure \ref{fig:KaTr_stopping_times}, the average of $G$ on any interval on the left equals the average of $\tilde{G}$ on any interval on the right of the same color.   

\begin{proof}
  We check Property (\ref{measure-preserving}). If $G = \mathbf{1}_K$, where $K \subseteq [0,1)$ is a dyadic interval, then by  \eqref{eq:indicators_Phi} and \eqref{eq:measure_pres_stop_times_2},
  \[
 \int\limits_{[0,1)} \tilde{\mathbf{1}_K} \, dx =  \int\limits_{[0,1)} \mathbf{1}_K  \circ \Phi\, dx  = \sum\limits_{J \in \roof(K)} \int\limits_{[0,1)} \mathbf{1}_J\, dx  = \sum\limits_{J \in \roof(K)} |J| = |K| = \int\limits_{[0,1)} \mathbf{1}_K  \, .
  \]
  By linearity,  Property (\ref{measure-preserving}) holds for $\mathcal{D}$-dyadic step functions.

  Next we note that $E \subset [0,1)$ has Lebesgue measure $0$, then so does $\Phi^{-1} (E)$. Indeed, let $\{I\}_j$ be a covering of $E$ by disjoint dyadic subintervals of $[0,1)$. Using the measure-preserving property on the dyadics, we have 
  \[
  |\Phi^{-1} (E)| \leq  \int\limits_{[0,1)} \mathbf{1}_E \circ \Phi \leq \sum\limits_j \int\limits_{[0,1)} \mathbf{1}_{I_j} \circ \Phi = \sum\limits_j |I_j|  \, .
  \]
  If $E$ has measure $0$, then $\{I_j\}_j$ can be chosen so that the right side is arbitrarily small, and so $|\Phi^{-1}(E)| = 0$.

So now given $G \in L^{\infty} ([0,1))$, we consider its martingale approximation $\mathbb{E}_n G$. By Lebesgue differentiation,  
\[
\lim\limits_{n \to \infty} \mathbb{E}_n G = G
\]
pointwise outside a set $E$ of measure $0$. Similarly, 
\[
\lim\limits_{n \to \infty} \mathbb{E}_n G \circ \Phi = G \circ \Phi
\]
outside $\Phi^{-1} (E)$, which also has Lebesgue measure $0$ by the previous claim. Thus by dominated convergence and the fact that Property (\ref{measure-preserving}) holds for $\mathcal{D}$-dyadic step functions, we have
\[
\int\limits_{[0,1)} \tilde{G} \, dx = \int\limits_{[0,1)} G \circ \Phi \, dx = \lim\limits_{n \to \infty} \int\limits_{[0,1)} \mathbb{E}_n G \circ \Phi \, dx = \lim\limits_{n \to \infty} \int\limits_{[0,1)} \mathbb{E}_n G  \, dx = \int\limits_{[0,1)} G \, . 
\]
This proves Property (\ref{measure-preserving}).
  
  By a similar dyadic approximation argument as above, the first statement of  Property (\ref{averages_same}) will follow once we show it for $G = \mathbf{1}_K$ where $K$ is a dyadic interval. By \eqref{eq:indicators_Phi} we have
  \begin{equation}\label{eq:dyadic_expectation_small_step}
  E_J \tilde{\mathbf{1}_K} = \frac{1}{|J|} \int\limits_J \mathbf{1}_K \circ \Phi \, dx = \sum\limits_{L \in \roof(K)} \frac{1}{|J|} \int\limits_{J} \mathbf{1}_L \, dx \, . 
  \end{equation}
  If $I \subseteq K$, then each interval of $\roof(I)$ is contained in an interval of $\roof(K)$, and so the right integral equals 
  \[
  \frac{1}{|J|} \int\limits_J 1 = 1 = E_I \mathbf{1}_K \, , 
  \]
  where again in the last step we used that $I \subseteq K$. If on the other hand $K \subsetneq I$, then each interval of $\roof(K)$ is contained in an interval of $\roof (I)$, and so by \eqref{eq:measure_pres_stop_times_3}, the right side of \eqref{eq:dyadic_expectation_small_step} equals
  \[
  \sum\limits_{L \in \roof(K) ~:~ L \subseteq J} \frac{1}{|J|} \int\limits_{J} \mathbf{1}_L \, dx =\frac{1}{|J|} \sum\limits_{L \in \roof(K) ~:~ L \subseteq J} |L|  = \frac{|K|}{|I|} = E_I \mathbf{1}_K \, . 
  \]

  As for the second statement of Property (\ref{averages_same}), if $K$ is the unique element in $\roof \left (I \right )$ containing $J$, then $J$ decomposes uniquely into intervals from $\stopp_{-} \left (K \right)$ and $\stopp_{+} \left (K \right)$, on which $\tilde{G}$ is constructed to have average equal to $E_{I_{-}} G$ and $E_{I_+} G$ respectively. Thus,
  \[
 E_J \tilde{G} = \frac{1}{|J|} \left ( \sum\limits_{L \in \mathcal{S}_- (K)} \int\limits_{L} \tilde{G} \, dx + \sum\limits_{L \in \mathcal{S}_+ (K)} \int\limits_{L} \tilde{G} \, dx \right ) =  E_{I_{-}} G \left ( \frac{1}{|J|}   \sum\limits_{L \in \mathcal{S}_- (K)} |L| \right ) +  E_{I_{+}} G \left ( \frac{1}{|J|}\sum\limits_{L \in \mathcal{S}_+ (K)} |L| \right ) \, , 
  \]
  which is a convex combination of $E_{I_{-}} G$ and $E_{I_{+}} G$. 
\end{proof}

\begin{figure}[ht]
  \fbox{\includegraphics[width=\linewidth]{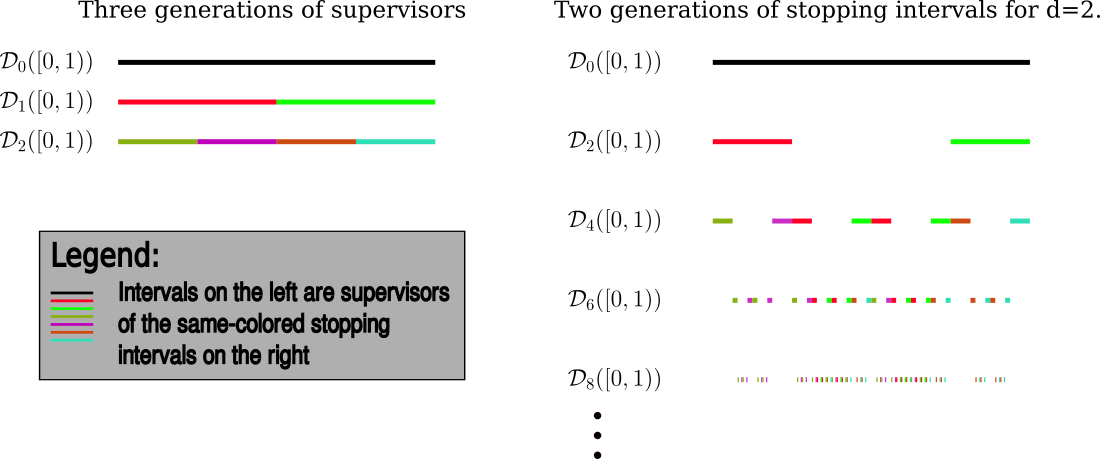}}
  \caption{Two generations of stopping intervals when $d=2$}
\label{fig:KaTr_stopping_times}
\end{figure}

  This rearrangment lets us bring the doubling ratio close to $1$. 
  \begin{lemma}[{\cite[Lemma 4.4]{KaTr}}]\label{lem:large_doubling_to_small}
  Let $\epsilon \in (0, \frac 1 2)$. If $w \in L^{\infty} ([0,1))$ is a positive function with finite dyadic doubling ratio over $\mathcal{D}([0,1))$, then there exists $d \geq 1$ such that  $\widetilde{w}$ is $\epsilon$-doubling on $\mathcal{D}([0,1))$.
  \end{lemma}
  \begin{proof}[Proof sketch using Figure \ref{fig:KaTr_stopping_times}] Let $w$ be a weight on $[0,1)$ and consider $\widetilde{w}$.  Using Figure \ref{fig:KaTr_stopping_times} as a reference, $w$ will have the same average on a dyadic interval $I$ on the left as the average of $\widetilde{w}$ over any interval $J$ on the right side with same color as $I$. Let $J$ be, say, a red interval on the right side of Figure \ref{fig:KaTr_stopping_times}, so that it has supervisor the red interval $\left [ 0, \frac{1}{2} \right)$ on the left. Then the random walk counter $\Sigma$ equals $-d$ when on $J$. If $J'$ is the dyadic sibling of $J$, then the random walk counter $\Sigma$ equals $-d+2$ on $J'$: indeed, just look at their common dyadic parent $K$, on which $\Sigma$ must equal $-d+1$. If $d\gg 1$, then $\Sigma$ on $J'$ is very close to $-d$, and so the random walk proceeding from $J'$ will hit $-d$ very often. Hence $J'$, when decomposed into red and green stopping intervals, will be mostly red, with small smudges of green. This means $E_{K_{-}} \tilde{w}$ will be very close to $E_{K_{+}} \tilde{w}$, meaning we get dyadic $\epsilon$-doubling. 
\end{proof}
  \subsection{Effects of the small-step construction on Muckenhoupt conditions} As a consequence of Proposition \ref{prop:measure_preserving_avgs} part (\ref{averages_same}), the small-step construction of Kakaroumpas-Treil also preserves the dyadic $A_p$ condition. 
  \begin{lemma}[{\cite[Lemma 4.5]{KaTr}}]\label{lem:Ap_preserved_KaTr}
    If $\sigma, \omega \in L^{\infty} ([0,1))$ are positive, then  
    \[
    A_p ^{\mathcal{D}([0,1))} (\widetilde{\sigma}, \widetilde{\omega}) \lesssim A_p ^{\mathcal{D} ([0,1))} (\sigma, \omega) \, .
    \]
  \end{lemma}

A new observation is that $A_p ^{\ell^2, \operatorname{local}, \mathcal{D} ([0,1)) } (\sigma, \omega)$ can only grow under the small step construction of \cite{KaTr}.
\begin{proposition}\label{lem:quadratic_Ap_swells_KaTr}
  If $\sigma, \omega$ are positive functions in  $L^{\infty} ([0,1))$, then  
  \[
    A_p ^{\ell^2, \operatorname{local}, \mathcal{D} ([0,1)) } (\sigma, \omega) \leq A_p ^{\ell^2, \operatorname{local}, \mathcal{D} ([0,1)) } (\widetilde{\sigma}, \widetilde{\omega})
  \]
\end{proposition}
\begin{proof}
  Let $\{a_I\}$ be a near-extremizing sequence for $A_p ^{\ell^2, \operatorname{local}, \mathcal{D} ([0,1))  } (\sigma, \omega)$, i.e., given $\epsilon \in (0,1)$, let $\{a_I\}_I$ be such that for $\mathcal{S} = \mathcal{D} ([0,1)) $ we have \eqref{eq:Ap_quadratic_S} can only hold with a constant $C > (1-\epsilon) A_{p} ^{\ell^2, \operatorname{local}, \mathcal{D}([0,1))}$. Consider the left side of \eqref{eq:Ap_quadratic_S} for the measure $(\widetilde{\sigma}, \widetilde{\omega})$, with $\mathcal{S} = \mathcal{D}([0,1))$, and with another particular choice of coefficients determined by $\{a_I\}$:
 \[ 
    \left \| \left ( \sum\limits_{I \in \mathcal{D} ([0,1))} \sum\limits_{J \in \roof (I) } \left | a_I \right |^2 \left |  E_J \widetilde{\sigma} \right |^2 \mathbf{1}_J \right ) ^{\frac{1}{2}} \right \|_{L^p (\widetilde{\omega})}
  \]
  By Proposition \ref{prop:measure_preserving_avgs} part (\ref{averages_same}), and then \eqref{eq:indicators_Phi} and the fact that $\Phi$ is measure preserving, this equals 
  \[
    \left \| \left ( \sum\limits_{I \in \mathcal{D} ([0,1))} \left | a_I E_I \sigma \right |^2 \sum\limits_{J \in \roof (I) }  \mathbf{1}_J \right ) ^{\frac{1}{2}} \right \|_{L^p (\widetilde{\omega})} =
  \left \| \left ( \sum\limits_{I \in \mathcal{D} ([0,1))} \left | a_I E_I \sigma \right |^2 \mathbf{1}_I \right ) ^{\frac{1}{2}} \right \|_{L^p (\omega)} \, . 
  \]
  Thus for our particular choice of coefficients, the left-hand side of \eqref{eq:Ap_quadratic_S} for $(\widetilde{\sigma}, \widetilde{\omega})$ reduces to the left-hand side of \eqref{eq:Ap_quadratic_S} for $(\sigma, \omega)$ for near-extremizing coefficients $\{a_I\}$. Similarly for the right side of \eqref{eq:Ap_quadratic_S}. Putting this all together yields \[
    (1-\epsilon) A_p ^{\ell^2, \operatorname{local}, \mathcal{D} ([0,1)) } (\sigma, \omega) \leq A_p ^{\ell^2, \operatorname{local}, \mathcal{D} ([0,1)) } (\widetilde{\sigma}, \widetilde{\omega}) \, .
  \]
  Letting $\epsilon \to 0$ yields the inequality sought.
\end{proof}

  \begin{proof}[Proof of Proposition \ref{prop:weights_small_dyadic_doubling}]
    The weights $\sigma, \omega$ of Proposition \ref{prop:start_ref_weights}, satisfy all the requirements of the proposition except for being $\tau$-doubling on $\mathcal{D}([0,1))$. Replace $\sigma, \omega$ by $\widetilde{\sigma}, \widetilde{\omega}$. Then they are $\tau$-doubling on $\mathcal{D}([0,1))$ by Lemma \ref{lem:large_doubling_to_small} if $d$ is chosen sufficiently large. By Lemma \ref{lem:Ap_preserved_KaTr}, $A_p ^{\mathcal{D}([0,1))} (\tilde{\sigma}, \tilde{\omega}) \lesssim 1$, and by Proposition \ref{lem:quadratic_Ap_swells_KaTr} we have $A_{p} ^{\ell^2, \operatorname{local}, \mathcal{D}([0,1))} (\tilde{\sigma}, \tilde{\omega}) > \Gamma$. Finally, we can replace $\widetilde{\sigma}$ and $\widetilde{\omega}$ by their martingale approximations $\mathbb{E}_M \widetilde{\sigma}$ and $\mathbb{E}_M \widetilde{\omega}$ for some $M$ sufficiently large to get that the weights are $\mathcal{D}$ dyadic step functions. 
  \end{proof}

	\section{A remodeling/supervisor argument to well-localize Haar support}\label{section:transplantion}
	 In this section, we explain Nazarov's remodeling \cite{NaVo, Naz2}\footnote{Again as in the introduction, \cite{Naz2} is unpublished. We refer the reader to \cite[Section 4]{NaVo} for details,}, which is a map on $L^{\infty}([0,1))$ denoted as 
    \begin{equation}\label{eq:remdel_mention_1}
    \tilde{G} \mapsto \hat{G} \, . 
    \end{equation}
    We will apply this map to the weights $\tilde{\sigma}$ and $\tilde{\omega}$ in Proposition \ref{prop:weights_small_dyadic_doubling} to obtain weights $\hat{\sigma}$ and $\hat{\omega}$ on $\mathbb{R}$ which (1) are continuously doubling, (2) still satisfy the same Muckenhoupt estimates, and (3) have Haar support on a set of sparse frequencies. This section focuses on the first two properties, while the last property is used in Section \ref{section:testing} to estimate the testing constants for a Riesz transform on the tensored measures. In \cite[Section 5.2]{KaTr}, the authors motivate a similar type of remodeling and trace  Nazarov's argument back to \cite{Bo}.

	\subsection{A simpler remodeling which fails continuous doubling}\label{subsection:transplantation_first_try}

We define here a map
\[
\tilde{G} \mapsto \overline{G} \, . 
\]
While this map is conceptually simpler than remodeling, it has the caveat that it fails to produce continuously doubling measures. But a minor tweak yields the map \eqref{eq:remdel_mention_1} and  continuous doubling in  Subsection \ref{subsection:transplantation_blue_cubes}. 
    
	Set $k_0 \equiv 0$ and let $\{k_{\ell}\}_{\ell=1}^{\infty}$ be a sequence of positive integers which will be chosen inductively later on. Define the grids 
    \[
      \mathcal{K} \equiv \bigcup\limits_{t=0}^{\infty} \mathcal{K}_t \, , \qquad \mathcal{K} (I) \equiv \bigcup\limits_{t=0}^{\infty} \mathcal{K}_t (I)  \, ,
    \]
    where
    \[
    \mathcal{K}_t \equiv \mathcal{D}_{k_0 + k_1 + \ldots + k_t} , \, \qquad \mathcal{K}_t (I ) \equiv \mathcal{D}_{k_0 + k_1 + \ldots + k_t} (I) \, .   \] 
    Each grid has a natural parent operator, denoted by $\pi_{\mathcal{K} (I)}$ and $\pi_{\mathcal{K}}$, each of  which takes an interval in the grid (except at the top level) and returns the smallest interval within the grid containing but not equal to it. The Haar support of  $\overline{G}$ will be confined to these grids.

    Given an interval $I$ and a positive integer $t$, define the $t$\textsuperscript{th} level stopping intervals 
    \[
    \stopp ^{t} (I) \equiv \mathcal{D}_{k_t} (I) \, .
    \]
    We partition  $\stopp ^t (I)$ into the set $\stopp ^t _{-} (I)$ of intervals which are the left children of their $\mathcal{D}(I)$-dyadic parent and the set $\stopp ^t _{+} (I)$ of intervals in $\stopp (I)$ which are the right children of their dyadic parent. Thus going from left to right, the intervals in $\stopp ^{t}  (I)$ alternate between membership in  $\stopp ^t _{-} (I)$ and $\stopp ^t _{+} (I)$.

Given $\tilde{G} \in L^{\infty} ([0,1))$, we will define the function $\overline{G}$ on $[0,1)$ by first ``transplanting,'' or ``copy-pasting,'' the graph of $G_0 \equiv \tilde{G}$ on $[0,1)_{-}$ (or $[0,1)_{+}$)  onto each stopping interval in $\stopp_{-} ^1 ([0,1))$ (or $\stopp_{+} ^1 ([0,1))$) to get a function $G_1$. Then we will obtain a function $G_2$ by looking at $G_1$ on each stopping interval $I$ in $\stopp ^1 ([0,1))$, and again transplanting the graph of $G_1$ on the left (and right) child $I_{-}$ (and $I_+$) of $I$ onto each stopping interval in $\stopp_{-} ^2 \left ( I \right )$ (and $\stopp_{+} ^2 \left ( I \right )$ to get a function $G_2$. And from $G_2$, we look at each stopping interval $I$ in $\stopp ^2 ([0,1))$, and again transplant the graph of $G_2$ on $I_{\pm}$ onto each stopping interval in $\stopp_{\pm} ^2 \left ( I \right )$ to get a function $G_3$. And repeat this process ad infinitum on each subsequent generation of stopping intervals to get a function $\overline{G}$.

More precisely, we again define  roofs associated to elements of $\mathcal{D}(([0,1))$. First set $\roof ([0,1)) \equiv \{[0,1)\}$. Then for each $I \in \mathcal{D}_t ([0,1))$ for which $\roof (I)$ is defined, inductively define $\roof(I_{\pm} ) \equiv \bigcup\limits_{J \in \roof ( I)} \stopp_{\pm} ^{t+1} (J)$.  And given $J \in \mathcal{R}(I)$, define the \emph{supervisor} of $J$ to be $\supr (J) \equiv I$. The parent and supervisor  operations  $\pi$ and $\supr$ commute, i.e., 
\[
\supr\left (\pi_{\mathcal{K}} J \right ) = \pi_{\mathcal{D}} \supr\left ( J \right ) \, .
\]
See Figure \ref{fig:remodeling} for a picture of how supervisors relate to their roof sets:  supervisors on the left  have the same color as all their roofs on the right side.

We now define
\[
\overline{G} \equiv \tilde{G} \circ \Phi \, ,
\]
where the measurable function 
\[
\Phi: [0, 1) \to [0,1)
\]
is defined
almost everywhere as follows: almost every $x \in [0,1)$ is the intersection point of a unique sequence $\{J_m\}_{m=0}^{\infty}$ of dyadic intervals, where $J_m$ belongs to $\roof(I_m)$ for some $I_m \in \mathcal{D}_{m} ([0,1))$. Since the sequence $\{J_m\}_{m=0}^{\infty}$ is nested, then so is the $\{I_m\}_{m=0}^{\infty}$, and so for almost every point $x$ has a unique intersection point, which we call $\Phi(x)$. Note $\Phi$ is the unique map whose preimage of any dyadic subinterval of $[0,1)$ is the union of all its roofs, i.e., for all $I \in \mathcal{D}_t([0,1))$,
\begin{equation} 
 \mathbf{1}_{I} \circ \Phi= \sum\limits_{J \in \roof (I) }  \mathbf{1}_J =  \sum\limits_{J \in \mathcal{K}_t ([0,1)) ~:~ \supr(J) = I }  \mathbf{1}_J \, . \label{eq:indicators_Phi_2}
\end{equation}
 Pictorially, any interval of a given color on the left of Figure \ref{fig:remodeling} has preimage the union of all the intervals of the same color on the right of Figure \ref{fig:remodeling}. 
The operator $G \mapsto \bar{G}$ is known as ``remodeling'' \cite{KaTr, Naz2}. It was also referred to as ``transplantation'' in \cite{AlLuSaUr}, because this map ``transplants'' the averages of the original function $\tilde{G}$ onto new intervals; see, e.g., Figure \ref{fig:remodeling}. 
\begin{figure}[ht]
  \fbox{\includegraphics[width=\linewidth]{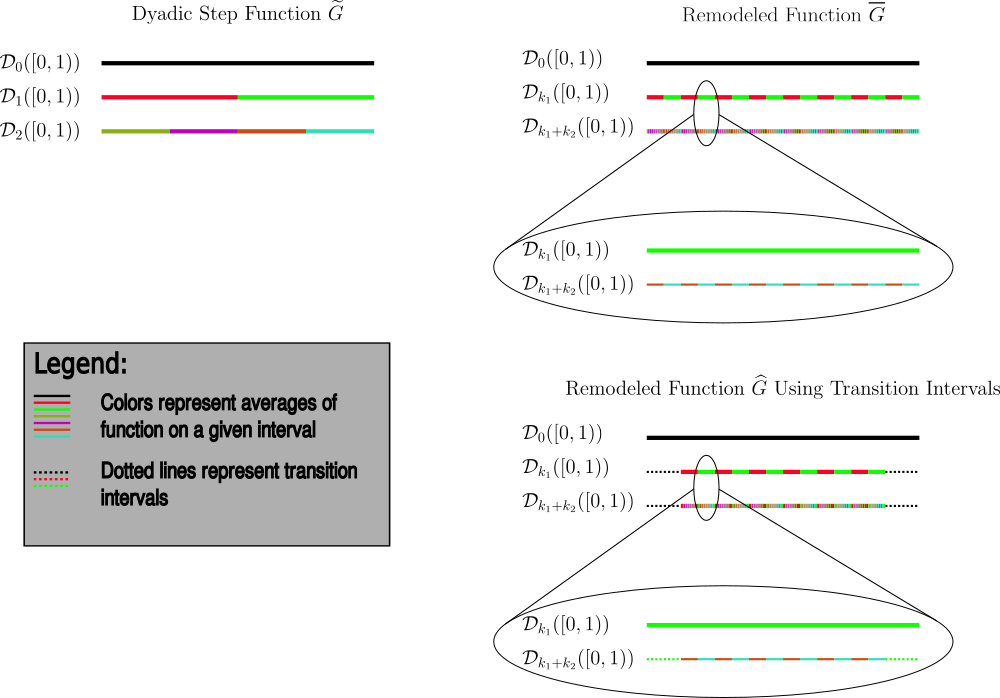}}
  \caption{Remodeling of a dyadic step function with step size $\frac{1}{4}$}
\label{fig:remodeling}
\end{figure}

\begin{proposition}\label{prop:measure_preserving_avgs_2}
The following properties of $\overline{G}$ and $\Phi$ hold.
\begin{enumerate}
  \item \label{measure-preserving_2} $\Phi$ is measure-preserving on $[0,1)$, i.e.,
\[
  \int\limits_{[0,1)} \overline{G} \, dx = \int\limits_{[0,1)} \tilde{G} \circ \Phi \, dx= \int\limits_{[0,1)} \tilde{G} \, dx\, .
\]
\item \label{averages_same_2} If $J \in \mathcal{D}([0,1))$ and $K$ is the smallest interval in $\mathcal{K}$ containing $J$, then
\begin{equation}\label{eq:avg_remodeling_JK}
E_{J} \overline{G} = E_{K} \overline{G} = E_{\supr(K)} \widetilde{G} \, . 
\end{equation}
  \end{enumerate}
\end{proposition}
\begin{proof}
The proof of Property \eqref{measure-preserving_2} and the second equality in \eqref{eq:avg_remodeling_JK} is the same as the proof of Proposition \ref{prop:measure_preserving_avgs}. As for the first equality in \eqref{eq:avg_remodeling_JK}, 
it follows by noting that if $K \in \mathcal{K}_t$, then half of $J$ is covered by intervals in $\stopp^{t+1} _{-} (K)$, on which $\overline{G}$ has average $E_{\supr(K)_{-}} \tilde{G}$, and the other half is covered by intervals in $\stopp^{t+1} _{+} (K)$, on which $\overline{G}$ has average $E_{\supr(K)_{+}} \tilde{G}$.
\end{proof}

Now given a function $\tilde{G} \in L^{\infty} ([0,1))$ and a nonnegative integer $t$, define the $\mathcal{D}_t ([0,1))$ martingales 
    \begin{align*}
      \tilde{G}_t (x) &\equiv \mathbb{E}_t \tilde{G} (x) = \sum\limits_{I \in \mathcal{D}_t ([0,1))} \left ( E_I \tilde{G} \right ) \mathbf{1}_I \, .
          \end{align*} Then we may write the martingale differences as a linear combination of Haar functions, i.e.,\ \begin{align}\label{eq:normal_discrepency}
  \tilde{G}_{t+1} (x) - \tilde{G}_t (x) =\sum\limits_{I \in \mathcal{D}_t ([0,1))} \langle \tilde{G}, h_I  \rangle h_I  (x) \, , 
\end{align}
where the $L^2 (I)$-normalized Haar function $h_I$ is defined as
\begin{equation}\label{eq:Haar_defn}
  h_I \equiv \frac{1}{\sqrt{\left | I \right | }} \left (- \mathbf{1}_{I_-} + \mathbf{1}_{I_+} \right ) \, .
\end{equation}
Then for each integer $t$, define $\bar{G}_t \equiv \tilde{G}_t \circ \Phi$. By \eqref{eq:indicators_Phi_2}, 
we have
\[
  \bar{G}_t \equiv \sum\limits_{I \in \mathcal{D}_t ([0,1))} \left ( E_{I} \tilde{G} \right ) \sum\limits_{J \in \mathcal{K}_t ([0,1)) ~:~ \supr(J) = I} \mathbf{1}_J   \, . 
\]
And by \eqref{eq:Haar_defn}, \eqref{eq:indicators_Phi_2} and \eqref{eq:normal_discrepency}  the martingale differences are given by
\begin{align}\label{eq:init_discrepency}
  \bar{G}_{t+1} -\bar{G}_t \equiv \sum\limits_{I \in \mathcal{D}_t ([0,1))}  \langle \tilde{G}, h_{I} \rangle \frac{1}{\sqrt{ \left | I \right |}} \sum\limits_{J \in \mathcal{K}_t ([0,1)) ~:~ \supr (J) = I} s_{k_{t+1}} ^{J}  \, ,
\end{align}
where the function $s_{k_{t+1}} ^{J}$ vanishes outside of $J$, is constant on each interval in $\mathcal{D}_{2^{-k_{t+1}}} (J)$, and equals $\pm 1$ on each interval in $\mathcal{S}_{\pm} ^{t+1} (J)$. By telescoping, we have
\begin{equation}\label{eq:barG_defn_s}
  \bar{G}_{\ell} = \left(  E_{[0,1)} \tilde{G} \right)  \mathbf{1}_{[0,1)} +\sum\limits_{t=0}^{\ell-1}%
  \sum\limits_{R \in \mathcal{D}_t ([0,1))} \langle \tilde{G},h_{R}\rangle\frac{1}{\sqrt{\left\vert R \right\vert }} \sum\limits_{Q\in\mathcal{K}_{t}([0,1)) ~:~ \supr (Q) = R} s_{k_{t+1}}^{Q}\, .
\end{equation}
Note that if $\tilde{G}$ is a dyadic step function of step size $2^{-m}$, then $\bar{G} \equiv \bar{G}_{m+\ell}$ for all $\ell \geq 0$, in which case by \eqref{eq:barG_defn_s}, $\bar{G}$ has Haar support on the grid $\mathcal{K}([0,1))$. 

This simplified remodeling will disarrange our weights so that their Haar decompositions will consist of wavelets with frequencies that are far and well-separated from each other, yielding a lot of cancellation in their cross interactions, even when when acted on an operator with transverse singularity. We will use this to estimate the testing constants for $R_2$ for the tensors of our weights in Section \ref{section:testing} following \cite{AlLuSaUr}.

If $\widetilde{G}$ is $\tau$-doubling on $\mathcal{D}([0,1))$, then so is $\bar{G}$. But to get continuous doubling, we will need to modify the remodeling construction as in \cite{NaVo, Naz2} (see also \cite{KaTr} for an alternate ``iterated remodeling'' not needed here).
        
\subsection{Nazarov's trick of using transition cubes to get continuous doubling} \label{subsection:transplantation_blue_cubes}

Define the grid $\widehat{\mathcal{K}}$ from $\mathcal{K}$ inductively as
follows. Firstly, $[0,1)$ is an element of $\widehat{\mathcal{K}}$.  And given
$Q\in\widehat{\mathcal{K}}$, every $\mathcal{K}$-child of $Q$ belongs to  $\widehat{\mathcal{K}}$ if it is not one of the two left-most or two right-most $\mathcal{K}$-children of $Q$. For $Q \in \widehat{\mathcal{K}}$, its two left-most and two right-most $\mathcal{K}$-children are called  
\textit{transition intervals} for $Q$. Define
$\widehat{\mathcal{K}}_{t+1}$ to consist of all intervals 
\[
P \in \widehat{\mathcal{K}} \cap\mathcal{K}_{t+1} \, .
\]
And let $\mathcal{T}_{\ell}$ be the set of transition interval in $\mathcal{K}_{\ell}$.  One may define $\hat{\mathcal{K}}(I)$ and $\mathcal{T}_{\ell} \left (I \right )$ from $\mathcal{K}(I)$ similarly.  Note transition intervals are pairwise disjoint.  And if two adjacent intervals in $\widehat
{\mathcal{K}}(I)$ have equal lengths, then they have the same $\mathcal{K}(I)$-parent. In fact, adjacent transition intervals are close in
tree distance. 

\begin{lemma}\label{lemma:adj_trans}
	Let $T_1$ and $T_2$ be adjacent transition intervals. Then we are in one of following two cases. 
	\begin{enumerate}
		\item The pair $T_1$ and $T_2$ have a common $\mathcal{K}$-ancestor $K$, and $K$ is at most two levels away from $T_1$ and $T_2$ in $\mathcal{K}$, meaning if $T_1 \in \mathcal{K}_{t_1}$, then $K \in \mathcal{K}_{t_1 -1 }$ or $\mathcal{K}_{t_1 -2}$, and similarly for $T_2$.
		\item The pair $T_1$ and $T_2$ have no common $\mathcal{K}$-ancestor, and $T_1 , T_2$ are both elements of $\mathcal{K}_1$.
	\end{enumerate}
\end{lemma}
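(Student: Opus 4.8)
The plan is to argue by a case analysis that tracks, for each of the two transition intervals, its position inside the member of $\widehat{\mathcal{K}}$ that it refines. First I would record three facts, all immediate from Section~\ref{subsection:transplantation_blue_cubes}: \textbf{(i)} $\widehat{\mathcal{K}}_t$ consists precisely of the $I\in\mathcal{K}_t$ no $\mathcal{K}$-ancestor of which (including $I$ itself) is a transition interval; in particular $\widehat{\mathcal{K}}$ is closed under passing to $\mathcal{K}$-ancestors. \textbf{(ii)} If $Q\in\widehat{\mathcal{K}}_{t-1}$ and its $2^{k_t}$ children in $\mathcal{K}_t$ are indexed $0,\dots,2^{k_t}-1$ from left to right, then the transition children of $Q$ are exactly those with index $0,1,2^{k_t}-2,2^{k_t}-1$; in particular the two outermost children are always transition and hence never belong to $\widehat{\mathcal{K}}_t$. \textbf{(iii)} Two adjacent equal-length members of $\widehat{\mathcal{K}}$ share a $\mathcal{K}$-parent, except when both lie in $\widehat{\mathcal{K}}_0=\mathcal{K}_0$, in which case they sit in distinct unit cells and have no common $\mathcal{K}$-ancestor. (Here $k_t\gg1$, so (ii) is non-degenerate: every $Q\in\widehat{\mathcal{K}}$ does have non-transition children.)

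Next I would normalize: by symmetry assume $t_1\le t_2$ and that $T_1$ lies to the left of $T_2$, let $x_0$ be their common endpoint, and set $Q_i\equiv\pi_{\mathcal{K}}T_i\in\widehat{\mathcal{K}}_{t_i-1}$. By (ii), $T_1$ is child $j$ of $Q_1$ for some $j\in\{0,1,2^{k_{t_1}}-2,2^{k_{t_1}}-1\}$, so $x_0$ is either the right endpoint of $Q_1$ (when $j=2^{k_{t_1}}-1$) or an interior $\mathcal{K}_{t_1}$-gridpoint of $Q_1$; I would split into these two possibilities. In the first case ($x_0=$ right endpoint of $Q_1$), $T_2$ lies immediately right of $Q_1$, and I claim $t_2=t_1$: if $t_2>t_1$, the $\mathcal{K}_{t_1}$-ancestor of $T_2$ would, on one hand, be the $\mathcal{K}_{t_1}$-ancestor of $Q_2$ and hence lie in $\widehat{\mathcal{K}}_{t_1}$ by (i), while on the other hand it is the leftmost child of the $\mathcal{K}_{t_1-1}$-interval just right of $x_0$, which is a transition interval — a contradiction (both when that $\mathcal{K}_{t_1-1}$-interval lies in $\widehat{\mathcal{K}}$, by (ii), and when it does not, since it is then contained in a transition interval, against (i)). With $t_2=t_1$ forced, $Q_1$ and $Q_2$ are adjacent equal-length members of $\widehat{\mathcal{K}}_{t_1-1}$, so (iii) gives either a common $\mathcal{K}$-parent $K\in\mathcal{K}_{t_1-2}$, two levels above each $T_i$ — conclusion (1) — or, if $t_1=1$, the edge case in which $Q_1,Q_2\in\mathcal{K}_0$ and $T_1,T_2\in\mathcal{K}_1$ have no common $\mathcal{K}$-ancestor — conclusion (2).

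In the second case ($x_0$ interior to $Q_1$, so $j\in\{0,1,2^{k_{t_1}}-2\}$), let $\tilde Q$ be the child of $Q_1$ immediately right of $x_0$. If $j\in\{0,2^{k_{t_1}}-2\}$ then $\tilde Q$ is child $1$ or child $2^{k_{t_1}}-1$, hence transition, hence not in $\widehat{\mathcal{K}}_{t_1}$; the same impossibility argument as before forces $t_2=t_1$, whence $T_2=\tilde Q$, $Q_2=Q_1$, and $K\equiv Q_1\in\mathcal{K}_{t_1-1}$ is a common ancestor one level above each $T_i$ — conclusion (1). If $j=1$ then $\tilde Q$ is child $2$ of $Q_1$, which is non-transition, so $\tilde Q\in\widehat{\mathcal{K}}_{t_1}$; since $T_2$ is transition it cannot equal $\tilde Q$, so $t_2>t_1$, $T_2\subsetneq\tilde Q$, and $T_2$ meets the left endpoint of $\tilde Q$. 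The $\mathcal{K}_{t_1+1}$-ancestor of $T_2$ is then the leftmost — hence transition — child of $\tilde Q$, which by (i) prevents $Q_2$ from descending below level $t_1+1$; so $t_2=t_1+1$, $Q_2=\tilde Q$, and $K\equiv Q_1\in\mathcal{K}_{t_1-1}$ is a common ancestor one level above $T_1$ and two above $T_2$ — conclusion (1). The mirror cases ($T_1$ to the right of $T_2$) follow by reflection.

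The step I expect to be the main obstacle is the recurring "impossibility" argument ruling out $t_2>t_1$ when $T_1$ abuts or just-barely-avoids the boundary of $Q_1$: although all instances run on the single mechanism that the entire $\mathcal{K}$-ancestor chain of a $\widehat{\mathcal{K}}$-interval avoids transition intervals while outermost children are always transition, one must chase the ancestor chain carefully and separately dispose of the degenerate possibility that the relevant $\mathcal{K}_{t_1-1}$-interval has itself been absorbed into a transition interval of an earlier generation. Getting the index bookkeeping exactly right — so that the bound $K\in\mathcal{K}_{t_i-1}\cup\mathcal{K}_{t_i-2}$ holds simultaneously for $i=1,2$ — is the other delicate point; along the way one in fact learns that $t_2-t_1\in\{0,1\}$ always.
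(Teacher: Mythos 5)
Your proof is correct and runs on the same mechanism as the paper's: a case analysis on the adjacency configuration using that $\widehat{\mathcal{K}}$ consists exactly of intervals not contained in transition intervals, that the outermost children of a $\widehat{\mathcal{K}}$-interval are transition, and that adjacent equal-length $\widehat{\mathcal{K}}$-intervals share a $\mathcal{K}$-parent (except at level $0$). You merely organize the cases by the position of the common endpoint and the child index of $T_1$ rather than by comparing the sizes of $T_1$ and $T_2$, and you spell out (including the harmless nondegeneracy $k_t\gg 1$) the steps the paper compresses into ``by structure of $\widehat{\mathcal{K}}$.''
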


\begin{proof}
	Let $T_1$ and $T_2$ be two adjacent transition intervals.
We do a case analysis.

	If $T_1$ and $T_2$ are the same size, then $\pi_{\mathcal{K}} T_1$ and $\pi_{\mathcal{K}} T_2$ are either equal, meaning we are in Case (1), or they are adjacent in $\hat{\mathcal{K}}$. In the latter case, both $\pi_{\mathcal{K}} T_1$ and $\pi_{\mathcal{K}} T_2$ 
    are either at the top level of $\mathcal{K}$, which is Case (2) because they cannot have common $\mathcal{K}$-ancestor, or they 
    have common $\mathcal{K}$-parent, which falls into Case (1). 

	Say now $T_2$ is larger than $T_1$. Then $T_2 \in \mathcal{K}$ and $\pi_{\mathcal{K}} T_1 \in \hat{\mathcal{K}}$ must be adjacent intervals of equal size. And again by structure of $\hat{\mathcal{K}}$,  these intervals' $\mathcal{K}$-parents must then be the same, which lands us in Case (2).
\end{proof}

Now given $Q\in\widehat{\mathcal{K}}_{t}([0,1))$, define
\begin{equation}\label{eq:s_k_transition}
  \hat{s}_{k_{t+1}}^{Q}(x)\equiv%
\begin{cases}
s_{k_{t+1}}^{Q}(x) & \text{ if }x\text{ is not
contained in a transition interval for }Q\\
0 & \text{ if }x\text{ is 
	contained in a transition interval for }Q \text{ or is outside of } Q\\

\end{cases}
\,.
\end{equation}
With this new function, we modify the definition \eqref{eq:barG_defn_s} and define
\begin{align}\label{eq:trans_modification}
  \hat{G}_{\ell}  &  \equiv\left(  E_{[0,1)} \tilde{G} \right)  \mathbf{1}_{[0,1)}
	+\sum\limits_{t=0}^{\ell-1}\sum\limits_{R \in \mathcal{D}_t ([0,1))} \langle
\widetilde{G},h_{R}\rangle\frac{1}{\sqrt
	{\left\vert R \right\vert }} \sum\limits_{Q\in\widehat{\mathcal{K}}_{t}([0,1)) ~:~ \supr(Q) = R} \hat{s}_{k_{t+1}}%
^{Q} \, .
\end{align}
 We may also view $\widehat{G}_{\ell}$ as a version of the the previous construction which terminates at the transition intervals instead of proceeding ad infinitum: given $x\in [0,1)$ and $\ell \geq 0$,  define
\[
t_{\ell} (x)\equiv%
\begin{cases}
t\text{ if }x\text{ is contained in a transition interval belonging to
}\mathcal{K}_{t} \text{ for some } t< \ell\\
\ell\text{ otherwise }%
\end{cases}
.
\]
Then pointwise we have
\begin{align*}
  \hat{G}_{\ell} (x)&=\left(  E_{[0,1)} \tilde{G}  \right)  \mathbf{1}_{[0,1)}%
+\sum\limits_{t=0}^{t_{\ell} (x)-1}\sum\limits_{R \in \mathcal{D}_t ([0,1))} \langle
\widetilde{G},h_{R}\rangle\frac{1}{\sqrt
	{\left\vert R \right\vert }} \sum\limits_{Q\in\widehat{\mathcal{K}}_{t}([0,1)) ~:~ \supr(Q) = R} \hat{s}_{k_{t+1}}%
^{Q}(x) \\
&= \bar{G}_0 (x) + \sum\limits_{t=0}^{t_{\ell} (x) - 1} \left \{ \bar{G}_{t+1} (x) - \bar{G}_t (x) \right \} \,  .
\end{align*}
In particular, if $x$ does not belong to any transition interval, then $t_{\ell} (x) = \ell$ for all $\ell \geq 0$ and so 
\[
\lim\limits_{\ell \to \infty} \widehat{G}_{\ell} (x) = \lim\limits_{\ell \to \infty} \overline{G}_{\ell} (x) = \overline{G}(x) \, ,
\]
and if $x$ belongs in a transition interval in $\mathcal{K}_t$, then
\[
\lim\limits_{\ell \to \infty} \widehat{G}_{\ell} (x) = \overline{G_{t}} (x) \, .
\]
Thus we may define
\[
\widehat{G} (x) \equiv \lim\limits_{\ell \to \infty} \widehat{G}_{\ell} (x) 
\]
pointwise everywhere on $[0,1)$.
As in Figure \ref{fig:remodeling}, the function $\hat{G}$ is ``nearly'' a transplantation of $\tilde{G}$. We note that if $\widetilde{G}$ is a $\mathcal{D}$-dyadic step function with step size $2^{-k}$, then $\widehat{G}_{k+\ell} =\widehat{G}_{k}$ for all $\ell \geq 0$.

Supervisors are \textit{a priori} only defined for intervals in  $\mathcal{K} ([0,1))$. We extend the notion of supervisor to any interval in  the grid $\mathcal{K}$, which is the periodic extension of $\mathcal{K} ([0,1))$ to $\mathbb{R}$. We do this by defining
\[
	\supr(J) \equiv \supr(J \operatorname{mod} 1) \, ,
\]
where $J \operatorname{mod} 1$ is the integer translate of $J$ with left endpoint in $[0,1)$, and so $J \operatorname{mod} 1$ is contained in $[0,1)$. Using this extension of supervisor, the formula
\begin{align}\label{eq:trans_modification_periodic}
  \hat{G}_{\ell}  &  \equiv\left(  E_{[0,1)} \tilde{G} \right)  \mathbf{1}_{[0,1)}
	+\sum\limits_{t=0}^{\ell-1}\sum\limits_{R \in \mathcal{D}_t ([0,1))} \langle
\widetilde{G},h_{R}\rangle\frac{1}{\sqrt
	{\left\vert R \right\vert }} \sum\limits_{Q\in\widehat{\mathcal{K}}_{t} ~:~ \supr(Q) = R} \hat{s}_{k_{t+1}}%
^{Q}
\end{align}
extends \eqref{eq:trans_modification}  periodically to $\mathbb{R}$. This also extends $\hat{G}= \lim\limits_{\ell \to \infty} \hat{G}_{\ell}$ periodically to $\mathbb{R}$. Finally, given $I \in \mathcal{D}_t ([0,1))$, define the periodic \textbf{oscillatory} function 
\begin{align} \label{eq:osc_1d_def} 
  \operatorname{Osc}_{k_{t+1}} ^I \equiv \sum\limits_{J \in \mathcal{K}_t ~:~ \supr (J) = I} \hat{s}_{k_{t+1}} ^{J} \, ,
\end{align}
so that by \eqref{eq:trans_modification_periodic} and \eqref{eq:osc_1d_def},  the (modified) remodeled martingale differences are given by
\begin{align}\label{eq:remodeled_martingale_diff_transition}
  \hat{G}_{t+1} -\hat{G}_t = \sum\limits_{ I \in \mathcal{D}_t ([0,1)) }  \langle \tilde{G}, h_{I} \rangle \frac{1}{\sqrt{ \left | I \right |}} \operatorname{Osc}_{k_{t+1}} ^I  \, ;
\end{align}
see Figure \ref{fig:Osc_func} below for a depiction of the function $\operatorname{Osc}_{k_{t+1}} ^I$. We also periodically extend the grid $\widehat{K}$ and transition intervals in the natural way. 
\begin{figure}[ht]
  \fbox{\includegraphics[width=\linewidth]{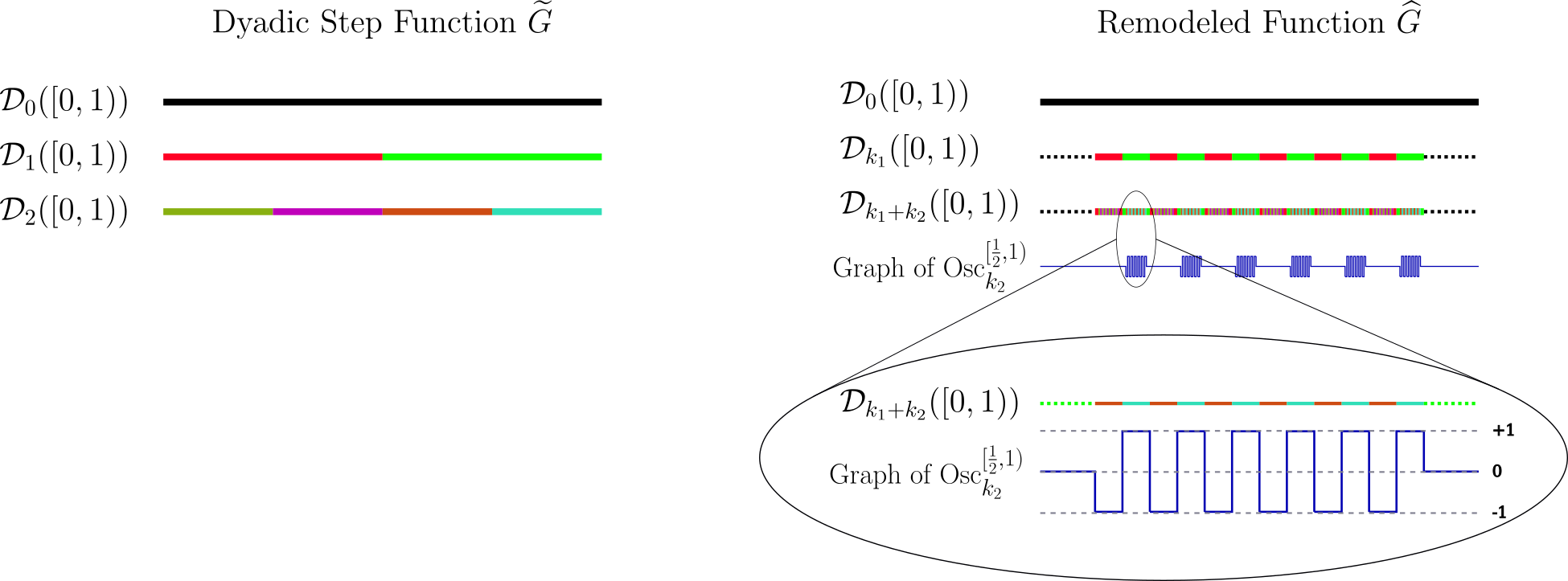}}
  \caption{Graph of $\operatorname{Osc}^{\left [\frac{1}{2},1 \right )}_{k_2}$ relative to stopping and transition intervals.}
\label{fig:Osc_func}
\end{figure}

Remodeling with transition intervals yields continuous doubling, as proved in \cite[Section A.4]{AlLuSaUr} and \cite{Naz2}, claimed in \cite[Section 4]{NaVo}, and proved for an analogous construction in \cite[Section 6.1.2]{KaTr}. The proof below however involves case-chasing: we recommend the reader draw a picture.  

\begin{lemma}[{\cite{NaVo,KaTr,AlLuSaUr, Naz2}}]\label{lem:doubling_remodeling}
  Let $\epsilon \in (0, \frac 1 2)$. There exists $\tau(\epsilon) \in (0, \frac 1 2)$ such that if a positive function  $\tilde{G} \in L^{\infty} ([0,1))$ is $\tau$-doubling on $\mathcal{D}([0,1))$, then $\hat{G}$ is $\epsilon$-doubling.
\end{lemma}
\begin{proof}
    It suffices to show that for every $\epsilon \in (0, \frac 1 2)$, if a positive function  $\tilde{G} \in L^{\infty} ([0,1))$ is $\tau$-doubling on $\mathcal{D}([0,1))$ for $\tau = \frac{\epsilon}{4}$, then for all pairs of adjacent dyadic intervals $S_{-}$ and $S_{+}$ of same length, we have 
   \begin{equation}\label{eq:to_check_doubling}
   (1+\epsilon)^{-1} E_{S_{+}} \widehat{G}\leq E_{S_{-}} \widehat{G}  \leq (1 + \epsilon) E_{S_{+}} \widehat{G} \, . 
   \end{equation}
If $\ell (S_{\pm}) \geq  1$, then
   \[
   E_{S_{\pm}} \widehat{G} = E_{[0,1)} \widehat{G} = E_{[0,1)} \tilde{G} \, , 
   \]
   and so \eqref{eq:to_check_doubling} holds trivially. So assume now $\ell (S_{\pm}) <  1$. 

If both $S_{-}$ and $S_{+}$ belong to $\widehat{\mathcal{K}}_t$ for some $t \geq 1$, then 
\[
\pi_{\mathcal{D}} \supr (S_{-}) = \pi_{\mathcal{D}} \supr (S_{+}) \, .
\]
By $\tau$-doubling of $\tilde{G}$, we then have 
\[
E_{S_{-}} \widehat{G} = E_{\supr(S_{-})} \tilde{G} \leq (1+ \tau) E_{\pi_{\mathcal{D}} \supr(S_{-})} \tilde{G} \leq (1 + \tau)^2 E_{ \supr(S_{+})} \tilde{G} = (1 + \tau)^2 E_{ S_{+}} \widetilde{G} \, , 
\]
and similarly
\[
 E_{ S_{+}} \widetilde{G}\leq (1 + \tau)^2 E_{ S_{-}} \widetilde{G} \, .
\]
If $\tau = \frac{\epsilon}{4}$, then \eqref{eq:to_check_doubling} holds.

If only one of the two intervals $S_{-}$ and $S_{+}$ belongs to $\widehat{\mathcal{K}}_t$, say $S_{-}$, then $S_+$ must be a transition interval. Then $S_{-}$ and $S_+$ must have the same $\mathcal{K}$-parent, and so by $\tau$-doubling of $\widetilde{G}$,
\[
E_{S_{-}} \widehat{G} = E_{\supr(S_{-})} \tilde{G} \leq (1+\tau) E_{\pi_{\mathcal{D}}\supr(S_{-})} \tilde{G} \leq (1+\tau) E_{\supr( \pi_{\mathcal{K}}S_{-})} \tilde{G} = (1+\tau) E_{\supr( \pi_{\mathcal{K}}S_{+})} \tilde{G}  = (1+ \tau) E_{S_+} \widehat{G} \, ,
\]
and similarly
\[
E_{S_+} \widehat{G} \leq (1+\tau) E_{S_{-}} \widehat{G} \, , 
\]
and so \eqref{eq:to_check_doubling} holds if $\tau = \frac \epsilon 4$.

If both $S_{-}$ and $S_+$ belong to $\mathcal{K}_t \setminus \widehat{\mathcal{K}}_t$, then 
 $S_{-}$ and $S_{+}$ are contained within transition intervals $T_{-}$ and $T_{+}$. By Lemma \ref{lemma:adj_trans}, one of two things can happen. If $T_{-}$ and $T_+$ belong to $\mathcal{K}_1$, then by periodicity of $\widehat{G}$,  
 \[
E_{S_{\pm}} \widehat{G} =  E_{T_{\pm}} \widehat{G} = E_{[0,1)} \widehat{G}
 \]
and so \eqref{eq:to_check_doubling} holds. If on the other hand 
 $T_{-}$ and $T_+$ have a common $\mathcal{K}$-ancestor $K$,  it is at most $2$ levels above either one with respect to $\mathcal{K}$, and so
\[
E_{S_{-}} \widehat{G} = E_{\pi_{\mathcal{K}} T_{-}} \widehat{G} = E_{\supr ( \pi_{\mathcal{K}} T_{-})} \tilde{G} \leq (1 + \tau) E_{\supr ( K) } \tilde{G}  \leq (1+ \tau)^2 E_{ \supr ( \pi_{\mathcal{K}} T_{+})} \tilde{G} = E_{\pi_{\mathcal{K} T_{+}}} \widehat{G} = E_{S_{+}} \widehat{G} \, . 
\]
Similarly, we have
\[
E_{S_{+}} \widehat{G} \leq (1 + \tau)^2 E_{S_{-}} \widehat{G} \, , 
\]
and so \eqref{eq:to_check_doubling} holds for $\tau = \frac \epsilon 4$.

Thus \eqref{eq:to_check_doubling} holds whenever $S_{-}$ or $S_+$ belongs to $\mathcal{K}$, as well as whenever $\ell (S_{\pm}) \geq 0$. If neither $S_{-}$ nor $S_+$ belong to $\mathcal{K}$ and both have sidelength less than $1$, then let $t$ be the smallest nonnegative integer for which
\[
\ell (S_{\pm}) \leq 2^{-(k_0 + \ldots +k_t)} \, .
\]
Let $K_{-}$ and $K_+$ denote smallest intervals in $\mathcal{K}_t$ containing $S_{-}$ and $S_+$, respectively. Then either $K_{-} = K_+$, or they are adjacent. Using the fact that
\[
E_{S_{-}} \widehat{G} = E_{K_{-}} \widehat{G} \, , \qquad E_{S_{+}} \widehat{G} = E_{K_{+}} \widehat{G}
\]
and that \eqref{eq:to_check_doubling} holds for $K_{-}$ and $K_+$ then yields \eqref{eq:to_check_doubling} for $S_{-}$ and $S_{+}$. 
\end{proof}

The following says that $\hat{G}_t$ is essentially constant at scales near its step size if $\tilde{G}$ is dyadically doubling. 
\begin{lemma}\label{lem:remodeling_loc_cst}
Let $\tau \in (0, \frac 1 2)$.
	If $Q$ is an interval and $t$ is smallest nonnegative integer such that 
    \begin{equation}\label{eq:size_cond_Q_0}
    2^{-(k_0 + k_{1}+k_{2}+...+k_{t})}\leq\ell\left(  Q\right) \, ,
    \end{equation}
    then there exists an interval $Q^*  \in \bigcup\limits_{\ell=0}^t\mathcal{K}_{\ell}$, such that for all positive $\widetilde{G} \in L^{\infty} ([0,1))$ that are $\tau$-doubling on $\mathcal{D}([0,1))$ and for all $x \in Q$, we have
  \[
    \hat{G}_t (x) \approx E_{\supr (Q^*)} \tilde{G} \, . 
  \]
\end{lemma}

\begin{proof}

	\textbf{Case $t=0$:} if $t=0$, then $\hat{G}_0 = E_{[0,1)} \tilde{G}$ is constant, so setting $Q^* \equiv [0,1)$ suffices.
    
   \textbf{Case $t=1$:} if $t = 1$, then $\hat{G}_t$ takes values in the set 
   \[
   \left \{ E_{[0,1)} \tilde{G}, E_{\left [ 0,\frac{1}{2}\right )} \tilde{G}, E_{\left [ \frac{1}{2},1\right )} \tilde{G} \right \} \, .
   \]All elements of this set are $\approx E_{[0,1)} \tilde{G}$ by dyadic $\tau$-doubling of $\tilde{G}$. So set $Q^* \equiv [0,1)$.

	\textbf{Case $t \geq 2$:} we suggest Figure \ref{fig:locally_constant} as a visual aid for the reasoning for this case, as it illustrates some of the subcases we consider when $t=3$. Here, dotted green intervals belong in $\mathcal{T}_2$ and dotted black intervals belong in $\mathcal{T}_1$.  
	\begin{enumerate}
		\item \textbf{Subcase where $Q$ does not intersect any interval in $\bigcup\limits_{\ell=1}^{t-1} \mathcal{T}_{\ell}$:} then $Q$ is contained in some $Q^* \in \widehat{\mathcal{K}}_{t-2}$. But $\hat{G}_t$ is constant on all $\mathcal{K}$-grandchildren of $Q^*$, taking the values 
        \begin{equation}\label{eq:values_G}
        E_{\supr (Q^*)} \tilde{G} \, , \quad E_{\supr (Q^*)_{\pm} } \tilde{G} \, , \quad E_{\supr (Q^*)_{\pm \pm} } \tilde{G} 
        \end{equation}
        on the transition intervals for $Q^*$, on the transition intervals for children of $Q^*$ within $\hat{\mathcal{K}}_t$, and on the grandchildren of $Q^*$ in $\hat{\mathcal{K}}_t$, respectively. By dyadic doubling of $\tilde{G}$, these values are $\approx E_{\supr (Q^*)} \tilde{G}$.
		\item \textbf{Subcase where $Q$ intersects only one interval in $\bigcup\limits_{\ell=1}^{t-1} \mathcal{T}_{\ell}$:} call this interval $T$.
			\begin{enumerate}
				\item \textbf{Subsubcase where $Q \subseteq T$:} since $\hat{G}_t$ has constant value $E_{ \pi_{\mathcal{K}} T} \tilde{G}$ on $T$, then take $Q^* \equiv \pi_{\mathcal{K}} T$.
				\item \textbf{Subsubcase where $Q \not \subseteq T$:} then $Q$ also intersects some other transition interval $T'$ in $\bigcup\limits_{\ell=1}^{t} \mathcal{T}_{\ell}$ that is adjacent to $T$. By assumption, $Q$ does not intersect any other interval in $\bigcup\limits_{\ell=1}^{t-1} \mathcal{T}_{\ell}$, and so $T' \in \mathcal{T}_{t}$, and  $Q^* \equiv \pi_{\mathcal{K}} T$  contains $Q$. On $Q^*$, the function $\hat{G}_t$  can only take the values specified by  \eqref{eq:values_G}, all of which are  $\approx E_{\supr (Q^*)} \tilde{G}$ by dyadic doubling.
 
			\end{enumerate}
		\item \textbf{Subcase where $Q$ intersects two intervals in $\bigcup\limits_{\ell=1}^{t-1} \mathcal{T}_{\ell}$:}  call these intervals $T_1$ and $T_2$, which must be adjacent. Then $Q \subseteq T_1 \cup T_2$ by the size condition \eqref{eq:size_cond_Q_0}. By Lemma \ref{lemma:adj_trans}, we have the following subsubcases. 
			\begin{enumerate}
				\item \textbf{Subsubcase where the common $\mathcal{K}$-ancestor of $T_1$ and $T_2$ exists:} let $Q^*$ be this common ancestor. By Lemma \ref{lemma:adj_trans} and then dyadic doubling of $\tilde{G}$, for all $x \in T_1$ we have 
                \[
                \hat{G}_t(x) = E_{\supr (\pi_{\mathcal{K}} K_1 )} \tilde{G}\approx E_{\supr (Q^*)} \tilde{G} \, ,
                \]
                similarly for $T_2$.
				\item \textbf{Subsubcase no common $\mathcal{K}$-ancestor of $T_1$ and $T_2$ exists:}  set $Q^* \equiv [0,1)$. By Lemma \ref{lemma:adj_trans}, we have $T_1, T_2 \in \mathcal{K}_1$. Thus for $x \in T_1 \cup T_2$, we have
                \[
                \hat{G}_t (x) = E_{[0,1)} \tilde{G} = E_{\supr (Q^*)} \tilde{G} \, . \qedhere
                \]
			\end{enumerate}
	\end{enumerate}
	\end{proof}

\begin{figure}[ht]
  \fbox{\includegraphics[width=\linewidth]{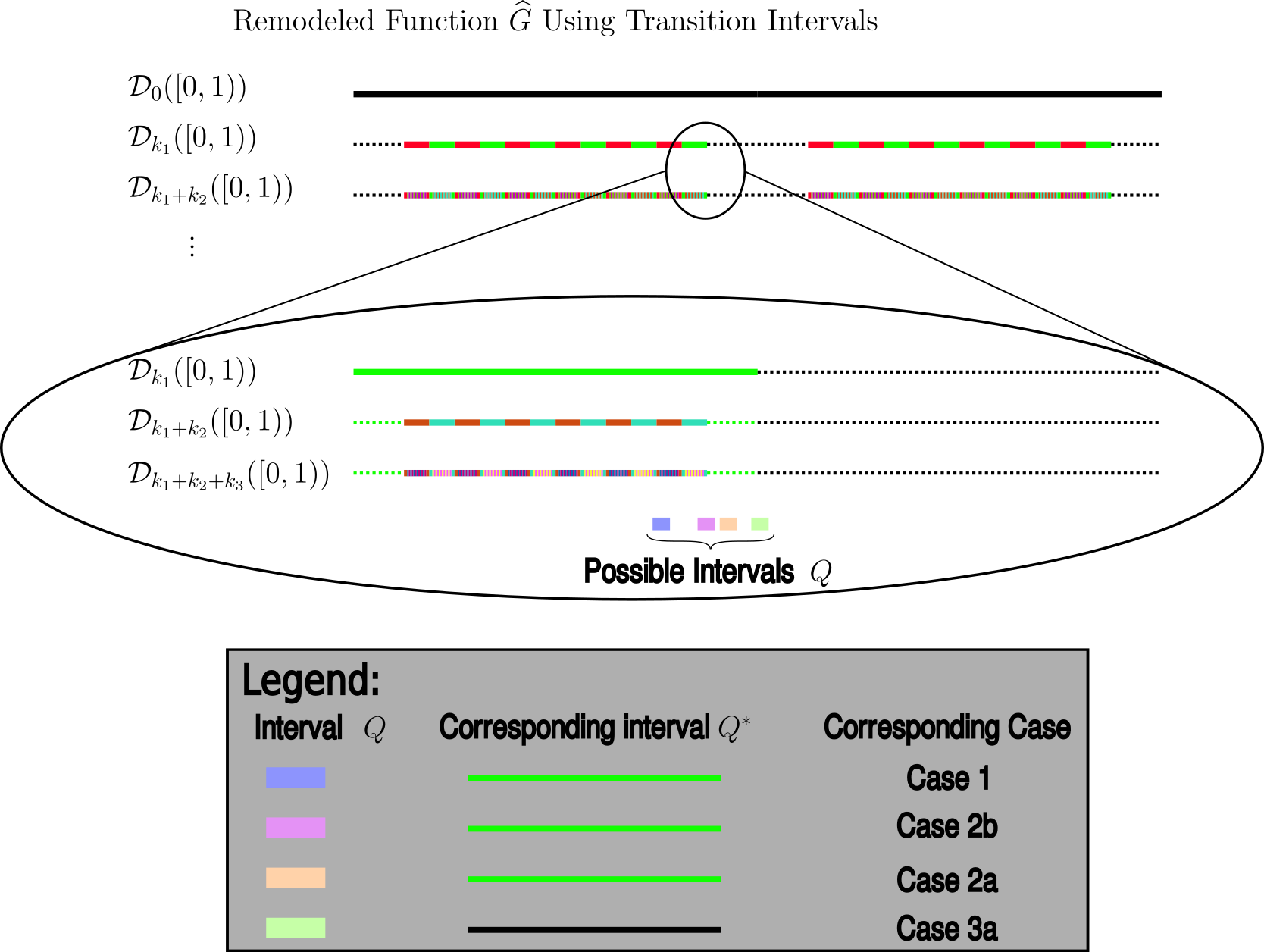}}
  \caption{Some of the cases occuring in the proof of Lemma \ref{lem:remodeling_loc_cst}, when $t=3$}
\label{fig:locally_constant}
\end{figure}

\begin{remark}
  In Lemma \ref{lem:remodeling_loc_cst} the condition $x \in Q$ is easily replaced by $x \in 3Q$ or $x \in 5Q$, or really $x$ in any fixed constant dilate of $Q$. Furthermore, the choice of $Q^*$ in Lemma \ref{lem:remodeling_loc_cst} is neither unique nor canonical.
\end{remark}

\subsection{Effects of remodeling on Muckenhoupt conditions}\label{subsection:remodeling_Muckenhoupt}
If $\tilde{u} \in L^{\infty} ([0,1))$, then $\bar{u}$ denotes the remodeled function \emph{without}  transition intervals, and $\widehat{u}$ denotes the periodic extension of the remodeled function \emph{with} transition intervals. The result below says that remodeling preserves the $A_p$ condition, as noted when $p=2$ in \cite[Section 4]{NaVo}, \cite[Remark A.16 combined with doubling]{AlLuSaUr}, \cite{Naz2} and for general $p$ in \cite[Section 6.1.1 combined with doubling in Section 6.1.2]{KaTr}.

\begin{lemma}[{\cite{NaVo, KaTr,AlLuSaUr, Naz2}}] \label{lem:remodeling_Ap}
  Let $\tau \in \left ( 0, \frac{1}{2} \right )$. If $\tilde{\sigma}$ and $\tilde{\omega}$ are $\tau$-doubling on $\mathcal{D}([0,1))$, then 
  \[
   A_{p} (\hat{\sigma}, \hat{\omega})\lesssim A_{p} ^{\mathcal{D}([0,1))} (\tilde{\sigma}, \tilde{\omega}) \, . 
  \] 
\end{lemma}

\begin{proof}
  Since $\hat{\sigma}$ and $\hat{\omega}$ are doubling by Lemma \ref{lem:doubling_remodeling} and periodic, we have 
\[
  A_p  (\hat{\sigma}, \hat{\omega}) \approx A_p ^{\mathcal{D} ([0,1))} (\hat{\sigma}, \hat{\omega}) \, .
\]
By Proposition \ref{prop:measure_preserving_avgs_2} part (\ref{averages_same_2}) and dyadic doubling,
\[
A_p ^{\mathcal{D}([0,1))} (\hat{\sigma}, \hat{\omega})\lesssim A_p ^{\mathcal{K} ([0,1))} (\hat{\sigma}, \hat{\omega}) \, .
\]
Let $Q \in \mathcal{K} ( [0,1))$. If $Q \in \hat{\mathcal{K}}$, then  
\[
E_Q \hat{\sigma}  = E_{\supr (Q)} \widetilde{\sigma} \, , \qquad E_Q \hat{\omega} = E_{\supr (T)} \tilde{\omega} \, .
\]
And if $Q \not\in \hat{\mathcal{K}}$, then $Q$ must be contained in a transition cube $T$, and so on $Q$ we have 
\[
\hat{\sigma} = E_{\supr (T)} \widetilde{\sigma} \, , \qquad \hat{\omega} = E_{\supr (T)} \widetilde{\omega} \, .
\]
Hence
\[
  A_p ^{\mathcal{K} ([0,1))} (\hat{\sigma}, \hat{\omega}) \leq \sup\limits_{K \in \mathcal{K}([0,1))]} E_{\supr(K)} \tilde{\sigma} E_{\supr(K)} \tilde{\omega}  \leq A_p ^{\mathcal{D} ([0,1))} (\tilde{\sigma}, \tilde{\omega}) \, . \qedhere 
\]
\end{proof}

In contrast, the following new contribution of this paper  says that under remodeling, the $A_{p} ^{\ell^2, \operatorname{local}, \mathcal{D}([0,1))}$ characteristic can only get larger. Here, the condition on $\{k_j\}$ is just that each $k_j$ has to be sufficiently large depending on the weights and $\epsilon$. 
\begin{proposition} \label{prop:remodeling_quad_Muck}
	If $\tilde{\sigma}$ and $\tilde{\omega}$ are weights in $L^{\infty} ([0,1))$, then 
    \[
      A_{p} ^{\ell^2, \operatorname{local}, \mathcal{D}([0,1))} (\tilde{\sigma}, \tilde{\omega}) \leq A_{p} ^{\ell^2, \operatorname{local}, \mathcal{D}([0,1))} (\bar{\sigma}, \bar{\omega}) \, . 
  \]
	Furthermore, there exists a sequence $\{F_j\}_{j \geq 1}$ of functions such that for any $\epsilon > 0$, if $k_j \geq F_j (\tilde{\sigma}, \tilde{\omega}, \epsilon)$, 
 then 
  \begin{align}\label{eq:Ap_quad_inc_remodeling}
  (1- \epsilon) A_{p} ^{\ell^2, \operatorname{local}, \mathcal{D}([0,1))} (\tilde{\sigma}, \tilde{\omega}) \leq A_{p} ^{\ell^2, \operatorname{local}, \mathcal{D}([0,1))} (\hat{\sigma}, \hat{\omega}) \, . 
  \end{align}
\end{proposition}
\begin{proof}
  Let $\epsilon > 0$, and let $\{a_J\}_{J \in \mathcal{D} ( [0,1))}$ be an almost-extremizing sequence for $A_p ^{\ell^2, \operatorname{local}, \mathcal{D}([0,1))} (\widetilde{\sigma}, \tilde{\omega})$, i.e., let $\{a_J\}_{J \in \mathcal{D} ([0,1))}$ be a sequence with only finitely many nonzero terms such that
\[
  \left \| \left \{ \sum\limits_{J \in \mathcal{D}([0,1))} a_J ^2 \left ( E_{J} \tilde{\sigma} \right )^2 \mathbf{1}_J \right \} ^{\frac{1}{2}} \right \|_{L^p (\tilde{\omega})} > \left ( 1- \frac{\epsilon}{2} \right ) A_{p} ^{\ell^2, \operatorname{local}, \mathcal{D}([0,1))} (\tilde{\sigma}, \tilde{\omega}) \left \| \left \{ \sum\limits_{J \in \mathcal{D} ([0,1))} a_J ^2 \mathbf{1}_J \right \} ^{\frac{1}{2}} \right \|_{L^p (\tilde{\sigma})} \, .
\]
Since $\{a_J\}$ only has finitely many nonzero terms, there exists a positive integer $m = m(\epsilon, \tilde{\sigma}, \tilde{\omega})$ for which
\[
  \left \|  \left \{ \sum\limits_{t=0}^{m} \sum\limits_{J \in \mathcal{D}_t ([0,1))} a_J ^2 \left ( E_{J} \tilde{\sigma} \right )^2 \mathbf{1}_J \right \} ^{\frac{1}{2}} \right \|_{L^p (\tilde{\omega})} > \left ( 1- \frac{\epsilon}{2} \right ) A_{p} ^{\ell^2, \operatorname{local}, \mathcal{D}([0,1))} (\tilde{\sigma}, \tilde{\omega}) \left \| \left \{ \sum\limits_{t=0}^{m} \sum\limits_{J \in \mathcal{D}_t ([0,1))} a_J ^2 \mathbf{1}_J \right \} ^{\frac{1}{2}} \right \|_{L^p (\tilde{\sigma})} \, .
\]

We first show, using this sequence $\{a_J\}_{J \in \mathcal{D} ([0,1)}$, that
\[
  A_{p} ^{\ell^2, \operatorname{local}, \mathcal{D} ([0,1))} (\bar{\sigma}, \bar{\omega} ) > \left ( 1- \frac{\epsilon}{2} \right ) A_{p} ^{\ell^2, \operatorname{local}, \mathcal{D}([0,1))} (\tilde{\sigma}, \tilde{\omega}) \, ,
\]
which, by taking $\epsilon \to 0$, will prove the first statement in Proposition \ref{prop:remodeling_quad_Muck}. By \eqref{eq:indicators_Phi_2}, we write
\begin{multline*}
\left \| \left \{ \sum\limits_{t=0}^{m} \sum\limits_{J \in \mathcal{D}_t ( [0,1)) } \sum\limits_{K \in \mathcal{K}_t ( [0,1)) ~:~ \supr (K) = J} a_J ^2 \left ( E_{K} \bar{\sigma}\right )^2 \mathbf{1}_K \right \} ^{\frac{1}{2}} \right \|_{L^p ( \bar{\omega})} \\
  = \left \| \left \{  \sum\limits_{t=0}^{m} \sum\limits_{J \in \mathcal{D}_t ([0,1)) }  a_J ^2 \left ( E_{J} \tilde{\sigma} \right )^2 \mathbf{1}_{J} \circ \Phi \right \} ^{\frac{1}{2}} \right \|_{L^p (\tilde{\omega} \circ \Phi)} = \left \| \left \{ \sum\limits_{t=0}^{m} \sum\limits_{J \in \mathcal{D}_t ([0,1)) }  a_J ^2 \left ( E_{J} \tilde{\sigma} \right )^2 \mathbf{1}_{J}\right \} ^{\frac{1}{2}} \right \|_{L^p (\tilde{\omega})} \, ,
\end{multline*}
where in the last step we used that $\Phi$ is measure-preserving. Similarly, we have
\begin{align*}
  &\left \| \left \{ \sum\limits_{t=0}^{m}  \sum\limits_{J \in \mathcal{D}_t ( [0,1)) } \sum\limits_{K \in \mathcal{K}_t ( [0,1)) ~:~ \supr (K) = J} a_J ^2 \mathbf{1}_K \right \} ^{\frac{1}{2}} \right \|_{L^p ( \bar{\sigma})} =\left \| \left \{ \sum\limits_{t=0}^{m}  \sum\limits_{J \in \mathcal{D}_t ([0,1))} a_J ^2 \mathbf{1}_J \right \} ^{\frac{1}{2}} \right \|_{L^p (\tilde{\sigma})} \ ,
\end{align*}
and so
\[
  A_{p} ^{\ell^2, \operatorname{local}, \mathcal{D} ([0,1))} (\bar{\sigma}, \bar{\omega}) > \left ( 1- \frac{\epsilon}{2} \right ) A_{p} ^{\ell^2, \operatorname{local}, \mathcal{D}([0,1))} (\tilde{\sigma}, \tilde{\omega}) \, .
\]

Now let us see how to get the desired result for the weights $\hat{\sigma}$ and $\hat{\omega}$; to do so we will use the same $\{a_J\}_J$ above and the computations we did for $\bar{\sigma}$ and $\bar{\omega}$. Consider
\[
  \left \| \left \{ \sum\limits_{t=0}^{m} \sum\limits_{J \in \mathcal{D}_t ( [0,1)) } \sum\limits_{K \in \hat{\mathcal{K}}_t ( [0,1)) ~:~ \supr (K) = J} a_J ^2 \left ( E_{K} \hat{\sigma}\right )^2 \mathbf{1}_K \right \} ^{\frac{1}{2}} \right \|_{L^p ( \hat{\omega})} \, .
\]
Since the integrand 
\[
\left \{ \sum\limits_{t=0}^{m} \sum\limits_{J \in \mathcal{D}_t ( [0,1)) } \sum\limits_{K \in \hat{\mathcal{K}}_t ( [0,1)) ~:~ \supr (K) = J} a_J ^2 \left ( E_{K} \hat{\sigma}\right )^2 \mathbf{1}_K \right \} ^{\frac{1}{2}} 
\]
is constant on cubes in the set $\left ( \bigcup\limits_{\ell=0}^m \mathcal{T}_{\ell} \right ) \cup \hat{\mathcal{K}}_m$, on which $\hat{\omega}$ has the same averages as $\bar{\omega}$, and similary for $\hat{\sigma}$ and $\bar{\sigma}$, we get 
\begin{align*}
  &\left \| \left \{ \sum\limits_{t=0}^{m} \sum\limits_{J \in \mathcal{D}_t ( [0,1)) } \sum\limits_{K \in \hat{\mathcal{K}}_t ( [0,1)) ~:~ \supr (K) = J} a_J ^2 \left ( E_{K} \hat{\sigma}\right )^2 \mathbf{1}_K \right \} ^{\frac{1}{2}} \right \|_{L^p ( \hat{\omega})} \\
= &\left \| \left \{ \sum\limits_{t=0}^{m} \sum\limits_{J \in \mathcal{D}_t ( [0,1)) } a_J ^2 \left ( E_{J} \tilde{\sigma}\right )^2 \sum\limits_{K \in \hat{\mathcal{K}}_t ( [0,1)) ~:~ \supr (K) = J}  \mathbf{1}_K \right \} ^{\frac{1}{2}} \right \|_{L^p ( \hat{\omega})} \\
= &\left \| \left \{ \sum\limits_{t=0}^{m} \sum\limits_{J \in \mathcal{D}_t ( [0,1)) } a_J ^2 \left ( E_{J} \tilde{\sigma}\right )^2 \sum\limits_{K \in \hat{\mathcal{K}}_t ( [0,1)) ~:~ \supr (K) = J}  \mathbf{1}_K \right \} ^{\frac{1}{2}} \right \|_{L^p ( \bar{\omega})} \\
= &\left \| \left \{ \sum\limits_{t=0}^{m} \sum\limits_{J \in \mathcal{D}_t ( [0,1)) } a_J ^2  \sum\limits_{K \in \hat{\mathcal{K}}_t ( [0,1)) ~:~ \supr (K) = J}  \left ( E_{K} \bar{\sigma}\right )^2\mathbf{1}_K \right \} ^{\frac{1}{2}} \right \|_{L^p ( \bar{\omega})} .
 \end{align*}
 Compare this last term to the same quantity where the inner-most sum is over $\mathcal{K}_t$ instead of $\hat{\mathcal{K}}_t$, and using the reverse triangle inequality for the mixed $L^p (\ell^2, \bar{\omega})$-norm, we get that the above is at least  
 \begin{align*}
&\left \| \left \{ \sum\limits_{t=0}^m \sum\limits_{J \in \mathcal{D}_t ( [0,1)) } \sum\limits_{K \in \mathcal{K}_t ( [0,1)) ~:~ \supr (K) = J} a_J ^2 \left ( E_{K} \bar{\sigma}\right )^2 \mathbf{1}_K \right \} ^{\frac{1}{2}} \right \|_{L^p ( \bar{\omega})}\\
-&\left \| \left \{ \sum\limits_{t=0}^{m} \sum\limits_{J \in \mathcal{D}_t ( [0,1)) } a_J ^2  \sum\limits_{K \in \left \{ \mathcal{K}_t ( [0,1)) \setminus \hat{\mathcal{K}}_t ( [0,1)) \right \} ~:~ \supr (K) = J}  \left ( E_{K} \bar{\sigma}\right )^2\mathbf{1}_K \right \} ^{\frac{1}{2}} \right \|_{L^p ( \bar{\omega})} \, . 
 \end{align*}
 Since we already showed that the first term equals
 \[
   \left \| \left \{ \sum\limits_{t=0}^m  \sum\limits_{J \in \mathcal{D}_t ([0,1)) }  a_J ^2 \left ( E_{J} \tilde{\sigma} \right )^2 \mathbf{1}_{J}\right \} ^{\frac{1}{2}} \right \|_{L^p (\tilde{\omega})} \, ,
 \]
  we now show the second term is small if  $k_1, \ldots, k_m$ are all sufficiently large. Indeed, we use the fact that $\|\bar{\sigma}\|_{\infty} \leq \|\tilde{\sigma}\|_{\infty}$, and similary for $\bar{\omega}$, and that $|a_J| \leq C(\epsilon, \tilde{\sigma}, \tilde{\omega})$, to say the second term is at most
 \begin{align*}
   &C(\epsilon, \tilde{\sigma}, \tilde{\omega}) \left \| \left \{ \sum\limits_{t=0}^{m} \sum\limits_{J \in \mathcal{D}_t ( [0,1)) } \sum\limits_{K \in \left \{ \mathcal{K}_t ( [0,1)) \setminus \hat{\mathcal{K}}_t ( [0,1)) \right \} ~:~ \supr (K) = J}  \mathbf{1}_K \right \} ^{\frac{1}{2}} \right \|_{L^p (dx)} \\
   \leq &C(\epsilon, \tilde{\sigma}, \tilde{\omega}) \left \| \sum\limits_{t=0}^{m} \sum\limits_{J \in \mathcal{D}_t ( [0,1)) } \sum\limits_{K \in \left \{ \mathcal{K}_t ( [0,1)) \setminus \hat{\mathcal{K}}_t ( [0,1)) \right \} ~:~ \supr (K) = J}  \mathbf{1}_K  \right \|_{L^p (dx)} \, .
 \end{align*}
 Since the integrand has $L^{\infty}$-norm at most $m = m(\epsilon, \tilde{\sigma}, \tilde{\omega})$, then by H\"older's inequality it suffices to show the support of the integrand can be made arbitrarily small. The support of the function has Lebesgue measure at most
 \[
   \sum\limits_{\ell=1}^m \left | T_{\ell} ([0,1)) \right |\leq \sum\limits_{\ell=1}^m 2^{-k_{\ell}} \prod\limits_{j=1}^{\ell-1} \left ( 1 - 2 \cdot 2^{-k_j} \right ) \leq \sum\limits_{\ell=1}^m 2^{-k_{\ell}} \, . 
 \]
 By choosing $k_1, \ldots, k_m$ sufficiently large, the Lebesgue measure of the support can be made arbitrarily small. Thus we have shown that
 \[
   \left \| \left \{ \sum\limits_{t=0}^{m} \sum\limits_{J \in \mathcal{D}_t ( [0,1)) } \sum\limits_{K \in \hat{\mathcal{K}}_t ( [0,1)) ~:~ \supr (K) = J} a_J ^2 \left ( E_{K} \hat{\sigma}\right )^2 \mathbf{1}_K \right \} ^{\frac{1}{2}} \right \|_{L^p ( \hat{\omega})}
 \]
is at most 
\[
   \left \| \left \{ \sum\limits_{t=0}^m  \sum\limits_{J \in \mathcal{D}_t ([0,1)) }  a_J ^2 \left ( E_{J} \tilde{\sigma} \right )^2 \mathbf{1}_{J}\right \} ^{\frac{1}{2}} \right \|_{L^p (\tilde{\omega})} \, ,
 \]
 minus some term which goes to $0$ as $k_1, \ldots, k_m$ are all chosen sufficiently large. Similarly, we have
\[
   \left \| \left \{ \sum\limits_{t=0}^{m} \sum\limits_{J \in \mathcal{D}_t ( [0,1)) } \sum\limits_{K \in \hat{\mathcal{K}}_t ( [0,1)^2) ~:~ \supr (K) = J} a_J ^2  \mathbf{1}_K \right \} ^{\frac{1}{2}} \right \|_{L^p ( \hat{\sigma})}
 \]
is at least 
\[
   \left \| \left \{ \sum\limits_{t=0}^m  \sum\limits_{J \in \mathcal{D}_t ([0,1)) }  a_J ^2  \mathbf{1}_{J}\right \} ^{\frac{1}{2}} \right \|_{L^p (\tilde{\sigma})} \, ,
 \]
 plus some term which goes to $0$ as $k_1, \ldots, k_m$ are all chosen sufficiently large. Thus we can choose $k_1, \ldots, k_m$ sufficiently large so that
 \begin{align*}
    \left \| \left \{ \sum\limits_{t=0}^{m} \sum\limits_{J \in \mathcal{D}_t ( [0,1)) } \sum\limits_{K \in \hat{\mathcal{K}}_t ( [0,1)) ~:~ \supr (K) = J} a_J ^2 \left ( E_{K} \hat{\sigma}\right )^2 \mathbf{1}_K \right \} ^{\frac{1}{2}} \right \|_{L^p ( \hat{\omega})}& \\
    > (1- \epsilon) A_{p} ^{\ell^2, \operatorname{local}, \mathcal{D}([0,1))} (\tilde{\sigma}, \tilde{\omega}) & \left \| \left \{ \sum\limits_{t=0}^{m} \sum\limits_{J \in \mathcal{D}_t ( [0,1)) } \sum\limits_{K \in \hat{\mathcal{K}}_t ( [0,1)) ~:~ \supr (K) = J} a_J ^2  \mathbf{1}_K \right \} ^{\frac{1}{2}} \right \|_{L^p ( \hat{\sigma})}
 \, ,
 \end{align*}
 and so we conclude that $A_p ^{\ell^2, \operatorname{local}, \mathcal{D}([0,1))} (\hat{\sigma}, \hat{\omega}) > (1- \epsilon) A_{p} ^{\ell^2, \operatorname{local}, \mathcal{D}([0,1))} (\tilde{\sigma}, \tilde{\omega})$.
\end{proof}

\section{The quadratic Muckenhoupt condition in the plane and the norm inequality for \texorpdfstring{$R_2$}{R2}}\label{section:Ap_rectangles}

We recall that for doubling measures and Stein elliptic operators like the second Riesz transform $R_2$, the quadratic Muckenhoupt conditions follow from the norm inequality.
\begin{proposition}[{\cite[paragraph preceding Section 4.1]{SaWi}}] \label{lem:norm_rect_quad_Muck}
	Let $1<p<\infty$, let $\sigma$ and $\omega$ be doubling measures on $\mathbb{R}^n$, and let $T$ be Stein elliptic. Then 
	\begin{equation}\label{eq:Ap_quad_necessary}
		A_p ^{\ell^2, \operatorname{local}} (\sigma, \omega) \lesssim \mathfrak{N}_{T, p} \left (\sigma, \omega \right ) \, .
	\end{equation}
\end{proposition}
\begin{proof}
Let $\{J\}$ be a countable sequence of cubes, and let $\{a_J\}$  be a sequence of nonnegative coefficients.

   For any operator $T$, it is well-known that
   \begin{equation}\label{eq:norm_vector_valued}
   \left \| \left \{ \sum\limits_{k} |T_{\sigma} f_k|^2 \right \} ^{\frac 1 2} \right \|_{L^p (\omega)} \leq \mathfrak{N}_{T, p} (\sigma, \omega) \left \| \left \{ \sum\limits_{k} | f_k|^2 \right \} ^{\frac 1 2} \right \|_{L^p (\sigma)}
   \end{equation}
   holds across all sequences of functions $\{f_k\}$.

    Replacing the index $k$ by $J$, and then inserting $\mathbf{1}_J$ outisde the operator, we get \eqref{eq:norm_vector_valued} implies
   \begin{equation}\label{eq:norm_weaker_quad}
   \left \| \left \{ \sum\limits_{J} |T_{\sigma} f_J|^2 \mathbf{1}_J \right \} ^{\frac 1 2} \right \|_{L^p (\omega)} \leq \mathfrak{N}_{T, p} (\sigma, \omega) \left \| \left \{ \sum\limits_{J} | f_J|^2 \right \} ^{\frac 1 2} \right \|_{L^p (\sigma)} \, .
   \end{equation}

  Because $T$ is Stein-elliptic, then for each cube $J^*$ satisfying
  \[
\ell (J) = \ell(J^*) \, , \qquad \ell (J) \approx \operatorname{dist} (J, J^*) \, ,
  \]
  and such that
  \[
|T_{\sigma} \mathbf{1}_{J^*} (x) | \gtrsim E_J \sigma
  \]
  for all $x \in J$.
   Setting
   \[
   f_J = a_J \mathbf{1}_{J^*} \, , 
   \]
   from \eqref{eq:norm_weaker_quad} we then obtain
   \[
    \left \| \left \{ \sum\limits_{J} |a_J|^2 \left ( E_J \sigma\right )^2 \mathbf{1}_J \right \} ^{\frac 1 2} \right \|_{L^p (\omega)} \lesssim \mathfrak{N}_{T, p} (\sigma, \omega) \left \| \left \{ \sum\limits_{J} | a_J|^2 \mathbf{1}_{J^*} \right \} ^{\frac 1 2} \right \|_{L^p (\sigma)} \, .
   \]
Finally, bounding the right side using \eqref{eq:FS_key_application_piece}, the Fefferman-Stein inequality Lemma \ref{lem:FS} and the fact that $\sigma$ and $\omega$ are doubling yields
\[
    \left \| \left \{ \sum\limits_{J} |a_J|^2 \left ( E_J \sigma\right )^2 \mathbf{1}_J \right \} ^{\frac 1 2} \right \|_{L^p (\omega)} \lesssim \mathfrak{N}_{T, p} (\sigma, \omega) \left \| \left \{ \sum\limits_{J} | a_J|^2 \mathbf{1}_{J} \right \} ^{\frac 1 2} \right \|_{L^p (\sigma)} \, .
   \]
   Thus \eqref{eq:Ap_quad_necessary} holds.
\end{proof}

If our measures in $\mathbb{R}^2$ only depend on one of the two coordinates, then the full-dimensional dyadic quadratic Muckenhoupt condition must be at least as large as the one-dimensional one.

\begin{lemma}\label{lem:quad_Muck_tensor}
  Let $1< p < \infty$. If $\hat{\sigma}$ and $\hat{\omega}$ are weights on $\mathbb{R}$ and $\sigma(x_1, x_2) = \hat{\sigma} (x_1)$, $\omega(x_1, x_2) = \hat{\omega} (x_1)$ are weights on $\mathbb{R}^2$, then  
  \[
	  A_{p} ^{\ell^2, \operatorname{local}, \mathcal{D} \left ( \left [0,1 \right )^2 \right) } (\sigma, \omega) \geq A_{p} ^{\ell^2, \operatorname{local}, \mathcal{D} \left ( \left [0,1 \right ) \right)} (\hat{\sigma}, \hat{\omega}) \, . 
  \]
\end{lemma}
\begin{proof}
Let $\epsilon > 0$, and let $\{a_J\}_{J \in \mathcal{D} ( [0,1))}$ be an almost-extremizing sequence for $A_p ^{\ell^2, \operatorname{local}, \mathcal{D}([0,1))} (\hat{\sigma}, \hat{\omega})$, i.e.\ let $\{a_J\}_{J \in \mathcal{D} ([0,1))}$ be a sequence with only finitely many nonzero terms such that
	\begin{align}\label{eq:almost_extreme_Ap_quad_dim_1}
  \left \| \left \{ \sum\limits_{J \in \mathcal{D}([0,1)^2)} a_J ^2 \left ( E_{J} \hat{\sigma} \right )^2 \mathbf{1}_J \right \} ^{\frac{1}{2}} \right \|_{L^p (\hat{\omega})} > \left ( 1- \epsilon \right ) A_{p} ^{\ell^2, \operatorname{local}, \mathcal{D}([0,1))} (\hat{\sigma}, \hat{\omega}) \left \| \left \{ \sum\limits_{J \in \mathcal{D} ([0,1))} a_J ^2 \mathbf{1}_J \right \} ^{\frac{1}{2}} \right \|_{L^p (\hat{\sigma})} \, .
	\end{align}
We show, using this $\{a_J\}_{J \in \mathcal{D} ([0,1)}$, that
	\begin{align*}
  A_{p} ^{\ell^2, \operatorname{local}, \mathcal{D} ([0,1)^2)} (\sigma, \omega ) > \left ( 1- \epsilon \right ) A_{p} ^{\ell^2, \operatorname{local}, \mathcal{D}([0,1))} (\hat{\sigma}, \hat{\omega}) \, ,
	\end{align*}
which, by taking $\epsilon \to 0$, will prove Lemma \ref{lem:quad_Muck_tensor}.

	Given an interval $J \in \mathcal{D} \left ( \left [0,1 \right ) \right )$, define the square $J_k$ by  
	\[
J_k \equiv J \times \left [ k \ell \left (J \right ) , \left ( k+1 \right ) \ell \left (J \right ) \right ) \, , 
	\]
	and note that $\mathcal{D} \left ([0,1)^2 \right ) = \bigcup\limits_{J \in \mathcal{D} \left ( [0,1) \right )} \left \{J_k \right \}_{k=0}^{\frac{1}{\ell \left ( J \right )} -1}$. Define $b_{J_k} \equiv a_J$, and consider
	\begin{multline*}
		\left \| \left \{ \sum\limits_{J \in \mathcal{D}([0,1))} \sum\limits_{k=0}^{ \frac{1}{\ell \left ( J \right )} -1 } b_{J_k} ^2 \left ( E_{J_k } \sigma \right )^2 \mathbf{1}_{J_k} \right \} ^{\frac{1}{2}} \right \|_{L^p (\omega)} = \left \| \left \{ \sum\limits_{J \in \mathcal{D}([0,1))} a_J ^2 \left ( E_{J} \hat{\sigma} \right )^2 \sum\limits_{k=0}^{ \frac{1}{\ell \left ( J \right )} -1 }  \mathbf{1}_{J_k} \right \} ^{\frac{1}{2}} \right \|_{L^p (\omega)} \\
		= \left \| \left \{ \sum\limits_{J \in \mathcal{D}([0,1))} a_J ^2 \left ( E_{J} \hat{\sigma} \right )^2 \mathbf{1}_{J} \left (x_1 \right ) \mathbf{1}_{[0,1)} (x_2) \right \} ^{\frac{1}{2}} \right \|_{L^p (\omega)} = \left \| \left \{ \sum\limits_{J \in \mathcal{D}([0,1))} a_J ^2 \left ( E_{J} \hat{\sigma} \right )^2 \mathbf{1}_{J}  \right \} ^{\frac{1}{2}} \right \|_{L^p (\hat{\omega})} \, .
	\end{multline*}
Similarly, we get
	\[
		\left \| \left \{ \sum\limits_{J \in \mathcal{D}([0,1))} \sum\limits_{k=0}^{ \frac{1}{\ell \left ( J \right )} -1 } b_{J_k} ^2 \mathbf{1}_{J_k} \right \} ^{\frac{1}{2}} \right \|_{L^p (\sigma)} = \left \| \left \{ \sum\limits_{J \in \mathcal{D}([0,1))} a_J ^2 \mathbf{1}_{J}  \right \} ^{\frac{1}{2}} \right \|_{L^p (\hat{\sigma} ))} \, , 
	\]
	and so by \eqref{eq:almost_extreme_Ap_quad_dim_1} we have
	\[
		A_{p} ^{\ell^2, \operatorname{local}, \mathcal{D} ([0,1)^2)} (\sigma, \omega ) \geq \frac{ \left \| \left \{ \sum\limits_{J \in \mathcal{D}([0,1))} \sum\limits_{k=0}^{ \frac{1}{\ell \left ( J \right )} -1 } b_{J_k} ^2 \left ( E_{J_k } \sigma \right )^2 \mathbf{1}_{J_k} \right \} ^{\frac{1}{2}} \right \|_{L^p (\omega)} }{ \left \| \left \{ \sum\limits_{J \in \mathcal{D}([0,1))} \sum\limits_{k=0}^{ \frac{1}{\ell \left ( J \right )} -1 } b_{J_k} ^2 \mathbf{1}_{J_k} \right \} ^{\frac{1}{2}} \right \|_{L^p (\sigma)}} > \left ( 1- \epsilon \right ) A_{p} ^{\ell^2, \operatorname{local}, \mathcal{D}([0,1))} (\hat{\sigma}, \hat{\omega}) \, . \qedhere
	\]
\end{proof}

\section{Proof of Theorem \ref{thm:counter_example}: Scalar Triple Testing for \texorpdfstring{$R_2$}{}}\label{section:testing}

We are ready to prove Theorem \ref{thm:counter_example}. Let $\Gamma > 1$, $\tau \in \left ( 0, \frac{1}{2} \right )$. By duality, it suffices to prove Theorem \ref{thm:counter_example} when $p > 2$, which we assume here onwards. Let $\epsilon  \in (0, \frac{1}{2})$ be fixed shortly. Begin with $\tilde{\sigma}$ and $\tilde{\omega}$ from Proposition \ref{prop:weights_small_dyadic_doubling}, which are $\epsilon$-doubling on $\mathcal{D}([0,1))$, and say both are $\mathcal{D}$ dyadic step functions with step size $2^{-m}$. Then let $\{k_j\}_{j \geq 0}$ be a sequence to be chosen later with $k_0 = 0$. Apply Nazarov's remodeling of Section \ref{section:transplantion} to obtain periodic weights $\hat{\sigma}_t$ and $\hat{\omega}_t$ on $\mathbb{R}$ for all  $t\geq 0$, which by Lemma \ref{lem:doubling_remodeling}, are $\tau$-doubling on $\mathbb{R}$ after fixing $\epsilon =\epsilon(\tau)$. 

Next, we extend $\hat{\sigma}_t$ and $\hat{\omega}_t$ to $\tau$-'doubling measures $\sigma_t$ and $\omega_t$ on the plane $\mathbb{R}^2$ by setting
  \begin{equation}\label{eq:defn_sigma_omega_testing}
    \sigma_t (x_1, x_2) \equiv \hat{\sigma}_t (x_1) \, , \qquad \omega_t (x_1, x_2) \equiv \hat{\omega}_t (x_1) \, .
  \end{equation}
And with $m$ as above, set 
\begin{align} \label{eq:defn_sigma_omega_testing_2}
  \hat{\sigma} \equiv \hat{\sigma}_m \, , &\qquad \hat{\omega} \equiv \hat{\omega}_m \, , \\
  \sigma \equiv \sigma_m \, , &\qquad \omega \equiv \omega_m \, . \label{eq:defn_sigma_omega_testing_3} 
\end{align}
 For convenience, define $A\equiv\left\vert \left[  0,1\right )
\right\vert _{\hat{\sigma}}$ and $B=\left\vert \left[  0,1\right )
\right\vert _{\hat{\omega}}$. Note that 
\[
A^{p-1} B\leq A_{p}^{\mathcal{D}([0,1))} (\widetilde{\sigma},\widetilde{\omega}) ^p \lesssim 1 \, .
\]
Recalling Definitions \eqref{eq:s_k_transition}, \eqref{eq:osc_1d_def} and  \eqref{eq:remodeled_martingale_diff_transition}, we also define
\[
  \operatorname{Osc}_{k_{t+1}} ^{R, \operatorname{horizontal}}(x_1, x_2) \equiv \operatorname{Osc}_{k_{t+1}} ^{R}(x_1) \, ,
\]
yielding the formulas
\begin{align}
  \label{eq:level_0}\sigma_0 &= A \, ,\\
  \sigma_{t+1} -\sigma_t &\label{eq:disc_sigma}=\sum\limits_{ R \in \mathcal{D}_t ([0,1)) }  \langle \tilde{\sigma}, h_{R} \rangle \frac{1}{\sqrt{ \left | R \right |}} \operatorname{Osc}_{k_{t+1}} ^{R, \operatorname{horizontal}} \, , 
\end{align}
and 
\begin{align}
  \omega_0 &= B \, ,\\
  \omega_{t+1} -\omega_t &=\sum\limits_{ R \in \mathcal{D}_t ([0,1)) }  \langle \tilde{\omega}, h_{R} \rangle \frac{1}{\sqrt{ \left | R \right |}} \operatorname{Osc}_{k_{t+1}} ^{R, \operatorname{horizontal}} \label{eq:disc_omega}\, . 
\end{align}
Because $\tilde{\sigma}$ and $\tilde{\omega}$ were dyadic step functions with step size $2^{-m}$, then for all $\ell \geq 0$ we have  
\[
  \sigma = \sigma_{m+ \ell} = \sigma_m \, , \qquad \omega = \omega_{m+ \ell} = \omega_m
\]

By Lemma \ref{lem:remodeling_Ap}, we have 
\begin{equation}\label{eq:Ap_final}
A_p (\sigma, \omega) \lesssim 1 \, .
\end{equation}
By Remark \ref{rmk:no_tails} and by taking $\tau$ sufficiently small, the same holds for the two-tailed characteristics, i.e., 
\[
\mathcal{A}_p (\sigma, \omega) \lesssim 1 \, , \qquad \mathcal{A}_{p'} (\omega, \sigma) \lesssim 1 \, .
\] By Lemma \ref{prop:remodeling_quad_Muck}, for any choice of $\{k_j\}$ satisfying $k_j \geq F_j (\Gamma, p)$ for some positive function $F_j$,  we have 
\begin{equation}\label{eq:Ap_quad_big2}
	A_{p} ^{\ell^2, \operatorname{local}, \mathcal{D} ([0,1))} (\hat{\sigma}, \hat{\omega}) > \Gamma \, .
\end{equation}
By Proposition \ref{lem:norm_rect_quad_Muck} and Lemma \ref{lem:quad_Muck_tensor}, and Stein-ellipticity of the second Riesz transform $R_2$, we also have 
\[
\mathfrak{N}_{R_2, p} (\sigma, \omega) \gtrsim \Gamma \, .
\]
Thus, Theorem \ref{thm:counter_example} will follow once we show boundedness of the full-testing characteristics. By Remark \ref{rmk:no_tails}, it suffices to show this for the triple testing characteristics, i.e.,
\begin{equation} \label{eq:triple_testing_bounded}
	\mathfrak{T}_{R_2, p} ^{\operatorname{triple}} (\sigma, \omega) \lesssim 1 \, , \qquad \mathfrak{T}_{R_2, p'} ^{\operatorname{triple}} ( \omega, \sigma) \lesssim 1 \, .
\end{equation}
The remainder of this section is dedicated to this goal, which is finally achieved in Section \ref{subsection:triple_testing} below. Our strategy follows    \cite[Sections 4-5]{AlLuSaUr}, where we also remodeled weights with oscillation only along the $e_1$ axis and showed boundedness of the \emph{dyadic} local scalar testing constants for $R_2$.

\subsection{Riesz transforms of rectangles and horizontally oscillating functions}\label{subsection:weak_convergence_properties}

A key lemma of \cite{AlLuSaUr} was the following, in which given a square $P = P_1 \times P_2$ in $\mathbb{R}^2$,  
we define
\[
\widehat{s}_k ^{P , \operatorname{horizonal}} (x_1, x_2) \equiv \widehat{s}_k ^{P_1} (x_1) \mathbf{1}_{P_2} (x_2)  \, .
\]
\begin{lemma}[{\cite[Lemma 30 part (3)]{AlLuSaUr}}]
\label{reduction}Suppose $q\in(1,\infty)$. Given a square $P\subset\mathbb{R}^{2}$, we have 
\[
 \lim\limits_{k \to \infty} \int \left | R_{2} \widehat{s}_k ^{P, \operatorname{horizontal}} (x) \right |^q dx  \to 0 \, .
\]
\end{lemma}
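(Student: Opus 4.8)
The plan is to split $\mathbb{R}^2$ into the fixed bounded box $3P$ and its complement and to control $\int |R_2 s_k^{P,\operatorname{horizontal}}|^q$ on the two pieces by genuinely different mechanisms. Throughout, recall that for a cube $P = P_1 \times P_2$ one has $s_k^{P,\operatorname{horizontal}}(x_1,x_2) = s_k^{P_1}(x_1)\mathbf{1}_{P_2}(x_2)$, where $s_k^{P_1}$ is supported on $P_1$, bounded by $1$, and has vanishing mean over every $I \in \mathcal{D}_{k-1}(P_1)$; equivalently its Haar support is exactly the single scale $\mathcal{D}_{k-1}(P_1)$, of sidelength $\delta_k := 2^{1-k}\ell(P_1) \to 0$ as $k \to \infty$.

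First I would dispose of the region away from $P$. For $x \notin 3P$ write $R_2 s_k^{P,\operatorname{horizontal}}(x) = c_2 \int_{P_2}\!\big(\int_{P_1} K_2(x_1-y_1,x_2-y_2)\, s_k^{P_1}(y_1)\, dy_1 \big)\, dy_2$; the kernel $K_2(x-y) = (x_2-y_2)|x-y|^{-3}$ is smooth in $y_1$ on $P$, with $|\partial_{y_1} K_2(x-y)| \lesssim (\ell(P)+\operatorname{dist}(x,P))^{-3}$. Subtracting, on each $I \in \mathcal{D}_{k-1}(P_1)$, the value of the kernel at the centre of $I$ and invoking the vanishing mean of $s_k^{P_1}$ gives the pointwise bound $|R_2 s_k^{P,\operatorname{horizontal}}(x)| \lesssim \delta_k\, \ell(P)^2\, (\ell(P)+\operatorname{dist}(x,P))^{-3}$. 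Taking $q$-th powers and integrating over $(3P)^c$ (the radial integral converges since $3q > 2$) yields $\int_{(3P)^c} |R_2 s_k^{P,\operatorname{horizontal}}|^q \lesssim_q \delta_k^{\,q}\, \ell(P)^{2-q} \to 0$.

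Near $P$ there is no pointwise decay, so I would first establish $L^2$-smallness on all of $\mathbb{R}^2$ via Plancherel. Since $\widehat{s_k^{P,\operatorname{horizontal}}}(\xi) = \widehat{s_k^{P_1}}(\xi_1)\,\widehat{\mathbf{1}_{P_2}}(\xi_2)$ and $R_2$ has multiplier $-i\xi_2/|\xi|$,
\[
\|R_2 s_k^{P,\operatorname{horizontal}}\|_{L^2(\mathbb{R}^2)}^2 = \int_{\mathbb{R}} |\widehat{s_k^{P_1}}(\xi_1)|^2 \Big( \int_{\mathbb{R}} \tfrac{\xi_2^2}{\xi_1^2+\xi_2^2}\, |\widehat{\mathbf{1}_{P_2}}(\xi_2)|^2\, d\xi_2 \Big) d\xi_1 .
\]
The inner integral is $\lesssim \min(\ell(P_2), |\xi_1|^{-1})$ (using $|\widehat{\mathbf{1}_{P_2}}(\xi_2)| \lesssim \min(\ell(P_2),|\xi_2|^{-1})$), so the multiplier's contribution decays in $|\xi_1|$, while $\widehat{s_k^{P_1}}$ is concentrated at high frequency: the vanishing means give $|\widehat{s_k^{P_1}}(\xi_1)| \lesssim |\xi_1|\, \delta_k\, \ell(P_1)$, hence $\int_{|\xi_1| < R} |\widehat{s_k^{P_1}}|^2 \lesssim R^3 \delta_k^{\,2}\ell(P_1)^2$, whereas $\int_{|\xi_1| \ge R} |\widehat{s_k^{P_1}}|^2 \le \|s_k^{P_1}\|_{L^2}^2 = \ell(P_1)$. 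Splitting the $\xi_1$-integral at $R = \delta_k^{-1/4}$ gives $\|R_2 s_k^{P,\operatorname{horizontal}}\|_{L^2(\mathbb{R}^2)}^2 \lesssim \ell(P_2)\ell(P_1)^2 \delta_k^{\,5/4} + \ell(P_1)\delta_k^{\,1/4} \to 0$. On the fixed box $3P$ I would then interpolate this against the $L^r$-boundedness of $R_2$, which gives $\|R_2 s_k^{P,\operatorname{horizontal}}\|_{L^r(\mathbb{R}^2)} \le C_r |P|^{1/r}$ for every $r \in (1,\infty)$ since $|s_k^{P,\operatorname{horizontal}}| \le \mathbf{1}_P$: for $q \le 2$, H\"older on $3P$ gives $\|R_2 s_k^{P,\operatorname{horizontal}}\|_{L^q(3P)} \le |3P|^{1/q-1/2} \|R_2 s_k^{P,\operatorname{horizontal}}\|_{L^2} \to 0$; for $q > 2$, taking $r = 2q$ and $\theta = \tfrac{1}{q-1} \in (0,1)$ with $\tfrac1q = \tfrac\theta2 + \tfrac{1-\theta}{r}$ gives $\|R_2 s_k^{P,\operatorname{horizontal}}\|_{L^q(3P)} \le \|R_2 s_k^{P,\operatorname{horizontal}}\|_{L^2}^{\theta}\, \|R_2 s_k^{P,\operatorname{horizontal}}\|_{L^{2q}}^{1-\theta} \to 0$. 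Adding the $(3P)^c$ and $3P$ contributions finishes the argument.

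The main obstacle is the near-$P$ piece. Far from $P$ the cancellation of $s_k^{P_1}$ is straightforward, but on and near its support $R_2 s_k^{P,\operatorname{horizontal}}$ is merely a $\mathrm{BMO}$-type function, exhibiting logarithmic singularities along the horizontal edges of the thin dyadic boxes making up $s_k^{P_1} \otimes \mathbf{1}_{P_2}$, so one cannot hope for pointwise decay there and is forced into an $L^2$ estimate. Within that estimate the delicate point is that one must keep both frequency variables $\xi_1, \xi_2$ together — a slicing/Minkowski estimate in the $x_1$-variable diverges precisely because of those edge singularities — and then exploit the interplay between the decay of the $R_2$-multiplier $\xi_2/|\xi|$ in $\xi_1$ and the concentration of $\widehat{s_k^{P_1}}$ at frequencies $|\xi_1| \sim \delta_k^{-1} \to \infty$.
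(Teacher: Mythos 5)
Your proof is correct, but it takes a genuinely different route from the one the paper relies on. The paper does not reprove Lemma \ref{reduction} (it cites it from \cite{AlLuSaUr}); the argument it actually runs, in the proof of the generalization Lemma \ref{reduction_extra}, is real-variable: discard the far region $\mathbb{R}^2\setminus rP$ by the size estimate, write $R_2(g\otimes\mathbf{1})(x)=\int F(x,y_1)\,g(y_1)\,dy_1$ with $F(x,y_1)=c\int_{P_2}\frac{x_2-y_2}{|x-y|^3}\,dy_2$ computed in closed form, verify that $y_1\mapsto F(x,y_1)$ is $2$-piecewise monotone, bounded off the extended halo $H^{P;Q}_\delta$, and has halo contribution small in $L^q$, and then invoke the alternating series test (Lemmas \ref{Alt_series_test} and \ref{Alt_Series_strong_convergence_old}) so that the $\pm1$ oscillation of $s_k$ kills the integral pointwise off the halo. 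You instead use the mean-zero property of $s_k^{P_1}$ at scale $\delta_k$ together with kernel smoothness for the far field, and near $P$ a Plancherel computation with the explicit multiplier $\xi_2/|\xi|$ (exploiting the concentration of $\widehat{s_k^{P_1}}$ at frequencies $\gtrsim\delta_k^{-1}$) to get $L^2$-smallness, upgraded to $L^q$ by H\"older on $3P$ when $q\le 2$ and by interpolation against the uniform Calder\'on--Zygmund bound $\|R_2 s_k^{P,\operatorname{horizontal}}\|_{L^{2q}}\lesssim|P|^{1/(2q)}$ when $q>2$; all the individual estimates (the gradient bound yielding $\delta_k\,\ell(P)^2(\ell(P)+\operatorname{dist}(x,P))^{-3}$, the bound $\min(\ell(P_2),|\xi_1|^{-1})$ for the $\xi_2$-integral, the low-frequency bound $|\widehat{s_k^{P_1}}(\xi_1)|\lesssim|\xi_1|\delta_k\ell(P_1)$, and the split at $R=\delta_k^{-1/4}$) check out. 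What each approach buys: yours is shorter, avoids the halo analysis entirely, and gives an explicit rate in $\delta_k$; the paper's monotonicity-plus-alternating-series argument is the one that transfers directly to the situation actually needed later, namely $B$-block alternating functions on arbitrary (non-dyadic) squares with smallness measured only by the product $B\Delta$ (Lemma \ref{reduction_extra}), and it does not depend on the exact Fourier multiplier, so it adapts to general Riesz transforms in higher dimensions as in \cite{AlLuSaUr}. If you wanted your Fourier route to serve as a substitute there, you would need to re-derive the low-frequency bound for $g_{\Delta,B}$ in terms of $B\Delta$, which is plausible but not automatic from what you wrote.
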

To show testing across \emph{all} cubes, and not just dyadic cubes, we generalize Lemma \ref{reduction} to  Lemma \ref{reduction_extra} below.

\begin{lemma}\label{lem:approx_dependence} Let $1< q< \infty$ and consider the Riesz transform $R_2$ on the plane $\mathbb{R}^2$.
  \begin{enumerate}
    \item \label{lem_part:Riesz_indicator} If $R$ is an axis-parallel rectangle, then $R_2 \mathbf{1}_{R}$ has only logarithmic singularities at the boundary of $R$.

    \item  \label{lem_part:Riesz_sum_indicators} Suppose $f = \sum\limits_{R} c_R \mathbf{1}_R$, where the sum is among a finite number of rectangles $R$ in $[0,1)^2$ of the form 
    \[
    R = R^1 \times [-1,1)  \, .
    \]
    Then there exists a nonnegative function $F$ such that, for every $\epsilon > 0$ and every translate $\mathcal{D}'$ of $\alpha \mathcal{D} \equiv \{\alpha 2^{-k} (j+ [0,1)^2))\}_{j \in \mathbb{Z}^2, k \in \mathbb{Z}}$ in $\mathbb{R}^2$, if a nonnegative integer $k$ satisfies
	    \[
		    \alpha 2^{-k} \leq F(q, M, \sup\limits_{R} \ell (R^1), \sup\limits_{R} c_R, \epsilon) \, , 
	    \]
		  then there exists a $\mathcal{D}'$ dyadic step function $f_k$ with step size $\alpha 2^{-k}$ for which
\[
  \int \left | \left | R_2 f \right |^q  - f_k \right | dx < \epsilon \, . 
      \]
		    \end{enumerate}
\end{lemma}
\begin{proof}
 For Part (\ref{lem_part:Riesz_indicator}), write  
  \[R = [a,b) \times [c,d) \, .
  \]
  A standard computation yields \begin{equation}\label{eq:R2_indicator}
	R_2 \mathbf{1}_{[a,b) \times [c,d)} (x_1, x_2) = C \{ ( \arcsinh z_{NW} -  \arcsinh  z_{NE}) -( \arcsinh  z_{SW} -  \arcsinh  z_{SE}) \} \, ,
      \end{equation}
      where $z_{NW} = \frac{x_1 - a}{|x_2 - d|}$, $z_{NE} = \frac{x_1 - b}{|x_2 - d|}$,  $z_{SW} = \frac{x_1 - a}{|x_2 - c|}$, $z_{SE} = \frac{x_1 - b}{|x_2 - d|}$, and $C > 0$ is an absolute constant. The identity $\arcsinh z = \log (\sqrt{z^2 + 1} + z)$ shows that \eqref{eq:R2_indicator} has only logarithmic singularities on $\partial R$.

  As for Part (\ref{lem_part:Riesz_sum_indicators}), because $R_2 f$ decays like $\frac{1}{|x|^2}$, then let $K$ be a large finite union of cubes in $\mathcal{D}'$ such that 
  \[
    \int\limits_{K^c} |R_2 f|^p dx < \frac{\epsilon}{3} 
  \]
	Since there are finitely many rectangles $R$ appearing in the formula for $f$, choose $k_0$ large enough so that there exists a finite collection of cubes $\{Q\}$ in $\mathcal{D}_{k_0} '$ for which $\bigcup\limits_R \partial R$ is contained in the interior of $\bigcup Q$ and 
  \[
    \int\limits_{K \cap \left (\bigcup Q \right )} \left | R_2 f \right |^p < \frac{\epsilon}{3} \, .
  \]
  Then on $K \cap (\bigcup Q)^c$, approximate $\left | R_2 f \right |^p$ uniformly by a $\mathcal{D}'$-dyadic step function $f_k$ of step size $\alpha 2^{-k}$ to control the last integral by $\frac{\epsilon}{3}$. This can be done since $\left |\nabla R_2 f (x) \right | \leq C (q, M, \sup\limits_{R} c_R , \epsilon)$ on $(\bigcup Q)^c$. 
\end{proof}

\begin{definition}
  A function $g_{\Delta}: \mathbb{R} \to \mathbb{C}$ is a  square wave with step-size $\Delta$ if 
  \[
  g_{\Delta} = s \sum\limits_{j=1}^{M} (-1)^j \mathbf{1}_{I_j} \, , 
  \] where  $\{I_j\}_{j=1}^M$ is a collection  of consecutive intervals of size $\Delta$ and $s \in \{-1,1\}$ is a number. 
  
\end{definition}
If $R \in \mathcal{D}_t \left ( [0,1) \right )$ then $\operatorname{Osc}_{k_{t+1}} ^R \mathbf{1}_{[0,1)}$ is a sum of  $2^{k_1 + \ldots + k_t}$ square waves, with pairwise disjoint support and step-size $2^{- \left ( k_1 + \ldots + k_{t+1} \right )}$. We will also need to track the changes in monotonicity of a function. 

\begin{definition}
  We say that a function $f:[a,b) \to \mathbb{C}$ is $M$\emph{-piecewise
monotone} if there is a partition 
\[
  \left\{  a=t_{0}<t_{1}<...<t_{M}=b\right\}
\] such that $f$ is monotone and of one sign on each subinterval $\left(
t_{k},t_{k+1}\right)  $, $0\leq k<M$.
\end{definition}

\begin{lemma}[Alternating Series Test, Lemma 23 in \cite{AlLuSaUr}]
  \label{Alt_series_test} Let $g_{\Delta}$ be a square wave with step size $\Delta$ supported on an interval $I$. If $b$ is $M$-piecewise monotone and bounded on $I$, then%
\[
  \left\vert \int b(x)  g_{\Delta}dx\right\vert \lesssim M \Delta \left\Vert
b\right\Vert _{\infty} \, .
\]
\end{lemma}

\begin{proof}
First suppose $I = [0,1)$ and 
\[
g_{\Delta}(x) = \sum\limits_{j=1}^{\frac{1}{\Delta}} \left (-1 \right)^j \mathbf{1}_{\left [ (j-1) \Delta, j \Delta \right )} (x) \, .
\]
Say $b$ is monotone decreasing on $[0,1)$. Then by the alternating series test we have
\begin{align}
  \label{alt_series_estimate}\left\vert \int b(x)g_{\Delta}(x)dx\right\vert
  =\left\vert \sum_{j=1}^{\frac{1}{\Delta} }\left(  -1\right)  ^{j}\int_{(j-1) \Delta } ^{j \Delta }b(x)dx\right\vert \leq\int_{0}^{\Delta}\left\vert
  b(x)\right\vert dx\leq \Delta \left\Vert b\right\Vert _{\infty} \, .
\end{align}
In general, if $I$ is arbitrary and $b$ is $M$-monotone on $I$ with partition $ t_0 < t_1 < \ldots < t_M $, we can apply this argument to
the subinterval $\left[  t_{m-1},t_{m}\right]  $ if the endpoints lie in
$\left\{  j\Delta \right\}  _{j=0}^{\frac{1}{\Delta}}$, the points of change in sign of
$g_{\Delta}$. For general $b$, if $ j_{m-1} \Delta$ (or $j_m \Delta$) denotes the leftmost (or rightmost) point
of the form $j \Delta$ in $[t_{m-1}, t_{m}]$, then the integrals over
each of the intervals $[t_{m-1}, j_{m-1} \Delta ]$, $[j_{m-1} \Delta, j_{m} \Delta]$, and $[j_{m} \Delta, z_{m}]$ all satisfy the same bound as (\ref{alt_series_estimate}).
\end{proof}

For simplicity, we only present the following lemmas in the plane. See \cite[Section 4]{AlLuSaUr} for how these results extend to higher dimensions. 
In what follows, given $f: \mathbb{R} \to \mathbb{C}$, we define the $\mathbb{R}^2 \to \mathbb{C}$ function
\[
f \otimes \mathbf{1}_{[-1,1)} (x_1,x_2) \equiv f(x_1) \mathbf{1}_{[-1,1)} (x_2) \, . 
\]
\begin{lemma}
  \label{reduction_extra}Let  $q\in(1,\infty)$.  If 
  \[
  g_{\Delta} = \sum\limits_{j=1}^B g_{\Delta, j}
  \]
  where $\{g_{\Delta, j}\}_{j=1}^B$ is a collection of square waves supported in $[-1,1)$ with step size $\Delta$ and with pairwise disjoint support, then 
\begin{equation}\label{eq:alt_series_est_applied}
  \left \|R_2 ( g_{\Delta} \otimes \mathbf{1}_{[-1,1)}) \right \|_{L^q (dx)} \lesssim (B \Delta)^{\frac{q(2q-1)}{1 + 3q + 12q^2}} \, .
\end{equation}
The implicit constant only depends on $q$.
\end{lemma}

\begin{proof}
For convenience define
\[
P_r \equiv [-r,r)^2  \, .
\]
With 
\[
G \equiv R_2 (g_{\Delta} \otimes \mathbf{1}_{[-1,1)} )
\]
and $r \geq 10$,
we write 
\begin{equation}\label{eq:split_Riesz_osc}
\int\limits_{\mathbb{R}^2} |G(x)|^q \, dx = \int\limits_{\mathbb{R}^2 \setminus P_r} |G(x)|^q \, dx + \int\limits_{H_{\delta} ^r} |G(x)|^q \, dx +  \int\limits_{P_r \setminus H_{\delta} ^r} |G(x)|^q \, dx \, ,
\end{equation}
where for $\delta \in (0,1)$, we define 
\[
H_{\delta}^{r}\equiv\left\{  x\in P_r : | |x_{2}| - 1 |<\delta \right\}  \, .
\]
We will estimate all three terms on the right of \eqref{eq:split_Riesz_osc} in terms of $r, \delta, q, \Delta, B$, and then optimize in the choice of $r$ and $\delta$.

Using the Calder\'on-Zygmund size estimates and the fact that $|g_{\Delta} \otimes \mathbf{1}_{[-1,1)}| \leq \mathbf{1}_P$, and then integrating using polar coordinates, the first term on the right of \eqref{eq:split_Riesz_osc} equals
\begin{equation}\label{eq:tail_bd_G}
\int\limits_{\mathbb{R}^{2}\setminus P_r}\left\vert G(x)\right\vert ^{q}dx\lesssim \int\limits_{\mathbb{R}%
^{2}\setminus P_r}\left(  \int\limits_{P_1}\frac{1}{\left\vert x-y\right\vert ^{2}%
}dy\right)  ^{q}dx\lesssim \int\limits_{\mathbb{R}^{2}\setminus P_r}\left(
\frac{\left\vert P_1\right\vert }{\left\vert \operatorname*{dist}\left(
x,P_1\right)  \right\vert ^{2}}\right)  ^{q}dx \lesssim \int\limits_{r} ^{\infty} s^{-2q} \, ds \lesssim r^{1-2q } \, . 
\end{equation}

As for the last two terms on the right side of \eqref{eq:split_Riesz_osc}, we write
\begin{align*}
G(x) = R_{2}\left (g_{\Delta} \otimes \mathbf{1}_{[-1,1)} \right )\left(  x\right)   &  = c \int_{-1}%
^{1}\int_{-1}^{1}\frac{\left(  x_{2}-y_{2}\right)
g_{\Delta} \left(  y_{1}\right)  }{\left[  \left(
	x_{1}-y_{1}\right)  ^{2}+\left ( x_2-y_2 \right )
^{2}\right]  ^{\frac{3}{2}}}dy_{1}dy_2 =\int_{-1}^{1}F(x,y_{1})g_{\Delta}\left(  y_{1}\right)
dy_{1} \, ,
\end{align*}
where
\[
	F(x,y_{1}) \equiv c \int\limits_{-1} ^1 \frac{ x_{2}-y_{2}}{\left[  \left(
	x_{1}-y_{1}\right)  ^{2}+\left ( x_2 - y_2 \right )
^{2}\right]  ^{\frac{3}{2}}}dy_2 \, . 
\]

For the before-last term of \eqref{eq:split_Riesz_osc}, we write using a change of variable and the oddness of the kernel, 
	\begin{align} \label{eq:Fy_odd}
-F(x,y_{1})  &= c \int_{x_{2}+1}^{x_{2}-1}\frac
{t}{\left[  \left(  x_{1}-y_{1}\right)  ^{2}+t^{2}\right]  ^{\frac{n+1}{2}}}dt  = c\int_{|x_{2}+1|}^{|x_{2}-1|}\frac{t}{\left[  \left(  x_{1}%
-y_{1}\right)  ^{2}+t^{2}\right]  ^{\frac{3}{2}}}dt \, .
\end{align}
Let
\begin{align*}
A_{x}  &  \equiv\left\{  y_{1}\in\lbrack-1,1]%
:\left\vert x_{1}-y_{1}\right\vert
>\left\vert 1-x_{2}\right\vert \right\}  \, , \qquad B_{x}   \equiv\left\{  y_{1}\in\lbrack
-1,1]:\left\vert x_{1}-y_{1} \right\vert <\left\vert 1-x_{2}\right\vert \right\}  \, .
\end{align*}
Assume without loss of generality that $|x_{2}-1|\leq|x_{2}+1|$. For $x\in
H_{\delta}^{r}$, we have%
\begin{align*}
&  \int_{-1}^{1}\left\vert F(x,y_{1})\right\vert dy_{1}\lesssim\int_{-1}%
^{1}\left\{  \int_{\min\{|x_{2}+1|,|x_{2}-1|\}}^{\infty
}\frac{t}{\left[  \left(  x_{1}-y_{1}\right)  ^{2}+t^{2}\right]  ^{\frac{3}{2}}%
}dt\right\} dy_{1}\\
&  =\frac{1}{2}\int_{-1}^{1}\frac{1}{\left[  \left(
x_{1}-y_{1}\right)  ^{2}+\left(  1-x_{2}\right)  ^{2}\right]  ^{\frac{1}{2}}%
}dy_{1} \leq \int \left\{  \mathbf{1}_{A_{x}}+\mathbf{1}_{B_{x}}\right\}  \frac{1}{\left[  \left(
x_{1}-y_{1}\right)  ^{2}+\left(  1-x_{2}\right)  ^{2}\right]  ^{\frac{1}{2}}%
}dy_{1}\\
	&  \leq\int_{A_{x}}\frac{1}{\left|  x_{1}-y_{1}\right|} d y_{1}+\int_{B_{x}}\frac{1}{\left\vert
1-x_{2}\right\vert} d y_{1}.
\end{align*}
By a crude estimate the second integral is bounded by
\[
\int_{B_{x}}\frac{1}{\left\vert 1-x_{2}\right\vert }d y_{1}
\leq |B_{x}|\frac{1}{\left\vert 1-x_{2}\right\vert}\lesssim 1 .
\]
Meanwhile the first integral clearly is bounded by
\[
c\ln\frac{c}{\left\vert 1-x_{2}\right\vert } \, . 
\]
Similar estimates hold when $\left\vert x_{2}+1\right\vert <\left\vert
x_{2}-1\right\vert $ and $x\in H_{\delta}^{r}$. Thus, if $x\in H_{\delta}^{r}$, then  
\[
|G(x)|^q \lesssim  1 + \left ( \ln \frac{1}{|1-x_2|} \right )^q+ \left ( \ln \frac{1}{|1+x_2|} \right )^q \lesssim 1 + |1-x_2|^{-\frac 1 2 } + |1+x_2|^{-\frac 1 2 } \, .
\]
Integrating over $H_{\delta} ^{r}$ yields
\begin{equation}\label{eq:halo_bd_G}
\int\limits_{H_{\delta} ^{r}} |G(x)|^q \lesssim  \delta ^{\frac 1 2} r \, .
\end{equation}

For the last term of \eqref{eq:split_Riesz_osc}, we claim that for all for all $x \in Q \setminus H_{\delta} ^{r}$, we have
\begin{equation}\label{eq:bound_F_interior}
|F(x,y_1)| \lesssim r \delta^{-3} \, .
\end{equation}
and the function 
\[
y \mapsto F(x,y_1)
\]
is $2$-piecewise monotone. Given these claims, 
then the alternating series test Lemma \ref{Alt_series_test} and the triangle inequality yield that for $x \in P_r \setminus H_{\delta} ^{r}$, 
\[
|G(x)| \lesssim r \delta^{-3} B \Delta \, , 
\]
and so raising to the $q$-th power and integrating across $ P_r \setminus H_{\delta} ^{r}$ yields
\begin{equation}\label{eq:Q_bd_G}
\int\limits_{Q \setminus H_{\delta} ^{r}} |G(x)|^q \, dx \lesssim  (r \delta^{-3} B \Delta)^q |Q| \lesssim r^{q + 2} \delta^{-3q} (B \Delta)^q \, .
\end{equation}
To obtain \eqref{eq:bound_F_interior}, we use \eqref{eq:Fy_odd} to estimate for $x \in P_r \setminus H_{\delta} ^{r}$, 
\begin{align*}
\left\vert F\left(  x,y_{1}\right)  \right\vert  \lesssim \int_{\min\{ |x_{2} + 1|, |x_{2} - 1| \}}^{\max\{
|x_{2} + 1|, |x_{2} - 1| \}}\frac{t}{\left[  \left(  x_{1}-y_{1}\right)
^{2}+t^{2}\right]  ^{\frac{3}{2}}}dt  \lesssim   \int_{\min\{ |x_{2} + 1|, |x_{2} -
1| \}}^{\max\{ |x_{2} + 1|, |x_{2} - 1| \}}\frac{t}{\delta^{3}}dt \lesssim  r\frac{1}{\delta^{3}} \, ,
\end{align*}
since if $x\in P_r\setminus H_{\delta}^{r}$ then by separation from the boundary we have 
\[
  t \geq \min\{ |x_{2} + 1|, |x_{2} -1| \} >\delta \, .
\]
As for the 2-piecewise monotonicity, by \eqref{eq:Fy_odd} we have
\begin{align*}
-F(x,y_1)=c \frac{1}{\left[  \left(  x_{1}-y_{1}\right)  ^{2}+t^{2}\right]  ^{\frac{1}{2}}%
}\Biggm|_{\left\vert x_{2}+1\right\vert }^{\left\vert x_{2}-1\right\vert } =c \frac{1}{\left[  \left(  x_{1}-y_{1}\right)  ^{2}+\left(  x_{2}-1\right)
^{2}\right]
^{\frac{1}{2}}}-c \frac{1}{\left[  \left(  x_{1}-y_{1}\right)  ^{2}+\left(  x_{2}+1\right)
^{2}\right]
^{\frac{1}{2}}} \, .
\end{align*}
Differentiating the above with respect to $y_1$ yields 
\begin{align*}
 -\frac{\partial}{ \partial y_1} F\left(  x,y_{1}\right)  = &c (x_1 - y_1)\left \{  \frac{1}{\left[  \left(  x_{1}-y_{1}\right)  ^{2}+\left(  x_{2}-1\right)
^{2}\right]
^{\frac{3}{2}}}-\frac{1}{\left[  \left(  x_{1}-y_{1}\right)  ^{2}+\left(  x_{2}+1\right)
^{2}\right]
^{\frac{3}{2}}} \right \} \\
= & \frac{ c (x_1 - y_1)}{\left[  \left(  x_{1}-y_{1}\right)  ^{2}+\left(  x_{2}-1\right)
^{2}\right]
^{\frac{3}{2}} \left[  \left(  x_{1}-y_{1}\right)  ^{2}+\left(  x_{2}+1\right)
^{2}\right]
^{\frac{3}{2}} } \\
&\times \left \{ \left[  \left(  x_{1}-y_{1}\right)  ^{2}+\left(  x_{2}+1\right)
^{2}\right]
^{\frac{3}{2}}-\left[  \left(  x_{1}-y_{1}\right)  ^{2}+\left(  x_{2}-1\right)
^{2}\right]
^{\frac{3}{2}} \right \} \, , 
\end{align*}
which vanishes when 
\begin{align*}
(x_1 - y_1) \left \{ \left[  \left(  x_{1}-y_{1}\right)  ^{2}+\left(  x_{2}+1\right)
^{2}\right]
^{\frac{3}{2}}-\left[  \left(  x_{1}-y_{1}\right)  ^{2}+\left(  x_{2}-1\right)
^{2}\right]
^{\frac{3}{2}} \right \}  
\end{align*}
vanishes. Multiplying and dividing by conjugates, we see this happens precisely when the following vanishes,
\begin{align*}
 (x_1 - y_1)\left \{ \left[  \left(  x_{1}-y_{1}\right)  ^{2}+\left(  x_{2}+1\right)
^{2}\right]
^3-\left[  \left(  x_{1}-y_{1}\right)  ^{2}+\left(  x_{2}-1\right)
^{2}\right]
^3 \right \} \, . 
\end{align*}
 By the identity $a^3 - b^3 = (a-b)(a^2 + ab + b^2)$, the last expression above vanishes if and only if $y_1 = x_1$. Thus $\frac{\partial}{ \partial y_1} F \left(  x,y_{1}\right)$, as a function of $y_1$, vanishes at most once, and so $y_1 \mapsto F(x, y_1)$ is $2$-piecewise monotone. 

Combining \eqref{eq:split_Riesz_osc} with \eqref{eq:tail_bd_G}, \eqref{eq:halo_bd_G} and \eqref{eq:Q_bd_G} yields
\[
\int\limits_{\mathbb{R}^2} |G(x)|^q \, dx \lesssim r^{1-2q}+\delta^{\frac 1 2} r + r^{q+2} \delta^{-3q} (B \Delta)^q \, . 
\]
Setting
\[
\delta \equiv r^{-4q} \, , \qquad r \equiv (B \Delta)^{ -\frac{q}{1+3 q +12 q^2}} 
\]
yields \eqref{eq:alt_series_est_applied}.
\end{proof}

\subsection{Choosing \texorpdfstring{$\{k_j\}$}{the sequence of k's} inductively to make all oscillatory terms small}\label{subsection:choose_k_inductively}
Recall that $\sigma$ and $\omega$, as defined in \eqref{eq:defn_sigma_omega_testing_3}, depend on the fixed parameters $\Gamma$, $p$ and the yet to be chosen sequence $\{k_j\}_{j \geq 0}$, the latter of which is only mildly constrained by Proposition \ref{prop:remodeling_quad_Muck}. We now explain how to choose $\{k_j\}$ inductively so that all oscillatory terms are small. Given $\epsilon > 0$, an integer $t \geq 0$ and a sequence $\{k_j\}_{j \geq 0}$, consider the following statement $\mathbf{U_t (\{k_j\}_{j \geq 0}, \epsilon, \Gamma, p)}$.
	\begin{statement*} If $Q \subset \mathbb{R}^2$ is a square and $t$ is the smallest nonnegative integer satisfying 
    \begin{equation}\label{eq:size_cond_Q}
    \ell(Q) \geq 2^{-(k_0 + k_1 \ldots +k_t)} \, , 
    \end{equation}
    then for all integers $\ell \geq t$ and intervals $R \in \mathcal{D}_{\ell} ([0,1))$, we have \begin{align}
    \label{eq:strong_conv}\frac{1}{\left | Q \right |}\int\limits \left | R_2 \mathbf{1}_{Q} \operatorname{Osc}_{k_{\ell+1}} ^{R, \operatorname{horizontal}} (x) \right |^p dx &< \epsilon \, ,\\
\label{eq:basic_weak_conv}\frac{1}{\left | Q \right |}\int\limits_{3Q} \operatorname{Osc}_{k_{\ell+1}} ^{R, \operatorname{horizontal}} (x) dx &<  \epsilon \, , \\
    \label{eq:weak_conv}\frac{1}{\left | Q \right |}\int\limits_{3Q} \left | R_2 \sigma_t \mathbf{1}_{Q} (x)  \right |^p \operatorname{Osc}_{k_{\ell+1}} ^{R, \operatorname{horizontal}} (x) dx &< \epsilon \text{ and }\frac{1}{\left | Q \right |}\int\limits_{3Q} \left | R_2 \omega_t \mathbf{1}_{Q} (x)  \right |^{p'} \operatorname{Osc}_{k_{\ell+1}} ^{R, \operatorname{horizontal}} (x) dx < \epsilon \, . 
  \end{align}  
	\end{statement*}

\begin{lemma}\label{lem:inductive_convergence} Let $\Gamma > 1$, and $p >2$. There exist nonnegative functions $\{F_j\}_{j \geq 0}$ such that for all $\epsilon > 0$ and all $t \geq 0$, if any sequence $\{k_j\}_{j \geq 0}$ satisfies $k_0 = 0$ and 
	\begin{equation}\label{eq:constraint_ks}
		k_{j} \geq F_t (k_0, \ldots, k_t, \epsilon, \Gamma, p) \text{ for all } j \geq t+1 \, ,
	\end{equation}
	then $\mathbf{U_t (\{k_j\}_{j \geq 0}, \epsilon, \Gamma, p)}$ holds. 
\end{lemma}

\begin{proof} 
Fix $\Gamma > 1$, $p > 2$, let $\epsilon >0$ and fix $t \geq 0$. 
Our goal is to define $\{F_j\}$ for $j \geq t+1$ so that if \eqref{eq:constraint_ks} holds, then  $\mathbf{U_t (\{k_j\}_{j \geq 0}, \epsilon, \Gamma, p)}$. Set
\[
k_0 \equiv 0 \, .
\]
 Because \eqref{eq:constraint_ks} is of the form of a constraint on $k_j$ for $j \geq t+1$, then we may assume we are given 
\[
k_0, k_1, \ldots, k_t \, .
\]
Now assume the assumptions of $\mathbf{U_t (\{k_j\}_{j \geq 0}, \epsilon, \Gamma, p)}$, i.e., 
let
\[
Q = Q_1 \times Q_2 \subset \mathbb{R}^2
\]
be a square satisfying \eqref{eq:size_cond_Q}. Let $\ell \geq t$ and let $R \in \mathcal{D}_{\ell} ([0,1))$. We must now show we can find $\{F_j\}_{j \geq t+1}$ such that \eqref{eq:strong_conv}, \eqref{eq:basic_weak_conv} and  \eqref{eq:weak_conv} all hold. 

	\textbf{Proof of \eqref{eq:strong_conv}.} Recall that for $R \in \mathcal{D}_{\ell} \left ( [0,1) \right )$ we have
\[
  \operatorname{Osc}_{k_{\ell+1}} ^{R, \operatorname{horizontal}} (x_1, x_2) = \operatorname{Osc}_{k_{\ell+1}} ^{R} (x_1) \, .
\]
	Let 
    \[
    P_1 \equiv \bigcup\limits_{d=1}^M I_d
    \]be the largest union of consecutive dyadic intervals of sidelength $ 2^{-(k_0 + k_1 + \ldots + k_{\ell+1})+1}$
    contained in $Q$ such that $\operatorname{Osc}_{k_{\ell+1}} ^{R} $ is not identically $0$ on $I_1$ or $I_M$; if no such collection $\{I_d\}_{d=1}^M$ exists, then set $P_1 \equiv \emptyset$. Then
\begin{align*}
  &\frac{1}{\left | Q \right |} \int  \left | R_2 \mathbf{1}_{Q} \operatorname{Osc}_{k_{\ell+1}} ^{R, \operatorname{horizontal}} (x) \right |^p dx \\
  \lesssim &\frac{1}{\left | Q \right |} \int  \left | R_2 \mathbf{1}_{P_1 \times Q_2} \operatorname{Osc}_{k_{\ell+1}} ^{R, \operatorname{horizontal}} (x) \right |^p dx + \frac{1}{\left | Q \right |} \int  \left | R_2 \mathbf{1}_{(Q_1 \setminus P_1) \times Q_2} \operatorname{Osc}_{k_{\ell+1}} ^{R, \operatorname{horizontal}} (x) \right |^p dx \equiv I  + II\, . 
\end{align*}
	Since $R_2$ is bounded on  $L^p (dx)$ and  $\left | \operatorname{Osc}_{k_{\ell+1}} ^{R, \operatorname{horizontal}}  \right | \leq 1$, we then have  
\[
  II \lesssim \frac{1}{\left | Q \right |} \int  \left | \operatorname{Osc}_{k_{\ell+1}} ^{R, \operatorname{horizontal}} \mathbf{1}_{(Q_1 \setminus P_1) \times Q_2} \right |^p dx = \frac{\left | \left ( Q_1 \setminus P_1 \right ) \cap \operatorname{supp} \left (\operatorname{Osc}_{k_{\ell+1}} ^{R} \right ) \right |}{|Q_1|} \lesssim \frac{2^{-(k_0 + \ldots + k_{\ell+1})}}{2^{-(k_0 + \ldots +k_{t})}} \leq 2^{-k_{t+1}} \, .  
\]

As for $I$, let 
\[
\psi(x) = (\psi_1 (x_1), \psi_2 (x_2))
\]
be the unique composition of dilations and translations mapping $[-1,1)^2$ to $Q$. A change of variables yields 
\begin{align*}
 I =  \int  \left | R_2 \left \{ \left ( \mathbf{1}_{P_1 \times Q_2} \circ \psi \right ) \left ( \operatorname{Osc}_{k_{\ell+1}} ^{R, \operatorname{horizontal}} \circ \psi \right )\right \} (x) \right |^p dx &= \int  \left | R_2 \left \{ \mathbf{1}_{[a,b) \times [-1,1)}  \left ( \operatorname{Osc}_{k_{\ell+1}} ^{R, \operatorname{horizontal}} \circ \psi \right )\right \} (x) \right |^p dx \,  \\
  &= \int  \left | R_2 \left \{ \mathbf{1}_{[a,b)}  \left (  \operatorname{Osc}_{k_{\ell+1}} ^{R} \circ \psi_1 \right ) \otimes \mathbf{1}_{[-1,1)} \right \} (x) \right |^p dx \, ,
\end{align*}
where $[a,b) \subseteq [-1,1)$.
Note that $\mathbf{1}_{[a,b)} \operatorname{Osc}_{k_{\ell+1}} ^{R} \circ \psi_1$ is a sum of  $B$ square waves with pairwise disjoint support, all supported on $[-1,1)$, and with step size $\Delta$, where 
\[
B \lesssim \ell (Q) 2^{k_0 + k_1 + \ldots + k_{\ell}} \, , \qquad \Delta \lesssim \frac{1}{\ell (Q)} 2^{-( k_1 + \ldots + k_{\ell+1})} \, .
\]
By Lemma \ref{reduction_extra},  
\[
I \lesssim (B\Delta)^{ \frac{p(2p-1)}{1+3p+12p^2}} \lesssim 2^{- \frac{p(2p-1)}{1+3p+12p^2} k_{\ell+1}  } \, .
\]
Altogether, \eqref{eq:strong_conv} holds  
if 
\[
k_{\ell+1} \geq F_{t} \left (k_1, \ldots, k_t, \epsilon, \Gamma, p \right ) \, .
\]

\textbf{Proof of  \eqref{eq:basic_weak_conv}.}
By a change of variables with the map $\psi$ again, the left side of \eqref{eq:basic_weak_conv} equals
  \[
	  \int \mathbf{1}_{[-3,3)^2} \left ( x \right ) \operatorname{Osc}_{k_{\ell+1}} ^{R, \operatorname{horizontal}} \circ \psi (x) \, dx \, .
  \]
	Let $\mathcal{D}'$ be the preimage of the standard grid of dyadic squares $\mathcal{D}$ under $\psi$. Then there exists $ n(\epsilon)$ such that if $k'$ satisfies 
    \begin{equation}\label{eq:req_k'}
    \frac{1}{\ell (Q)} 2^{-k'} \leq 2^{-n (\epsilon)} \, ,
    \end{equation}
    then there exists a $\mathcal{D}'$-dyadic step function $f$ with step size $\frac{1}{\ell (Q)} 2^{-k'}$ for which 
	\begin{equation}\label{eq:approx_indicator}
		\int \left | \mathbf{1}_{[-3,3)^2}   -  f \right |  < \epsilon \, . 
	\end{equation} 
    In particular, by \eqref{eq:size_cond_Q}, then  \eqref{eq:req_k'} will be satisfied if we set 
	\[
		k' \equiv k_0 + \ldots + k_t + n (\epsilon) \, .
	\]
    But $\operatorname{Osc}_{k_{\ell+1}} ^{R, \operatorname{horizontal}} \circ \psi$ is a square wave adapted to the grid$\mathcal{D}'$ with step size 
    \[
    \frac{1}{\ell (Q)} 2^{-(k_0 + \ldots +k_{\ell+1})} \leq 2^{-(k_{t+1} + \ldots + k_{\ell+1})} \, .
    \]
    This function will have mean $0$ on every step of $f$ if  
    \begin{equation}\label{eq:req_kt_cancellation}
    k_{t+1} \geq k'+1 = k_0 + \ldots + k_t + n(\epsilon) \, .
    \end{equation}
    Thus for such an integer $k_{t+1}$, we have 
    \begin{equation}\label{eq:perfect_cancellation_f_osc}
    \int f \left ( \operatorname{Osc}_{k_{\ell+1}} ^{R, \operatorname{horizontal}} \circ \psi \right ) dx = 0 \, .
    \end{equation}
    Combining \eqref{eq:perfect_cancellation_f_osc} and \eqref{eq:approx_indicator} and using that $|\operatorname{Osc}_{k_{\ell+1}} ^{R, \operatorname{horizontal}} \circ \psi| \leq 1$ yields the left side of \eqref{eq:basic_weak_conv} equals 
\[
	\left | \int \mathbf{1}_{[-3,3)^2} \left ( x \right ) \operatorname{Osc}_{k_{\ell+1}} ^{R, \operatorname{horizontal}} \circ \psi (x) \, dx \right | \leq  \int\limits \left | \mathbf{1}_{[-3,3)^2} \left (x\right )  - f(x) \right | dx < \epsilon \, ,
  \]
so long as $k_{t+1}$ satisfies Requirement \eqref{eq:req_kt_cancellation}.

  \textbf{Proof of  \eqref{eq:weak_conv}.}
By a change of variables using $\psi$, the left side of the first part of \eqref{eq:weak_conv} equals
  \[
    \int\limits_{[-3,3)^2} \left | R_2 \left ( \sigma_t \circ \psi \, \mathbf{1}_{[-1,1)^2} \right ) (x)  \right |^p \operatorname{Osc}_{k_{\ell+1}} ^{R, \operatorname{horizontal}} \circ \psi (x) dx \, .
  \]
    Since $\sigma_t$ is a $\mathcal{D}$-dyadic step function with step size
    \[
   2^{-(k_0 + \ldots + k_t)} \, , 
    \]
    then $\sigma_t \circ \psi$ is a $\mathcal{D}'$-dyadic step function with step size
    \[
   \frac{1}{\ell (Q)}  2^{-(k_0 + \ldots + k_t)} \leq 1 
    \]
   by \eqref{eq:size_cond_Q}. Thus $\sigma_t \circ \psi \mathbf{1}_{[-1,1)}$ is a linear combination of indicators of $M$ rectangles, whose $e_1$-sidelengths and $M$ are both bounded by a  function of  $k_0, \ldots, k_t$. Thus by Lemma \ref{lem:approx_dependence}, there exists $ m = m(k_0, \ldots, k_t, \Gamma, \epsilon, p)$ such that, if an integer $k'$ satisfies 
	\begin{equation}\label{eq:constraint_k'_2}
		\frac{1}{\ell (Q)} 2^{-k'} \leq 2^{-m} \, ,
	\end{equation}
    then there exists a $\mathcal{D}'$-dyadic step function $f$ with step size $\frac{1}{\ell (Q)} 2^{-k'}$ for which 
	\begin{equation}\label{eq:step_approx_converence}
    \int \left | \mathbf{1}_{[-3,3)^2 }\left | R_2 \left ( \sigma_t \circ \psi \, \mathbf{1}_{[-1,1)^2} \right ) (x)  \right |^p - f (x) \right |  < \epsilon \, . 
	\end{equation}
	By the size condition \eqref{eq:size_cond_Q}, then \eqref{eq:constraint_k'_2} will hold if 
    \[
    k' = 2^{-(k_0 + \ldots  + k_t  + m)}  \, .
    \]
  Then for 
  \[
  k_{t+1} \geq k'+1 = 2^{-(k_0 + \ldots  + k_t  + m)} + 1 \, , 
  \]
  we get $\operatorname{Osc}_{k_{\ell+1}} ^{R, \operatorname{horizontal}} \circ \psi$ has mean $0$ on each step of $f$ and so 
  \begin{equation}\label{eq:perfect_cancellation_f_2}
  \int |f| \left ( \operatorname{Osc}_{k_{\ell+1}} ^{R, \operatorname{horizontal}} \circ \psi \right ) dx = 0 \, ,
  \end{equation}
  Combining \eqref{eq:step_approx_converence} and \eqref{eq:perfect_cancellation_f_2} with the bound $|\operatorname{Osc}_{k_{\ell+1}} ^{R, \operatorname{horizontal}} \circ \psi| \leq 1$ yields 
\[
  \left | \int\limits_{[-3,3)^2} \left | R_2 \mathbf{1}_{[0,1)^2} (x)  \right |^p \operatorname{Osc}_{k_{\ell+1}} ^{R, \operatorname{horizontal}} \circ \psi (x) dx \right | \leq  \int\limits \left | \mathbf{1}_{[-3,3)^2} \left | R_2 \mathbf{1}_{[0,1)^2} (x)  \right |^p  - f(x) \right | \, dx < \epsilon \, . \qedhere
  \]
\end{proof}

By inductively choosing $\{k_t\}$ to satisfy our constraints, we have the following corollary.
\begin{corollary}\label{remark:choosing_sequence_1_by_1}
Let $\epsilon >0$. There exists a sequence $\{k_j\}_{ j \geq 0}$ with $k_0 =0$ such that  $\mathbf{U_t \left ( \left \{ k_j\right \}_{j \geq 0}, \epsilon, \Gamma, p \right ) }$ holds for all $t \geq 0$ and
\begin{equation} \label{eq:Ap_quad_big3}
A_{p} ^{\ell^2, \operatorname{local}, \mathcal{D}([0,1))} (\hat{\sigma}, \hat{\omega}) > \Gamma \, .
\end{equation}
	\end{corollary}
\begin{proof}
Set $k_0 = 0$. By \eqref{eq:Ap_quad_big2}, we have
\eqref{eq:Ap_quad_big3} holds if for all $j \geq 1$, we have 
\begin{equation}\label{eq:k_req_0}
k_j \geq F_j ^0 (\Gamma, p, \epsilon) \, .
\end{equation}
And by Lemma \ref{lem:inductive_convergence}, we have $\mathbf{U_t \left ( \left \{ k_j\right \}_{j \geq 0}, \epsilon, \Gamma, p \right ) }$ holds for all $t \geq 0$ if for all $t \geq 0 $, the inequality
\begin{equation} \label{eq:k_req_1}
k_{\ell} \geq F_t ^1 (k_0, \ldots, k_t, \Gamma, p, \epsilon)
\end{equation}
holds for all $\ell \geq t+1$. Combining constraints \eqref{eq:k_req_0} and \eqref{eq:k_req_1}, and suppressing the dependence on $\epsilon$, $\Gamma$ and $p$, we have the requirement that for all $t \geq 0$, the inequality
\begin{equation} \label{eq:k_req_2}
k_{\ell} \geq F_t  (k_0, \ldots, k_t)
\end{equation}
holds for all $\ell \geq t+1$.
For each $t\geq0$, define
    \[
		k_{t+1} \equiv \sup \left \{ F_0 (k_0), F_1 (k_0, k_1) , \ldots ,  F_t (k_0, \ldots, k_t) \right \} \, .
	\]
    This will then satisfy \eqref{eq:k_req_2}. 
\end{proof}

\subsection{Triple testing holds}\label{subsection:triple_testing}
Recall that at the beginning of this section, we fixed $\Gamma >1$ and $p >2$, and given any sequence $\{k_j\}_{j \geq 0}$, we defined $\sigma$ and $\omega$ as in  \eqref{eq:defn_sigma_omega_testing}. We now complete the proof of Theorem \ref{thm:counter_example} by showing there exists a choice of sequence $\{k_j\}$ for which \eqref{eq:triple_testing_bounded} holds.
We will show that for every $\epsilon \in (0,1)$, there is a choice of sequence $\{ k_j \}_{j \geq 0}$ for which we have
    \begin{equation}\label{eq:goal_bd_testing}
    \mathfrak{T}_{R_2, p} ^{\operatorname{triple}} (\sigma, \omega) + \mathfrak{T}_{R_2, p'} ^{\operatorname{triple}} (\omega, \sigma) \lesssim A_p (\sigma, \omega) + C(\Gamma, p) \epsilon \, ,
    \end{equation}
where what follows, $C(\Gamma, p)$ denotes a constant depending on $\Gamma$ and $p$ which may change instance to instance.
    Taking $\epsilon \to 0$ and then using \eqref{eq:Ap_final}will then yield \eqref{eq:triple_testing_bounded}. We focus on $\mathfrak{T}_{R_2, p'} ^{\operatorname{triple}} (\omega, \sigma)$, since $\mathfrak{T}_{R_2, p} ^{\operatorname{triple}} (\sigma, \omega)$ will be bounded similarly. 
	
    Let $\epsilon > 0$. By Corollary \ref{remark:choosing_sequence_1_by_1} , there is a choice of sequence $\{k_j\}_{j \geq 0}$ with $k_0 = 0$ for which \eqref{eq:Ap_quad_big3} holds as well as the statement $\mathbf{U_t \left ( \{k_j\}, \epsilon, \Gamma, p \right )}$ for all $t \geq 0$. 
By \eqref{eq:disc_sigma} and \eqref{eq:disc_omega}, the functions 
\[
  \eta_{\ell} \equiv \sigma_{\ell+1} - \sigma_{\ell} \, , \qquad \delta_{\ell} \equiv \omega_{\ell+1} - \omega_{\ell} \, ,  
\]
 are of the form 
\begin{equation}\label{eq:sum_type_osc}
  \sum\limits_{ R \in \mathcal{D}_{\ell} ([0,1)) }  c_R \operatorname{Osc}_{k_{\ell+1}} ^{R, \operatorname{horizontal}} \, .
\end{equation}
By \eqref{eq:defn_sigma_omega_testing_3}, this is nonzero only if $\ell \leq m$. Thus there are at most $C(\Gamma, p)$ many nonzero coefficients $c_R$.  

We must show that for every square $Q$ we have
\begin{align}
   \frac{1}{|Q|_{\omega}} \int\limits_{3Q} \left |R_2 \omega \mathbf{1}_{Q} \right |^{p'} d \sigma &\lesssim A_{p'} (\omega, \sigma) ^{p'} + C(\Gamma, p) \epsilon\, . \label{eq:backwards_testing_small} 
\end{align}
	Given a square $Q$ in $\mathbb{R}^2$, let $t=t(Q)$ be the smallest nonnegative integer such that 
    \[
    2^{-(k_0 + k_{1}+k_{2}+...+k_{t})}\leq\ell\left(  Q\right)  \, .
    \]
    We claim that for all squares $Q \subset \mathbb{R}^2$, we have
\begin{align}\label{eq:estimate_testing_1}
&  \frac{1}{\left\vert Q\right\vert _{\omega}}\int_{3Q}\left\vert R_{2}%
\mathbf{1}_{Q}\omega\left(  x\right)  \right\vert ^{p'}\sigma\left(  x\right)
dx\lesssim \frac{1}{\left\vert Q\right\vert _{\omega}}\int_{3Q}\left\vert R_{2}%
\mathbf{1}_{Q}\left(  \omega_{t}\right)  \left(  x\right)  \right\vert
^{p'}\sigma\left(  x\right)  dx+C(\Gamma, p) \epsilon  \,  .
\end{align}
	Indeed, using that $\tilde{\sigma}$ and $\tilde{\omega}$ are dyadic step functions and are dyadically doubling on $[0,1)$, then we have 
    \begin{equation}\label{eq:infinity_norm_weights}
    \sigma\left(  x\right)  \leq\left\Vert \widetilde{\sigma} \right\Vert _{\infty} = C(\Gamma, p) \, , \qquad \omega(x)^{-1} \leq \| \tilde{\omega}^{-1}\|_{\infty} =C(\Gamma, p) \, .
    \end{equation}
    Since $m$ and the finitely many coefficients $c_R$ depend only on $\Gamma$ and $p$, then by Lemma \ref{lem:inductive_convergence}, namely \eqref{eq:strong_conv}, there is a choice of $\{k_j\}$ for which we have
\begin{align*}
\frac{1}{\left\vert Q\right\vert _{\omega}}\int_{3 Q}\left\vert R_{2}\mathbf{1}%
_{Q}\left(  \sum_{s=t}^{m-1}\delta_{s}\right)  \left(  x\right)  \right\vert
^{p'}\sigma\left(  x\right)  dx    \leq C(\Gamma,p) \frac
{1}{\left\vert Q\right\vert}\int_{3 Q}\left\vert R_{2}\mathbf{1}_{Q}\left(
\sum_{s=t}^{m}\delta_{s}\right)  \left(  x\right)  \right\vert ^{p'}dx \leq C(\Gamma,p)\epsilon  \, .
\end{align*}

We decompose the first term on the right of \eqref{eq:estimate_testing_1} further as 
	\begin{align}\label{eq:estimate_testing_2}
		\frac{1}{\left\vert Q\right\vert _{\omega}}\int\limits_{3 Q} \left | R_{2}\mathbf{1}_{Q}\omega_{t} \right |^{p'}\sigma dx &= \frac{1}{\left\vert Q\right\vert _{\omega}}\int
\limits_{3 Q} \left | R_{2}\mathbf{1}_{Q}\omega_{t} \right |^{p'}%
\sigma_{t}dx+ \frac{1}{\left\vert Q\right\vert _{\omega}}\int\limits_{3 Q} \left | R_{2}\mathbf{1}_{Q}%
\omega_{t} \right |^{p'}\left ( \sigma - \sigma_t \right) dx \, .
	\end{align}
	The last term equals
	\begin{align}\label{eq:diff_sigma}
	 \frac{1}{\left\vert Q\right\vert _{\omega}}\int\limits_{3 Q} \left | R_{2}\mathbf{1}_{Q} \omega_{t} \right |^{p'}\left ( \sigma - \sigma_t \right) dx  = \sum\limits_{\ell = t}^{m-1}  \frac{1}{\left\vert Q\right\vert _{\omega}}\int\limits_{3 Q} \left | R_{2}\mathbf{1}_{Q} \omega_{t} \right |^{p'} \eta_{\ell} dx \, .
	\end{align}
	Because there are at most $m = m(\Gamma, p)$  many functions $\eta_{\ell}$, each of which decomposes as a  sum of the form \eqref{eq:sum_type_osc} with at most $C(\Gamma, p)$ many terms, each each $c_R$ depends on $\Gamma,p$, then 
    by Corollary \ref{remark:choosing_sequence_1_by_1}, and in particular the second part of  \eqref{eq:weak_conv}, we have \eqref{eq:diff_sigma} is at most $C(\Gamma, p) \epsilon$.
	
    To estimate the first term on the right of \eqref{eq:estimate_testing_2}, given 
\[
Q = Q_1 \times Q_2 \, , 
\]
let 
$Q_1 ^* \equiv \left ( Q_1 \right)^{\ast}$ be as in Lemma \ref{lem:remodeling_loc_cst} so that on $3Q$, we have the pointwise estimates 
\begin{equation}\label{eq:weight_values_to_avg}
\sigma_{t} \approx E_{\supr (Q_1^*)} \tilde{\sigma} \, , \qquad \omega_{t} \approx E_{\supr (Q_1^*)} \tilde{\omega} \, . 
\end{equation}
Then applying the
pointwise estimate \eqref{eq:weight_values_to_avg} to $\sigma_{t}$, followed by the bound 
\[
\Vert R_{2}
\mathbf{1}_{Q} \omega_{t} \Vert_{L^{p'}} \lesssim \Vert\mathbf{1}_{Q}
\omega_{t} \Vert_{L^{p'}}
\]
which follows from $L^{p'}$ boundedness of $R_{2}$, we get
\begin{align}\label{eq:Riesz_loc_cst}
	\frac{1}{\left | Q \right |_{\omega} }\int\limits_{3 Q}\left | R_{2}\mathbf{1}_{Q}\omega_{t} \right |^{p'}%
\sigma_{t}dx \approx \left(  E_{\supr(Q^{\ast} _1)}\widetilde{\sigma}\right) \frac{1}{\left | Q \right |_{\omega} }  \int\limits_{3 Q} \left | R_{2}\mathbf{1}_{Q}\omega_{t}\right |^{p'}dx
&  \lesssim \left(  E_{\supr(Q^{\ast} _1)} \widetilde{\sigma} \right) \frac{1}{\left | Q \right |_{\omega} } \int\limits_{Q}\left | \omega_{t} \right |^{p'}dx
\end{align}
And then applying the
pointwise estimate \eqref{eq:weight_values_to_avg} to $\omega_{t}$, and then using that for all dyadic intervals $I \in \mathcal{K}_t ([0,1))$, we have $E_{\supr (I)} \tilde{\omega} = E_{I} \hat{\omega}$, we then get \eqref{eq:Riesz_loc_cst} is 
\begin{align*}
	  \approx \frac{1}{|Q|_{\omega}} \left(  E_{\supr(Q^{\ast} _1)}\widetilde{\sigma} \right)  \left ( E_{\supr(Q^{\ast} _1)}\widetilde{\omega} \right )^{p'} \approx \left(  E_{\supr(Q^{\ast} _1)}\widetilde{\sigma} \right)  \left ( E_{\supr(Q^{\ast} _1)}\widetilde{\omega} \right )^{p'-1} \frac{|Q|_{\omega_t}}{ |Q|_{\omega}}\, ,
\end{align*}
Regarding the ratio in the last term, we claim that
\begin{equation}\label{eq:ratio_bd}
\frac{|Q|_{\omega_t}}{ |Q|_{\omega}} \lesssim 1 + C(\Gamma, p) \epsilon \, , 
\end{equation}
which would then yield
\[
\frac{1}{\left | Q \right |_{\omega} }\int\limits_{3 Q}\left | R_{2}\mathbf{1}_{Q}\omega_{t} \right |^{p'}%
\sigma_{t}dx \lesssim 1 + C(\Gamma, p) \epsilon \, .
\]
To see the claim, by doubling for $\omega$ and $\omega_t$ we have 
\begin{equation}\label{eq:ratio}
	\frac{|Q|_{\omega_t}}{ |Q|_{\omega}} \approx \frac{|3Q|_{\omega_t}}{ |3Q|_{\omega}}  = 1 +  \frac{1}{ |3Q|_{\omega}} \int\limits_{3Q} \omega_t - \omega \, dx = 1 + \frac{1}{ E_{3 Q} \omega} \left ( \frac{1}{|3 Q|} \int\limits_{3Q} \omega_t - \omega \, dx \right ) \, .
\end{equation}
Using \eqref{eq:infinity_norm_weights} to bound $\left ( E_Q \omega \right )^{-1} \leq  C(\Gamma, p)$, and then noting by \eqref{eq:sum_type_osc} that  
\[
\omega_t - \omega = -\sum\limits_{\ell=t}^{m-1} \delta_{\ell} = \sum\limits_{\ell=t}^{m-1} \sum\limits_{R \in \mathcal{D}_{\ell} ([0,1))} c_R  \operatorname{Osc}_{k_{\ell+1}} ^{R, \operatorname{horizontal}} \, ,
\]
 then Lemma \ref{lem:inductive_convergence}, namely \eqref{eq:basic_weak_conv}, shows that the last term of \eqref{eq:ratio} is at most $ C(\Gamma, p)\epsilon$.  

Thus altogether we get
\[
	\sup\limits_Q \frac{1}{\left | Q \right |_{\omega} }\int\limits_{3 Q}\left | R_{2}\mathbf{1}_{Q}\omega \right |^{p'}%
	\sigma dx \lesssim \sup\limits_{Q} \left(  E_{\supr(Q^{\ast} _1)}\widetilde{\sigma} \right)  (E_{\supr(Q^{\ast} _1)}\widetilde{\omega})^{p'-1} + C(\Gamma, p) \epsilon 
    \]
    \[\leq  A_{p'} (\tilde{\omega},\tilde{\sigma})^{p'}  + C(\Gamma, p) \epsilon 
	=  A_{p'} (\omega,\sigma)^{p'} + C(\Gamma, p) \epsilon\, . 
\]
Thus \eqref{eq:goal_bd_testing} holds, completing the proof.

\appendix

\section{\texorpdfstring{$A_p$}{Ap} implies two-tailed \texorpdfstring{$\mathcal{A}_p$}{Ap} for reverse doubling measures}\label{section:rev_doub_tailed_Ap}

A measure $\mu$ on $\mathbb{R}^n$ is \emph{reverse doubling} if there exists a constant $c <1 $ such that for all cubes $Q$, we have
\[
   \left |Q\right |_{\mu} \leq c \left |2 Q\right |_{\mu} \, . 
\]
Note that every doubling measure $\mu$ is reverse doubling, but the converse is false: take $\mu$ equals for instance the Hausdorff measure $dx_1 \times \delta_0$ of $\mathbb{R} \times \{0\}$ on $\mathbb{R}^2$, or the measure $\mu = \mathbf{1}_{[0, \infty)} dx$ on $\mathbb{R}$. By adapting the proof of \cite[Theorem 4]{Gr} to the two-tailed $\mathcal{A}_p$ condition \eqref{Muck conditions}, we get the following result, whose proof we include for the convenience of the reader. 

\begin{proposition}[{\cite[Theorem 4]{Gr}}]\label{prop:reverse_doubling_tailed_Ap}
  Let $1<p<\infty$. If $\sigma$ and $\omega$ are reverse doubling measures on $\mathbb{R}^n$, then 
  \[
    \mathcal{A}_{p} (\sigma, \omega) \lesssim A_p (\sigma, \omega) \, .
  \]
\end{proposition}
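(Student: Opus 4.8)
The plan is to mimic the proof of Proposition~\ref{prop:small_doubling_tailed_Ap} but replace the quantitative smallness of the doubling constant by the qualitative decay that reverse doubling provides. As in that proof, it suffices to show that for a reverse doubling measure $\mu$ and every cube $Q$ one has
\[
  \frac{1}{|Q|}\int_{\mathbb{R}^n} \frac{|Q|^p}{\left[\ell(Q)+\operatorname{dist}(x,Q)\right]^{np}}\,d\mu(x) \lesssim E_Q\mu \, ,
\]
and then apply this bound to $\sigma$ (with exponent $p'$) and to $\omega$ (with exponent $p$), multiply, and take the supremum over $Q$ to conclude $\mathcal{A}_p(\sigma,\omega)\lesssim A_p(\sigma,\omega)$. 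The reduction is identical to the one already carried out, so the only real content is the displayed pointwise (in $Q$) estimate.

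To prove that estimate, I would perform the dyadic annular decomposition $\mathbb{R}^n = 2Q \cup \bigcup_{\ell\ge 1}\left(2^{\ell+1}Q\setminus 2^\ell Q\right)$, exactly as in Proposition~\ref{prop:small_doubling_tailed_Ap}, obtaining
\[
  \frac{1}{|Q|}\int \frac{|Q|^p}{\left[\ell(Q)+\operatorname{dist}(x,Q)\right]^{np}}\,d\mu(x) \approx E_Q\mu + \frac{1}{|Q|}\sum_{\ell=1}^\infty 2^{-\ell np}\left|2^{\ell+1}Q\right|_\mu \, .
\]
The point is now to control $\left|2^{\ell+1}Q\right|_\mu$. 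Here, instead of invoking a doubling bound of the form $|2I|_\mu \le 2^{n(1+\epsilon)}|I|_\mu$, I use reverse doubling \emph{backwards}: from $|I|_\mu \le c\,|2I|_\mu$ with $c<1$ we get $|I|_\mu \le c^{\ell+1}|2^{\ell+1}I|_\mu$ for the chain starting at $Q$; but this bounds $|Q|_\mu$ from above, which is the wrong direction. Reverse doubling by itself does \emph{not} give an upper bound for $|2^{\ell+1}Q|_\mu$ in terms of $|Q|_\mu$ (a single point mass is reverse doubling yet $|2^{\ell+1}Q|_\mu$ can grow), so I expect the main obstacle is that the statement as literally written cannot be true for every reverse doubling $\mu$ unless the $A_p$ hypothesis is used to couple $\sigma$ and $\omega$.

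The resolution, and the step I anticipate being the crux, is \textbf{not} to estimate each tail factor separately but to estimate the \emph{product} of the two tails directly against $A_p(\sigma,\omega)$, using the trick from \cite[Theorem~4]{Gr}: write the product of annular sums as a double sum over $\ell,\ell'$, and in the diagonal-dominant region bound $\left(E_{2^{\ell+1}Q}\sigma\right)^{1/p'}\left(E_{2^{\ell+1}Q}\omega\right)^{1/p}\le A_p(\sigma,\omega)$ on each common dilate, while the geometric weights $2^{-\ell np}2^{-\ell' np'}$ together with the reverse-doubling lower bounds $|2^\ell Q|_\mu \ge c^{-\ell}|Q|_\mu$ — used now in the numerically favorable direction to show $|2^\ell Q|/|2^\ell Q|_\mu$ and its dual behave summably — force convergence. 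Concretely, one splits $\frac{|2^\ell Q|_\mu}{|2^\ell Q|} = E_{2^\ell Q}\mu$ and pairs the $\sigma$- and $\omega$-dilates at the same scale, so the surviving sum is $\sum_\ell 2^{-\ell n p}2^{-\ell n p'} \cdot (\text{something bounded by }A_p)$ after using $\ell(2^\ell Q)^{-np}|2^\ell Q| = 2^{-\ell n(p-1)}\ell(Q)^{-np}|Q|$, which converges since $p>1$. I would carry this out by first bounding $\mathcal{A}_p(\sigma,\omega)^p$ as the product of the two annular expressions, then interchanging sums and at each scale $2^\ell Q$ extracting the factor $\left(E_{2^\ell Q}\sigma\right)^{p/p'}\left(E_{2^\ell Q}\omega\right) \le A_p(\sigma,\omega)^p$ (after a Hölder-type pairing of the mismatched scales via reverse doubling to replace $E_{2^{\ell'}Q}$ by a comparable $E_{2^\ell Q}$ up to the summable geometric loss $c^{|\ell-\ell'|}$), leaving a convergent numerical series. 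This is routine bookkeeping once the pairing is set up, so I would not grind through it here; the only delicate point is making sure the reverse-doubling losses are charged to the decaying side of each geometric factor.
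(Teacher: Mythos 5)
Your proposal is correct and follows essentially the same route as the paper's proof (both adapt \cite[Theorem 4]{Gr}): annular decomposition of each tail, expansion of the product into a double sum over scales, pairing $\sigma$ and $\omega$ at the common \emph{larger} dilate to extract $A_p(\sigma,\omega)$, with reverse doubling supplying the geometric decay that makes the off-diagonal sums converge. The only caveat is that your stated mechanism of replacing $E_{2^{\ell'}Q}$ by $E_{2^{\ell}Q}$ ``up to the summable loss $c^{|\ell-\ell'|}$'' is not literally true as written (a volume ratio $2^{n|\ell-\ell'|}$ also enters); as in the paper, one instead bounds the \emph{mass} of the measure sitting at the smaller dilate by $c^{|\ell-\ell'|}$ times its mass at the larger dilate, and lets the kernel factor $2^{-\ell' n}$ attached to the smaller scale absorb the remaining geometry, which is exactly your closing remark about charging the losses to the decaying factors.
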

\begin{proof}
	For a cube $Q$, we write
	\begin{align*}
		&\frac{1}{|Q|} \int \frac{|Q|^p}{\left [\ell(Q)+ \operatorname{dist}(x,Q) \right ]^{np}} d \omega (x) \\
		= &\frac{1}{|Q|} \int\limits_{2 Q} \frac{|Q|^p}{\left [\ell(Q)+ \operatorname{dist}(x,Q) \right ]^{np}} d \omega (x) + \frac{1}{|Q|} \sum\limits_{\ell=1}^{\infty} \int\limits_{2^{\ell+1} Q \setminus 2^{\ell} Q} \frac{|Q|^p}{\left [\ell(Q)+ \operatorname{dist}(x,Q) \right ]^{np}} d \omega (x) \\ 
		\lesssim &E_{2 Q} \omega  + \frac{1}{|Q|} \sum\limits_{\ell=1}^{\infty} 2^{-\ell n p} \left | 2^{\ell+1} Q \right |_{\omega} \, .
	\end{align*}
      Therefore, using that $(\sum_k a_k)^{1/p}\leq \sum_k a_k^{1/p}$ twice, we get
	\[
		\left( \frac{1}{|Q|} \int \frac{|Q|^p}{[\ell(Q)+\operatorname{dist}(x,Q)]^{np}} d\omega(x)\right)^{1/p} \lesssim (E_{2 Q} \omega)^{1/p} + \frac{1}{|Q|^{1/p}}\sum_{\ell=1}^\infty 2^{-\ell n}|2^{\ell+1}Q|_\omega^{1/p}.
	\]
		And with obvious changes for $\sigma$ and $p'$ we obtain
	\begin{align*}
		&\left( \frac{1}{|Q|} \int \frac{|Q|^p}{[\ell(Q)+\operatorname{dist}(x,Q)]^{np}} d\omega(x)\right)^{1/p} \left( \frac{1}{|Q|} \int \frac{|Q|^{p'}}{[\ell(Q)+\operatorname{dist}(x,Q)]^{np'}} d\sigma(x)\right)^{1/p'}\\
		\lesssim & (E_{2 Q} \omega)^{1/p}(E_{2 Q} \sigma)^{1/p'} + \sum_{m=1}^\infty (E_{2 Q} \omega)^{1/p} \frac{2^{-mn}|2^{m+1}Q|_\sigma^{1/p'}}{|Q|^{1/p'}}+ \sum_{\ell=1}^\infty (E_{ 2 Q}\sigma)^{1/p'} \frac{2^{-\ell n}|2^{\ell+1}Q|_\omega^{1/p}}{|Q|^{1/p}}\\
		\quad  & + \sum_{\ell=1}^\infty  \sum_{m=1}^\infty  \frac{2^{-mn}|2^{m+1}Q|_\sigma^{1/p'}}{|Q|^{1/p'}} \frac{2^{-\ell n}|2^{\ell+1}Q|_\omega^{1/p}}{|Q|^{1/p}}\\
		=: & A+B+C+D.
	\end{align*}
	We first handle $D$, the most difficult term.
	
	First, we break up the double sum into the regimes where $m\geq \ell$ and $\ell\geq m$, i.e. 
	\[
	D=\sum_{\ell=1}^\infty\sum_{m=1}^\infty [\ldots] \leq  \sum_{\ell=1}^\infty \sum_{m=1}^\ell [\ldots]+ \sum_{m=1}^\infty \sum_{\ell=1}^m[\ldots]=:I+II.
	\]

	By reverse doubling, there exist $\delta_\sigma, \delta_\omega>0$ such that 
	\[
	|2Q|_\sigma \geq (1+\delta_\sigma)|Q|_\sigma, \qquad |2Q|_\omega \geq (1+\delta_\omega)|Q|_\omega,
	\]
	In particular, for $\ell\geq m$, we have $|2^{m+1}Q|_\omega \leq (1+\delta_\omega)^{-(\ell-m)}|2^{\ell+1}Q|_\omega$. Moreover $|2^{\ell+1}Q|=2^{(\ell+1)n}|Q|$, and so 
	\begin{align*}
	I & \leq \sum_{\ell=1}^\infty \sum_{m=1}^\ell (1+\delta_\omega)^{-(\ell-m)/p}2^{-mn+n}\dfrac{|2^{\ell+1}Q|_\sigma^{1/p'}|2^{\ell+1}Q|_\omega^{1/p}}{|2^{\ell+1}Q|}\\
	& \lesssim A_p(\sigma,\omega) \sum_{\ell=1}^\infty(1+\delta_\omega)^{-\ell/p} \sum_{m=1}^\ell\left( \frac{(1+\delta_\omega)^{1/p}}{2^n}\right)^m.
	\end{align*}
	Since the inner sum is a geometric series in $m$, it's controlled by either its first or last term, and in either case we get a convergent series in $\ell$ on the outside. 
	
	Term $II$ is handled the same, using now the reverse doubling for $\sigma$ instead.
	
	Handling $B$ and $C$ is easier, and we only need to do one of them by symmetry, so we do $B$: again by reverse doubling 
	\[
	|Q|_\omega \leq (1+\delta_\omega)^{-m}|2^{m+1}Q|_\omega,
	\]
	and therefore 
	\[
	E_Q\omega \leq (1+\delta_\omega)^{-m} \frac{|2^{m+1}Q|_\omega}{|Q|}.
	\]
	We conclude 
	\[
	B= \sum_{m=1}^\infty (E_Q\omega)^{1/p} \frac{2^{-mn}|2^{m+1}Q|_\sigma^{1/p'}}{|Q|^{1/p'}} \lesssim \sum_{m=1}^\infty (1+\delta_\omega)^{-m/p} \frac{|2^{m+1}Q|_\omega^{1/p}|2^{m+1}Q|_\sigma^{1/p'}}{|2^{m+1}Q|} \lesssim A_p(\sigma, \omega).
	\]
And clearly $A \lesssim A_p(\sigma, \omega)$
\end{proof}

\section{The \texorpdfstring{$A_p$}{Ap} condition does not imply the two-weight norm inequality for the Maximal function for doubling measures}\label{section:counter_example_maximal_fn}

Consider the uncentered Lebesgue Hardy-Littlewood maximal function on a collection of cubes $\mathcal{S}$, 
\[
  \mathcal{M}^{\mathcal{S}} f (x) \equiv \sup\limits_{J \in \mathcal{S} ~:~ x \in J} \frac{1}{\left | J \right |} \int\limits_{J} \left | f(y) \right | d y  
\]
Given a measure $\mu$, we also define
\[
\mathcal{M}_{\mu} ^{\mathcal{S}} f (x) \equiv \sup\limits_{J \in \mathcal{S} ~:~ x \in J} \frac{1}{\left | J \right |} \int\limits_{J} \left | f(y) \right | d \mu (y) \, . 
\]
If $\mathcal{S}$ is not specified, then we take $\mathcal{S}$ to be the set of all cubes.

Recall that if $w \in A_p$, then 
\[
\mathcal{M}_{} : L^p (w) \to L^p (w) \, .
\]
 Or equivalently,
 \[
 \mathcal{M}_{\sigma} :L^p (\sigma) \to L^p(\omega)
 \]
 if $A_p (\sigma, \omega) < \infty$, where 
 \[
 d \sigma (x) \equiv w (x) ^{-\frac{1}{1-p}} dx \, , \qquad d \omega (x) \equiv w (x) dx \, .
 \]
 However, this theorem fails  for general weight pairs $(\sigma, \omega)$, i.e., there are weight pairs $\left (\sigma, \omega \right )$, not both doubling, for which 
 \[
 A_p (\sigma, \omega) < \infty \, , \qquad \mathfrak{N}_{\mathcal{M},p} \left (\sigma, \omega \right ) = + \infty \, ; 
 \]
 see \cite{Muc}. In \cite{Saw1}, one of the first two-weight $T1$ theorems was also shown, namely
\[
\mathfrak{N}_{\mathcal{M},p} \left (\sigma, \omega \right ) \approx \mathfrak{T}_{\mathcal{M},p} ^{\operatorname{local}} \left (\sigma, \omega \right ) \, .
\]
To this day, this result, and its slight improvement in \cite{HyLiSa}, is the only known \emph{characterization} of the two-weight norm inequality for $\mathcal{M}$ and general weight pairs $(\sigma, \omega)$ in terms of classical quantities.

However, what if we restrict ourselves to pairs $\left (\sigma, \omega \right )$ of doubling measures? Is there a characterization of the norm inequality in terms of $A_p (\sigma, \omega)$ alone? The tools developed in \cite{KaTr} provide a nearly immediate negative answer to this question, even for absolutely continuous doubling measures.

\begin{proposition}
  Let $1 < p < \infty$, let $\tau \in \left (0 , \frac{1}{2} \right )$ and let $\Gamma > 1$. There exist $\tau$-doubling measures $\hat{\sigma}$ and $\hat{\omega}$ on $\mathbb{R}$ such that $A_p \left ( \sigma, \omega \right) \lesssim 1$ and yet
  \[
  \mathfrak{N}_{\mathcal{M} ^{\mathcal{D} \left ( [0,1) \right )},p} \left (\sigma, \omega \right ) > \Gamma \, .  \]
\end{proposition}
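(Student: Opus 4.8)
The plan is to reuse, almost verbatim, the one-dimensional machinery already developed for Theorem \ref{thm:counter_example}, replacing the Riesz transform $R_2$ by the dyadic maximal function $\mathcal{M}^{\mathcal{D}([0,1))}$, which is far easier to handle. First I would recall, as observed in \cite{KaTr} (and implicit in Section \ref{section:Eric_example_weights}), that for the dyadic maximal function there is a \emph{scalar} lower bound by the dyadic quadratic Muckenhoupt characteristic: testing $\mathcal{M}_\sigma^{\mathcal{D}([0,1))}$ against sums of indicators $\sum_k a_k \mathbf{1}_{I_k}$ of nested dyadic intervals and using that $\mathcal{M}_\sigma^{\mathcal{D}([0,1))}(a_k \mathbf{1}_{I_k}) \geq |a_k| E_{I_k}\sigma$ on $I_k$, together with the scalar-to-quadratic linearization trick (a sum of scalar norm inequalities dominates the $\ell^2$-valued one when $p\geq 2$, and by duality Lemma \ref{lem:duality_quadratic} for $1<p<2$), one gets
\[
  \mathfrak{N}_{\mathcal{M}^{\mathcal{D}([0,1))},p}(\sigma,\omega) \gtrsim A_p^{\ell^2,\operatorname{local},\mathcal{D}([0,1))}(\sigma,\omega)\, .
\]
Actually even more directly: feeding the single near-extremizing test configuration $\{I_k = [0,2^{-k}), a_k = k^\eta\}$ from the proof of Proposition \ref{prop:start_ref_weights} into the norm inequality for $\mathcal{M}_\sigma^{\mathcal{D}([0,1))}$ already exhibits a large operator norm, since on $I_k$ one has $\mathcal{M}_\sigma^{\mathcal{D}([0,1))}\big(\sum_j a_j\mathbf{1}_{I_j}\big) \geq \mathcal{M}_\sigma^{\mathcal{D}([0,1))}(a_k\mathbf{1}_{I_k}) \geq a_k E_{I_k}\sigma$.

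Next I would invoke Proposition \ref{prop:start_ref_weights} to obtain positive $\mathcal{D}$-dyadic step functions $\sigma,\omega$ on $[0,1)$ with $A_p^{\mathcal{D}([0,1))}(\sigma,\omega)\lesssim 1$ but $A_p^{\ell^2,\operatorname{local},\mathcal{D}([0,1))}(\sigma,\omega)>\Gamma'$ for a suitably large $\Gamma'$ (chosen so that the implicit constant in the lower bound above still leaves $\mathfrak{N}>\Gamma$), and then apply the Kakaroumpas--Treil ``small step'' construction of Section \ref{section:KaTr}: by Lemma \ref{lem:large_doubling_to_small} the disarranged weights $\tilde\sigma,\tilde\omega$ are $\tau$-doubling on $\mathcal{D}([0,1))$, by Lemma \ref{lem:Ap_preserved_KaTr} they still satisfy $A_p^{\mathcal{D}([0,1))}(\tilde\sigma,\tilde\omega)\lesssim 1$, and by Proposition \ref{lem:quadratic_Ap_swells_KaTr} the quadratic characteristic only grows, so $A_p^{\ell^2,\operatorname{local},\mathcal{D}([0,1))}(\tilde\sigma,\tilde\omega)>\Gamma'$. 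Since $\tilde\sigma,\tilde\omega$ are $\tau$-doubling on $\mathcal{D}([0,1))$, the scalar $A_p^{\mathcal{D}([0,1))}$ characteristic is comparable to the genuine (non-dyadic, localized) scalar $A_p$ characteristic by the usual doubling argument, so $A_p(\tilde\sigma,\tilde\omega)\lesssim 1$ in the sense relevant to the statement. Setting $\hat\sigma\equiv\tilde\sigma$, $\hat\omega\equiv\tilde\omega$, and $\sigma,\omega$ their obvious reinterpretation as measures, we conclude
\[
  \mathfrak{N}_{\mathcal{M}^{\mathcal{D}([0,1))},p}(\sigma,\omega) \gtrsim A_p^{\ell^2,\operatorname{local},\mathcal{D}([0,1))}(\hat\sigma,\hat\omega) > \Gamma'\gtrsim\Gamma\, ,
\]
which is the assertion. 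Note that no remodeling (Section \ref{section:transplantion}) or tensoring to $\mathbb{R}^2$ is needed here, since we only need dyadic doubling on $[0,1)$ and a dyadic maximal operator on $[0,1)$ — the continuous-doubling refinements were needed in Theorem \ref{thm:counter_example} only because the Riesz transform is not a dyadic operator.

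The main obstacle — really a bookkeeping point rather than a genuine difficulty — is establishing cleanly the lower bound $\mathfrak{N}_{\mathcal{M}^{\mathcal{D}([0,1))},p}(\sigma,\omega)\gtrsim A_p^{\ell^2,\operatorname{local},\mathcal{D}([0,1))}(\sigma,\omega)$ with an implicit constant that does not degrade, for \emph{all} $1<p<\infty$; for $p\geq 2$ this is the elementary scalar-dominates-quadratic estimate, but for $1<p<2$ one should either argue by the duality Lemma \ref{lem:duality_quadratic} (checking that the dyadic maximal function on $[0,1)$ enjoys the requisite vector-valued Fefferman--Stein bounds, which follows from Lemma \ref{lem:FS}(\ref{lem:FS_dyadic}) together with $M_\mu^{\mathcal{D}([0,1))}\leq M_\mu^{\mathcal{D}}$), or one can bypass quadratic characteristics entirely and run the single-configuration argument directly: test against $\sum_{k=1}^M k^\eta \mathbf{1}_{[0,2^{-k})}$ with $\eta$ as in the proof of Proposition \ref{prop:start_ref_weights}, which by \eqref{eq:Ap_quad_demo_finite} already gives $\mathfrak{N}_{\mathcal{M}^{\mathcal{D}([0,1))},p}(\sigma,\omega)>\Gamma'$ after the martingale truncation, since $\mathcal{M}^{\mathcal{D}([0,1))}_\sigma$ of this function pointwise dominates $\big(\sum_k k^{2\eta}(E_{I_k}\sigma)^2\mathbf{1}_{I_k}\big)^{1/2}$ up to an absolute constant. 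The latter route is shortest and I would take it, then track the Kakaroumpas--Treil invariances exactly as above.
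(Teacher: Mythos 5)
There are two genuine gaps. First, your mechanism for exhibiting the large operator norm does not close. The inequality $\mathfrak{N}_{\mathcal{M}^{\mathcal{D}([0,1))},p}(\sigma,\omega)\gtrsim A_p^{\ell^2,\operatorname{local},\mathcal{D}([0,1))}(\sigma,\omega)$ that your whole architecture rests on is never established: for a \emph{sublinear} operator the scalar bound does not self-improve to an $\ell^2$-valued bound, your ``sum of scalar inequalities'' trick for $p\geq 2$ produces $\bigl(\sum_J a_J^2|J|_\sigma^{2/p}\bigr)^{1/2}$ on the right, which \emph{dominates} (rather than is dominated by) $\bigl\|\bigl(\sum_J a_J^2\mathbf{1}_J\bigr)^{1/2}\bigr\|_{L^p(\sigma)}$, and Lemma \ref{lem:duality_quadratic} only dualizes the quadratic characteristic, not the norm of the (nonlinear, non-self-adjoint) maximal operator. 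Your ``shortest route'' has the same defect in concrete form: testing on $f=\sum_k k^\eta\mathbf{1}_{[0,2^{-k})}$, the pointwise domination of the square function controls the numerator, but the denominator in the operator-norm ratio is $\|f\|_{L^p(\sigma)}$, an $\ell^1$-type sum over the nested intervals, which exceeds the square-function norm in \eqref{eq:Ap_quad_demo_finite} by an unbounded factor; so \eqref{eq:Ap_quad_demo_finite} does not ``already give'' $\mathfrak{N}>\Gamma'$. The paper avoids this entirely: it starts from weights already known to fail the two-weight maximal inequality (for $p=2$ from \cite{Muc}, for general $p$ from the computations in \cite[p.49--50]{SaWi} --- in effect the \emph{testing} condition on the intervals $[0,2^{-N})$ fails, a direct computation), and then shows the Kakaroumpas--Treil small-step rearrangement can only \emph{increase} $\mathfrak{N}_{\mathcal{M}^{\mathcal{D}([0,1))},p}$, using the pointwise inequality of \cite[Remark 4.3]{KaTr}, $\mathcal{M}^{\mathcal{D}([0,1))}\bigl(\tilde\sigma\,(f\circ\Phi)\bigr)\geq\bigl(\mathcal{M}^{\mathcal{D}([0,1))}(\sigma f)\bigr)\circ\Phi$, together with the measure-preserving property of $\Phi$; no quadratic characteristic is needed anywhere.

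Second, your assertion that ``no remodeling or tensoring is needed since we only need dyadic doubling on $[0,1)$'' contradicts the statement being proved: the proposition asks for $\tau$-doubling measures \emph{on $\mathbb{R}$}, i.e.\ with doubling ratio at most $1+\tau$ over \emph{all} intervals (the paper's convention when $\mathcal{S}$ is unspecified). Dyadic $\tau$-doubling on $\mathcal{D}([0,1))$ does not give this --- the ratio of masses of the two halves of an interval straddling a dyadic point such as $\tfrac12$ can degrade geometrically --- nor does it produce measures defined on all of $\mathbb{R}$. This is precisely why the paper's proof has a further step after the small-step construction: it applies the \emph{iterated remodeling} of \cite[Section 5]{KaTr} to obtain genuinely $\tau$-doubling weights on $\mathbb{R}$ with $A_p\lesssim 1$, and uses \cite[Remark 6.4]{KaTr} (again via a measure-preserving map) to see that the dyadic maximal operator norm does not decrease. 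Your proposal omits this step, so even granting a corrected lower bound on the norm, it proves a weaker statement (dyadically doubling weights on $[0,1)$) than the proposition.
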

\begin{proof}[Sketch of proof]
  Begin with weights $\sigma$ and $\omega$ on $[0,1)$ such that
  \[
  \mathfrak{N}_{\mathcal{M} ^{\mathcal{D} \left ( [0,1) \right )},p} \left (\sigma, \omega \right ) > \Gamma \, ,  \]
	and whose doubling ratio on $\mathcal{D} \left ( [0,1) \right )$ at most $C(\Gamma)$: when $p=2$ one may approximate the weights in \cite[p.218]{Muc} by their martingale approximations with dyadic step size sufficiently small; for $p \neq 2$ one may consider the analogues of the $p=2$ case as used in Section \ref{section:Eric_example_weights}, as they are analogues of the $p=2$ weights used in \cite[p.218]{Muc} and similar computations as there show the two-weight inequality for the maximal function fails \cite[Section 7.1, p.72-73]{SaWi}. Use the small-step construction of \cite{KaTr} to get $\tilde{\sigma}$ and $\tilde{\omega}$ which are $\tau$-doubling on $\mathcal{D} \left ( [0,1) \right )$ and satisfy $A_p ^{\mathcal{D} \left ( [0,1) \right )} \left ( \sigma, \omega \right)  \lesssim 1$.

  We now claim that
  \begin{align} \label{eq:KaTr_maximal_function_inc}
\mathfrak{N}_{\mathcal{M}^{\mathcal{D} \left ( [0,1) \right )},p} \left (\tilde{\sigma}, \tilde{\omega} \right ) \geq \mathfrak{N}_{\mathcal{M}^{\mathcal{D} \left ( [0,1) \right )},p} \left (\sigma, \omega \right ) \, ,  
\end{align}
 which will then immediately imply
\[
\mathfrak{N}_{\mathcal{M}^{\mathcal{D} \left ( [0,1) \right )},p} \left (\tilde{\sigma}, \tilde{\omega} \right ) > \Gamma \, .  \]
To see the claim, let $\Phi$ be the measure-preserving transformation arising from the small-step construction. Then by \cite[Remark 4.3]{KaTr}, we have for any function $f$ supported on $[0,1)$ that
\[
	\int \left | \mathcal{M}^{\mathcal{D} \left ( [0,1) \right )} \tilde{\sigma} f \circ \Phi  \right |^p d \tilde{\omega} \geq  \int \left | \mathcal{M}^{\mathcal{D} \left ( [0,1) \right )} \sigma f   \right |^p \circ \Phi d \tilde{\omega} = \int \left (\left | \mathcal{M}^{\mathcal{D} \left ( [0,1) \right )} \sigma f   \right |^p \circ \Phi \right ) \left ( \omega \circ \Phi \right ) dx =  \int \left | \mathcal{M}^{\mathcal{D} \left ( [0,1) \right )} \sigma f   \right |^p \omega dx \, ,
\]
where in the last equality we used the fact that $\Phi$ is measure-preserving. Hence
\[
  \left \| \mathcal{M}^{\mathcal{D} \left ( [0,1) \right )} \tilde{\sigma} f \circ \Phi  \right \|_{L^p \tilde{\omega}} \geq \left \| \mathcal{M}^{\mathcal{D} \left ( [0,1) \right )} \sigma f   \right \|_{L^p \omega} \, ,
\]
and by similar reasoning
\[
  \left \| f \circ \Phi  \right \|_{L^p \left ( \tilde{\sigma} \right )} = \left \| f   \right \|_{L^p \left (\sigma \right )} \, , 
\]
and hence \eqref{eq:KaTr_maximal_function_inc} holds.

We will now remodel $\tilde{\sigma}$ and $\tilde{\omega}$ to get doubling weights $\dddot{\sigma}$ and $\dddot{\omega}$ such $A_p (\dddot{\sigma}, \dddot{\omega}) \lesssim 1$ and yet
\[
\mathfrak{N}_{\mathcal{M}^{\mathcal{D} \left ( [0,1) \right )},p} \left (\dddot{\sigma}, \dddot{\omega} \right ) > \Gamma \, .
\]
One is tempted to apply Nazarov's remodeling argument as it is outlined in this current paper and set $\dddot{\sigma} \equiv \hat{\sigma}$ and $\dddot{\omega} \equiv \hat{\omega}$. And that would probably work. However showing the required estimates would be tedious as compared to the \emph{iterated remodeling} of \cite[Section 5]{KaTr}, which yields a much simpler argument. We will not go through the full details of the iterated remodeling of Kakaroumpas-Treil, but rather we merely mention that the iterated remodeling argument of \cite{KaTr} is very similar to Nazarov's remodeling, except that Kakaroumpas-Treil take special care to iterate their construction on the transition cubes of Nazarov. The reason for the iteration is that Kakaroumpas-Treil are then able to obtain a measure-preserving map $\Psi:[0,1) \to [0,1)$ such that, given $\tau$-doubling weights $\tilde{\omega}$ and $\tilde{\sigma}$ on $[0,1)$ satisfying $A_p (\tilde{\sigma}, \tilde{\omega})\lesssim 1$, if one defines 
\[
  \dddot{\sigma} \equiv \tilde{\sigma} \circ \Psi \, , \qquad \dddot{\omega} \equiv \tilde{\omega} \circ \Psi \, ,
\]
then one can then extend $\dddot{\sigma}$ and $\dddot{\omega}$ to $\mathbb{R}$ so that both are $\tau$-doubling on $\mathbb{R}$ and satisfy $A_p (\dddot{\sigma}, \dddot{\omega}) \lesssim 1$. Then to estimate the norm inequality, we use \cite[Remark 6.4]{KaTr} to get for all $f$ supported on $[0,1)$, and for a.e.\ $x\in [0,1)$, that
\[
  \mathcal{M} \left ( \dddot{\sigma} f \circ \Psi \right ) (x) = \mathcal{M} \left ( \tilde{\sigma} f  \right ) \circ \Psi (x) \, . 
\]
And so as before we get
\begin{align}
  \mathfrak{N}_{\mathcal{M}^{\mathcal{D} \left ( [0,1) \right )},p} \left (\dddot{\sigma}, \dddot{\omega} \right ) \geq \mathfrak{N}_{\mathcal{M}^{\mathcal{D} \left ( [0,1) \right )},p} \left (\tilde{\sigma}, \tilde{\omega} \right ) \, , 
\end{align}
and hence
\[
  \mathfrak{N}_{\mathcal{M}^{\mathcal{D} \left ( [0,1) \right )},p} \left (\dddot{\sigma}, \dddot{\omega} \right ) > \Gamma \, . 
\]
\end{proof}

\end{document}